\newcommand{\comment}[1]{}
\newcommand{\Ps}{\mathbf{P}} 
\newcommand{\Q}{\mathbf{Q}}
\newcommand{\Z}{\mathbf{Z}}
\newcommand{\F}{\mathbf{F}}
\newcommand{\m}{\mathfrak{m}}
\newcommand{\pa}{\mathfrak{p}}
\newcommand{\qa}{\mathfrak{q}}
\newcommand{\sep}{\mathrm{sep}}
\DeclareMathOperator{\Spec}{Spec}
\newcommand{\limplies}{\Longleftarrow}
\theoremstyle{plain}
\newtheorem{theorem}{Theorem}[section]
\newtheorem{corollary}[theorem]{Corollary}
\newtheorem{lemma}[theorem]{Lemma}
\newtheorem{proposition}[theorem]{Proposition}
\theoremstyle{definition}
\newtheorem{definition}[theorem]{Definition}
\newtheorem{remark}[theorem]{Remark}
\begin{document}

\title[Futile algebras]{Algebras with only finitely many subalgebras}
\author{Michiel Kosters}
\address{Mathematisch Instituut
P.O. Box 9512
2300 RA Leiden
The Netherlands}
\email{mkosters@math.leidenuniv.nl}
\urladdr{www.math.leidenuniv.nl/~mkosters}
\date{\today}
\keywords{}
\subjclass[2010]{}
\thanks{I would like to thank Hendrik Lenstra for his help.}

\begin{abstract}
Let $R$ be a commutative ring. A not necessarily commutative $R$-algebra $A$ is called futile if it has only finitely many
$R$-subalgebras. In this article we relate the notion of futility to familiar properties of
rings and modules. We do this by first reducing to the case where $A$ is commutative. Then we refine the description of
commutative futile algebras from Dobbs, Picavet and Picavet-L'hermite.
\end{abstract}

\maketitle
\tableofcontents

\section{Introduction}

In the whole article, let $R$ be a commutative ring. If $R$ is a domain, we denote by $Q(R)$ its quotient field. For an $R$-module $M$, where $R$ is a domain, we denote $M_{R\textrm{-tor}}=\{m \in M: \exists r \in R \setminus \{0\}, rm=0\}$. 

An $R$-algebra is by definition a not necessarily commutative
ring $A$ together with a ring homomorphism $\varphi: R \to A$ such that the image of $\varphi$ is contained in the center $Z(A)$ of $A$.  By abuse of
notation we will often write $R$ instead of $\varphi(R)$. For example $A/R$ means $A/\varphi(R)$. 
We will reserve the word ring for a commutative ring.

\begin{definition}
 An $R$-algebra $A$ is called $R$-futile if it has only finitely many $R$-subalgebras. We sometimes just say that $A$ is futile if it is clear to
which $R$ we refer.
\end{definition}

Given $R$, we want to describe the futile $R$-algebras in terms of familiar intrinsic properties of rings and modules.
We first
reduce to the case where our algebras are commutative. The proof of the following two theorems can be found in
Section \ref{307}. For an $R$-algebra
$A$ we define the commutator ideal to be the two-sided ideal $[A,A] \subset A$ generated by $[a,b]=ab-ba$ for $a,b \in A$. Notice that $A/[A,A]$ is
commutative.

\begin{theorem} \label{130}
 An $R$-algebra $A$ is $R$-futile if and only if $A/[A,A]$ is a futile $R$-algebra and $[A,A]$ is finite.
\end{theorem}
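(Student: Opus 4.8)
Write $I = [A,A]$ and let $\pi\colon A \to A/I$ be the projection, so $A/I$ is commutative. The plan is to treat the statement about $A/I$ as the easy half in both directions: the assignment $B' \mapsto \pi^{-1}(B')$ is an inclusion-preserving bijection between the $R$-subalgebras of $A/I$ and the $R$-subalgebras of $A$ containing $I$. Hence $A$ futile immediately forces $A/I$ futile, and conversely futility of $A/I$ controls the ``top layer'' of subalgebras of $A$. The genuine content is therefore (a) $A$ futile $\Rightarrow I$ finite, and (b) $\bigl(A/I$ futile and $I$ finite$\bigr) \Rightarrow A$ futile.

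For (b) I would send each $R$-subalgebra $B \subseteq A$ to the pair $(\pi(B),\, B \cap I)$. The first coordinate ranges over the finitely many $R$-subalgebras of $A/I$, and the second over the finitely many additive subgroups of the finite set $I$, so the image is finite and it remains to bound each fibre. Fixing $C = \pi(B)$ and $J = B \cap I$, every such $B$ lies in $D := \pi^{-1}(C)$, contains a preimage of $R$, and $B/J$ is a complement of the finite group $I/J$ inside $D/J$, identified with $C$ via $\pi$. Two such complements differ by a map $d\colon C \to I/J$ which, since both are subalgebras, is a derivation up to a correction term valued in the finite set $(I/J)\cdot(I/J)$. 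First I would record the lemma that a futile $R$-algebra is finitely generated as an $R$-algebra: the finitely many subalgebras include the ones generated by finite subsets, whose directed union is the whole algebra, so the largest such subalgebra is everything. Applying this to $C$ (a subalgebra of the futile $A/I$, hence itself futile), any such $d$ is determined by its values on finitely many algebra generators, each lying in the finite group $I/J$; thus the fibre is finite and (b) follows.

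For (a) I would argue the contrapositive: if $I$ is infinite, produce infinitely many $R$-subalgebras of $A$. There are two mechanisms. First, for any $R$-submodule $J$ of $I$ the set $J + I^2$ is a non-unital subring, so $\varphi(R) + J + I^2$ is an $R$-subalgebra, and distinct $R$-submodules of $I/I^2$ yield, up to finite ambiguity, distinct subalgebras; this settles the case where $I/I^2$ has infinitely many submodules. Second, exactly as for the upper-triangular $2 \times 2$ matrices over an infinite field, when $I$ admits a nonzero space of derivation-type deformations the complements of $I$ inside $\varphi(R) + I$ form an infinite family of subalgebras; this covers the case where $I/I^2$ is an infinite simple module and so has few submodules.

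The hard part will be precisely part (a): merging these two mechanisms into one argument that, for an \emph{arbitrary} infinite commutator ideal $I$, always exhibits infinitely many $R$-subalgebras. I expect to reduce to the subalgebra $S = \varphi(R) + I$, filter $I$ by the powers $I \supseteq I^2 \supseteq \cdots$, and show that infinitude of $I$ forces either an infinite chain of $R$-submodules in some layer $I^n/I^{n+1}$ (first mechanism) or an infinite family of complement subalgebras in a single square-zero layer (second mechanism). The bookkeeping that guarantees the subalgebras so produced are genuinely distinct, together with the interaction between the $R$-module and multiplicative structures on $I$, is where the real obstacle lies.
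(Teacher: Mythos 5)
Your second half --- that $A/[A,A]$ futile together with $[A,A]$ finite implies $A$ futile --- is correct and is essentially the paper's Lemma~\ref{17}: a futile algebra is finitely generated (your directed-union remark), lifts of a finite generating set of $C=\pi(B)$ into a candidate subalgebra are each confined to a coset of the finite ideal, and the subalgebra is determined by those lifts up to finitely many further choices. Your cocycle/derivation packaging is a detour but not an error.

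The genuine gap is exactly the step you flag as unresolved: $A$ futile $\Rightarrow [A,A]$ finite. You do not prove it, and the two mechanisms you propose for the contrapositive provably fail to cover all cases. Take $A=\mathrm{Mat}_2(k)$ over an infinite field $k$, with $R=k$. Here $I=[A,A]=A$ is infinite, but $I=I^2$, so $I/I^2=0$ and your first mechanism yields the single subalgebra $A$; and since $\varphi(R)+I=A$, a complement of $I$ would have to be the zero subalgebra, so your second mechanism yields nothing. Yet $A$ is not futile: the subalgebras $k[E_{11}+vE_{12}]$ for $v\in k$ are pairwise distinct (each $E_{11}+vE_{12}$ is idempotent). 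So the dichotomy you hope to establish by ``merging the two mechanisms'' is not exhaustive, and no amount of bookkeeping will fix that. The paper avoids the contrapositive altogether: by Lemma~\ref{1} (which rests on B.~H.~Neumann's covering lemma, Lemma~\ref{7}), a futile $A$ is a finite union of commutative subalgebras $R[\alpha_i]$ each of finite additive index; their intersection is central and of finite index, so $(A:Z(A))<\infty$; and Lemma~\ref{100} then shows, by tensoring the sequence $0\to Z(A)\to A\to A/Z(A)\to 0$ with the finite module generated by the commutators, that a ring whose center has finite index has finite commutator ideal. Some such global finiteness input --- extracted from futility of $A$ itself, not of $I$ --- is what your sketch is missing.
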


An $R$-algebra $A$ is called monogenic if there exists $a
\in A$ with $A=R[a]$, where $R[a]$ is the smallest $R$-subalgebra of $A$ containing $a$.  The following theorem gives conditions when all futile
algebras are commutative. 

\begin{theorem} \label{24}
The following statements are equivalent:
\begin{enumerate}
\item all futile $R$-algebras are monogenic over $R$;
\item all commutative futile $R$-algebras are monogenic over $R$;
\item all futile $R$-algebras are commutative;
\item for all $\m \in \mathrm{Spec}(R)$ the ring $R/\m$ is infinite.
\end{enumerate}
\end{theorem}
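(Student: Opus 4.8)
The plan is to prove the cycle of implications $(1)\Rightarrow(2)$, $(1)\Rightarrow(3)$, $(2)\Rightarrow(4)$, $(3)\Rightarrow(4)$ and $(4)\Rightarrow(1)$, which together give all four equivalences (for instance $(1)\Rightarrow(2)\Rightarrow(4)\Rightarrow(1)$ and $(3)\Rightarrow(4)\Rightarrow(1)\Rightarrow(3)$). The two implications out of $(1)$ are immediate: a monogenic algebra $R[a]$ consists of polynomials in $a$ with central coefficients and is therefore commutative, so $(1)\Rightarrow(3)$; and a commutative futile algebra is in particular a futile algebra, so $(1)\Rightarrow(2)$.

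For $(2)\Rightarrow(4)$ and $(3)\Rightarrow(4)$ I would argue by contraposition, producing explicit finite counterexamples. If $(4)$ fails there is a maximal ideal $\m$ with $R/\m\cong\F_q$ finite, and every finite ring is futile (it has only finitely many subsets). Composing $R\twoheadrightarrow\F_q$ with the inclusion of scalars turns finite $\F_q$-algebras into $R$-algebras whose $R$-subalgebras are exactly the $\F_q$-subalgebras. For $(3)\Rightarrow(4)$ take $A=\mathrm{M}_2(\F_q)$, which is finite, hence futile, but noncommutative, contradicting $(3)$. For $(2)\Rightarrow(4)$ take $A=\F_q^{\,q+1}$, which is finite and commutative, hence a commutative futile algebra; it is not monogenic, since a would-be generator $c=(c_1,\dots,c_{q+1})$ has minimal polynomial $\prod_\beta(T-\beta)$, the product over the distinct coordinate values $\beta$, of degree at most $q<q+1=\dim_{\F_q}A$, contradicting $(2)$.

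The substance of the theorem is $(4)\Rightarrow(1)$. Assume $(4)$ and let $A$ be futile. First I would show $A$ is commutative: by Theorem~\ref{130} the ideal $[A,A]$ is finite, and as a two-sided ideal it is a finite $R$-module through the central copy of $R$; were it nonzero, a nonzero element $m$ would give a nonzero finite quotient ring $R/\Ann(m)$, forcing a finite residue field of $R$ and contradicting $(4)$. Hence $[A,A]=0$. Next I would reformulate monogenicity: since $A$ has only finitely many subalgebras, it has finitely many maximal proper subalgebras $M_1,\dots,M_k$ (if there are none then $A=R$ is trivially monogenic), and $A=R[a]$ holds for any $a$ lying in no $M_i$, because then the subalgebra $R[a]$ is contained in no maximal proper subalgebra and so equals $A$. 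Thus it suffices to show $A\neq M_1\cup\dots\cup M_k$; since each $M_i$ is a proper $R$-submodule, this follows from the covering lemma below.

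The main obstacle is this covering lemma, which is where hypothesis $(4)$ does its real work: if every residue field of $R$ is infinite, then no $R$-module is a union of finitely many proper submodules. To prove it I would suppose $A=\bigcup_{i=1}^n M_i$ is an irredundant such cover, choose $x\in M_1\setminus\bigcup_{i\geq2}M_i$ and $y\notin M_1$, and examine the line $x+ry$ for $r\in R$. The set of $r$ with $x+ry\in M_1$ is the proper ideal $(M_1:y)$, and for $i\geq2$ the set of $r$ with $x+ry\in M_i$ is either empty or a coset of the ideal $(M_i:y)$; moreover $(M_i:y)=R$ would force $x\in M_i$, which is excluded. Since the cover is exhaustive, $R$ is covered by finitely many cosets of these proper ideals. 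By B.~H.~Neumann's theorem on coset coverings, after passing to an irredundant subcover every subgroup appearing has finite additive index; choosing one such ideal $K\subsetneq R$ of finite index, $R/K$ is a nonzero finite ring and hence has a finite residue field, contradicting $(4)$. This proves the lemma and completes $(4)\Rightarrow(1)$. The delicate points I would watch are the irredundancy bookkeeping, needed to guarantee that the surviving subgroup is a proper ideal rather than all of $R$, and the passage from finite additive index to a finite residue field, which uses that $R$ is commutative so that $R/K$ is a finite commutative ring.
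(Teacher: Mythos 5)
Your proposal is correct, and the easy implications together with the counterexamples $\mathrm{Mat}_2(R/\m)$ and $(R/\m)^{q+1}$ are exactly those of the paper. The one place you genuinely diverge is iv $\Rightarrow$ i. The paper's route is shorter: by its Lemma \ref{1} (which is where Neumann's covering theorem enters, via Lemma \ref{7}), a futile $A$ is a finite union $\bigcup_i R[\alpha_i]$ with each $R[\alpha_i]$ of finite additive index in $A$; then $A/R[\alpha_1]$ is a finite $R$-module, and under iv a nonzero finite $R$-module is impossible (it would produce a finite nonzero quotient of $R$, hence a finite residue field), so $A=R[\alpha_1]$. You instead reduce monogenicity to avoiding the finitely many maximal proper subalgebras and prove a standalone avoidance lemma (no module over a ring all of whose residue fields are infinite is a finite union of proper submodules), applying Neumann's theorem to a coset cover of $R$ extracted from the line $x+ry$. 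Both arguments rest on the same two pillars --- Neumann's coset-covering theorem and the fact that a proper finite-index additive subgroup of $R$ contradicts iv --- so neither buys extra generality; yours costs an additional layer of bookkeeping (irredundancy, the case analysis on $(M_i:y)$) but isolates a reusable prime-avoidance-style lemma. Your preliminary commutativity step via Theorem \ref{130} is sound (no circularity, as that theorem is proved from Lemmas \ref{9} and \ref{17} only) but redundant, since once $A=R[a]$ is established commutativity is automatic, and your maximal-subalgebra argument never uses it.
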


The case where our $R$-algebra $A$ is assumed to be commutative, has been studied intensively before by various authors and has resulted in
\cite{DO5}. In their work, one says that a commutative $R$-algebra $A$ satisfies FIP if $A$ is $R$-futile. In Section \ref{66} we will discuss their
work. In this theory, one reduces in some cases to the case where $R$ is an infinite local
artinian ring. The case that $R$ is local artinian, was handled in \cite{DO5}, but we provide a different treatment of this case. The following three
theorems summarize our results. Proofs can be found in Section \ref{1234}. 

The first theorem discusses when an extension of fields is futile.

\begin{theorem} \label{401}
 Let $L/K$ be an extension of fields. Let $p$ be the characteristic of $K$ if the characteristic is nonzero and $1$ otherwise. Then the following are
equivalent:
\begin{enumerate}
 \item $L$ is a futile $K$-algebra;
 \item $[L:K] < \infty$ and $[L:L^p K] \in \{1,p\}$;
 \item $L=K[\alpha]$ for some $\alpha \in L$ (the field extension is primitive).
\end{enumerate}
\end{theorem}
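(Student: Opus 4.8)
The plan is to prove the equivalences by first reducing to the finite\dash dimensional case, where the $K$-subalgebras of $L$ coincide with the intermediate fields, and then identifying (1)$\Leftrightarrow$(3) with Steinitz's form of the primitive element theorem and (2)$\Leftrightarrow$(3) with the classical description of simple extensions by their degree of inseparability.

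First I would dispose of the infinite case. If some $\alpha\in L$ is transcendental over $K$, then $K[\alpha]\cong K[X]$ contains the strictly descending chain $K[X^2]\supsetneq K[X^4]\supsetneq K[X^8]\supsetneq\cdots$, so $L$ has infinitely many $K$-subalgebras; and if $L/K$ is algebraic but of infinite degree, an infinite tower of finite subextensions again produces infinitely many subalgebras. Hence (1) forces $[L:K]<\infty$, and (2),(3) visibly do as well (in (3), $K[\alpha]=L$ is a field, so $\alpha$ is algebraic and $L=K(\alpha)$). Once $[L:K]<\infty$, any $K$-subalgebra $B$ is a finite\dash dimensional domain over $K$, hence a field, so the $K$-subalgebras of $L$ are exactly the intermediate fields of $L/K$. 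Thus (1) says precisely that $L/K$ has only finitely many intermediate fields.

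With this dictionary, (1)$\Leftrightarrow$(3) is Steinitz's theorem. For (3)$\Rightarrow$(1) I would attach to each intermediate field $M$ the minimal polynomial $g_M$ of a fixed generator $\theta$ over $M$; since $g_M$ is a monic divisor in $L[X]$ of the minimal polynomial of $\theta$ over $K$, and $M$ is generated over $K$ by the coefficients of $g_M$, there are only finitely many such $M$. For (1)$\Rightarrow$(3), if $K$ is finite then $L^\times$ is cyclic and $L/K$ is trivially simple; if $K$ is infinite it suffices to treat $L=K(\alpha,\beta)$ (the general case following by induction on the number of generators), and among the infinitely many fields $K(\alpha+c\beta)$ with $c\in K$ two must coincide, forcing $\beta$ and then $\alpha$ into a single $K(\alpha+c\beta)$.

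Finally I would prove (2)$\Leftrightarrow$(3). The easy direction (3)$\Rightarrow$(2) is a computation: for $L=K(\theta)$ one has $L^pK=K(\theta^p)$, and $\theta$ is a root of $X^p-\theta^p$ over $K(\theta^p)$, so $[L:L^pK]\in\{1,p\}$ (in characteristic $0$ the condition is vacuous). The substantive direction is (2)$\Rightarrow$(3). Here I would let $S$ be the separable closure of $K$ in $L$. Each separable $\gamma$ lies in $K(\gamma^p)\subseteq L^pK$, so $S\subseteq L^pK$, and in fact $L^pK=SL^p$; therefore $[L:L^pK]=[L:SL^p]$ is the degree of imperfection of the purely inseparable extension $L/S$. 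The hypothesis $[L:L^pK]\le p$ then forces $L/S$ to be generated by a single element $u$, and writing $S=K(\beta)$ (separable, hence simple) exhibits $L=K(\beta,u)$ as an extension all but one of whose generators is separable, which is simple. I expect this last direction to be the main obstacle, since it is exactly where the inseparable structure theory is needed: interpreting $[L:SL^p]$ as the minimal number of generators of $L/S$, and the ``all but one separable'' criterion for simplicity.
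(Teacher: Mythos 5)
Your proof is correct, but it takes a genuinely different route from the paper's. The paper proves the cycle (i)$\Rightarrow$(iii)$\Rightarrow$(ii)$\Rightarrow$(i): for (i)$\Rightarrow$(iii) it does not reprove Steinitz but invokes its general machinery (Theorem \ref{31} when $K$ is finite, Theorem \ref{24} when $K$ is infinite, the latter saying that over a base all of whose residue fields are infinite every futile algebra is monogenic), and for (ii)$\Rightarrow$(i) it never passes through primitivity: it splits the lattice of intermediate fields by the injection $E\mapsto (E\cap L_s,\,EL_s)$ of Lemma \ref{1111}, disposes of $L_s/K$ by Galois theory, and exhibits the subfields of the purely inseparable part explicitly as the chain $L\supsetneq L^pK\supsetneq\cdots\supsetneq L^{p^i}K=L_s$. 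You instead hinge both equivalences on (iii), proving (i)$\Leftrightarrow$(iii) as Steinitz's theorem from scratch and (ii)$\Leftrightarrow$(iii) via the structure theory of inseparability. Your version is more self-contained (it does not need the futility theorems from Sections \ref{307} and \ref{1234}); the paper's version yields as a by-product an explicit enumeration of all subfields, which is what makes the count finite without ever producing a primitive element. The one place in your argument that carries real weight is (2)$\Rightarrow$(3): you need that for the purely inseparable extension $L/S$ the minimal number of generators equals $\log_p[L:SL^p]$ (the Nakayama-style argument $L=ML^p\Rightarrow L=ML^{p^n}\Rightarrow L=MS=M$), and that a finite extension all but one of whose generators are separable is simple; both are standard, and you correctly flag them as the crux, so the proof goes through.
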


The second theorem describes the futile $R$-algebras when $R$ is an infinite field. For a commutative ring $S$ we put $\sqrt{0}_S$ for the set of
nilpotent elements in $S$.

\begin{theorem} \label{3}
Let $R$ be an infinite field. Then the following properties are equivalent for an $R$-algebra $A$:
\begin{enumerate}
 \item $A$ is a futile $R$-algebra;
 \item  $A \cong_R A' \times \prod_{i \in I} A_i$ where $I$ is a finite size and the $A_i$ are finite primitive field extensions of $R$ and $A'$ is a
commutative $R$-algebra which
satisfies $\mathrm{dim}_R(A') \leq 3$ and if $\mathrm{dim}_R(A')=3$, then $\sqrt{0_{A'}}^2 \neq 0$;
 \item $A \cong_R R[x]/(f)$ where $f \in R[x]$ splits into irreducible factors
$f= \prod_{i=1}^m f_i^{n_i}$ where all the $f_i$ are monic, pairwise coprime, $n_i=1$ for all but possibly one $i$ in which case $\mathrm{deg}(f_i)=1$
and $n_i \leq 3$.
\end{enumerate}  
\end{theorem}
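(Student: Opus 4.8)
The plan is to prove $(i)\Rightarrow(iii)\Rightarrow(ii)\Rightarrow(i)$, treating $(ii)\Leftrightarrow(iii)$ as a structural translation and concentrating the real work in $(i)\Leftrightarrow(iii)$. First I would reduce to the monogenic case. Since $R$ is an infinite field, $\Spec(R)=\{(0)\}$ and $R/(0)=R$ is infinite, so Theorem~\ref{24} applies and every futile $R$-algebra is commutative and monogenic. Thus from $(i)$ we may write $A=R[a]$; the generator $a$ must be algebraic over $R$, for otherwise $A\cong R[x]$, which is not futile (the subalgebras $R[x^n]$, $n\ge 1$, are pairwise distinct). Hence $A\cong R[x]/(f)$ for a monic $f$, $A$ is finite-dimensional, and writing $f=\prod_{i=1}^m f_i^{n_i}$ as a product of distinct monic irreducibles we get $A\cong\prod_i R[x]/(f_i^{n_i})$ by the Chinese Remainder Theorem, with local factors whose residue fields $K_i=R[x]/(f_i)$ are automatically primitive (being monogenic quotients of $A$). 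The problem now is to read off from futility the shape of $f$ demanded in $(iii)$.

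The engine for the necessity direction is the following observation, which I would isolate as a lemma: \emph{if $A$ contains an $R$-subspace $V$ with $V\cdot V=0$ and $\dim_R V\ge 2$, then $A$ is not futile.} Indeed any such $V$ satisfies $V\cap R=0$ (a nonzero scalar in $V$ would be a unit squaring to $0$), and for every $R$-subspace $V'\subseteq V$ the set $R\oplus V'$ is a subalgebra; since $R$ is infinite there are infinitely many lines $V'\subseteq V$, hence infinitely many subalgebras. Applying this: in a local factor $R[x]/(f_i^{n_i})$ with $n_i\ge 2$ the socle $(f_i^{n_i-1})\cong K_i$ is a square-zero subspace of dimension $\deg f_i$; if some $n_i\ge 2$ had $\deg f_i\ge 2$ this would already defeat futility, so the nilpotent factor must have $\deg f_i=1$. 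Two distinct nilpotent factors would likewise contribute a two-dimensional square-zero subspace (one socle element from each), so at most one $n_i$ exceeds $1$. Finally, for the surviving nilpotent factor $R[\epsilon]/(\epsilon^{n_i})$ the span of $\{\epsilon^j: 2j\ge n_i\}$ is square-zero of dimension $\lfloor n_i/2\rfloor$, forcing $n_i\le 3$. These are exactly the constraints of $(iii)$, giving $(i)\Rightarrow(iii)$.

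The main obstacle is the converse $(iii)\Rightarrow(i)$. By the above $A\cong\prod_{j}K_j$ or $A\cong\bigl(\prod_{j}K_j\bigr)\times R[\epsilon]/(\epsilon^n)$ with $n\in\{2,3\}$ and the $K_j$ primitive finite field extensions of $R$. I would first dispose of the reduced case: each $K_j$ is futile by Theorem~\ref{401}, and a Goursat-type analysis shows a finite product of futile field extensions is futile, since a subalgebra is determined by its (finitely many) images in the factors together with the gluing isomorphisms between them, of which there are finitely many because finite field extensions have finite isomorphism sets. For the non-reduced case I would pass to the reduced quotient $\bar A=A/\sqrt{0_A}=\bigl(\prod_j K_j\bigr)\times R$, which is futile by the reduced case; any subalgebra $B\subseteq A$ has image $p(B)\subseteq\bar A$ among finitely many, and the crux is to show each fibre $\{B:p(B)=\bar B\}$ is finite. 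Here the hypothesis that there is a \emph{single, small} nilpotent factor $C=R[\epsilon]/(\epsilon^n)$ is essential: idempotents lift uniquely through the nilideal $\sqrt{0_A}=0\times\mathfrak m_C$, so $B$ decomposes along its idempotents, each block landing either in a field factor (finitely many possibilities) or in $C$ (whose subalgebras $R\subseteq R+R\epsilon^2\subseteq C$ form a finite chain), while $B\cap\sqrt{0_A}\subseteq\mathfrak m_C$ is pinned down by dimension and the chain structure of $C$. A continuous family of subalgebras could only arise by gluing nilpotent directions across two different factors, which is impossible with only one nilpotent factor; this rigidity is the key point and the step I expect to require the most care.

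It remains to record $(ii)\Leftrightarrow(iii)$, which is bookkeeping. For $(iii)\Rightarrow(ii)$ take $A_i=K_i$ (primitive by construction and futile by Theorem~\ref{401}) and $A'=R[\epsilon]/(\epsilon^n)$ (or $A'=0$ in the reduced case); then $\dim_R A'\le 3$ and $\sqrt{0_{A'}}^2=(\epsilon^2)$ is nonzero exactly when $\dim_R A'=3$. For $(ii)\Rightarrow(iii)$ I would classify the commutative $R$-algebras $A'$ with $\dim_R A'\le 3$ satisfying the stated nilpotent condition: in dimension $\le 2$ they are products of primitive field extensions and copies of $R[\epsilon]/(\epsilon^2)$, while in dimension $3$ the condition $\sqrt{0_{A'}}^2\ne 0$ forces $\dim_R(\sqrt{0_{A'}})\ge 2$ and hence $A'$ local with principal maximal ideal, i.e.\ $A'\cong R[\epsilon]/(\epsilon^3)$ (the clause precisely excludes the non-futile three-dimensional algebras, the reduced ones and those with square-zero nilradical). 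Absorbing the field factors of $A'$ into the $A_i$ then returns the normal form $R[x]/(f)$ of $(iii)$, completing the cycle.
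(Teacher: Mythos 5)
Your direction (i) $\Rightarrow$ (iii) is correct and complete: after reducing to $A=R[x]/(f)$ via Theorem \ref{24}, your single lemma --- a square-zero $R$-subspace $V$ with $\dim_R V\ge 2$ yields infinitely many subalgebras $R\oplus V'$ --- produces all three constraints on the $n_i$ at once. This is the same mechanism as Lemma \ref{4}(ii) in the paper (whose subalgebras $R\oplus(f\cdot\sum_i a_ix^i)$ are precisely the lines in the square-zero ideal $(f)/(f^2)$), but your packaging handles ``at most one nilpotent factor'' and ``$n_i\le 3$'' uniformly, where the paper instead quotients to $R[x]/(g^2)$ with $\deg g\ge 2$. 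The equivalence (ii) $\Leftrightarrow$ (iii) is also essentially right, except that in (ii) $\Rightarrow$ (iii) you should justify that the minimal polynomials of the factors can be chosen pairwise coprime when two field factors are isomorphic (the paper uses $R[x]/(f(x))\cong R[x]/(f(x-a))$ for infinitely many $a\in R$); without this the CRT presentation $A\cong R[x]/(f)$ does not follow.

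The genuine gap is in (iii) $\Rightarrow$ (i) for the non-reduced case, which you explicitly leave as the step ``requiring the most care.'' The fibre argument over $\bar A=A/\sqrt{0_A}$ rests on the claim that ``$B$ decomposes along its idempotents, each block landing either in a field factor or in $C$,'' and that is false as stated: a primitive idempotent block of a subalgebra $B$ may project onto several factors of $A$ (for instance the diagonal $R\subseteq R\times R[\epsilon]/(\epsilon^2)$, or a diagonal copy of a common subfield of $K_1$ and $K_2$ inside $K_1\times K_2$), so $B$ is not determined by blockwise data in the individual factors, and the asserted rigidity is not proved. The reduced-quotient detour is unnecessary: apply the Goursat analysis you already invoke for $\prod_j K_j$ once more, to $F\times R[\epsilon]/(\epsilon^n)$ with $F=\prod_j K_j$ and $n\le 3$. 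The Goursat quintuples $(C,D,I,J,\varphi)$ are finite in number because both factors are futile, every subalgebra of either factor has only finitely many ideals (subalgebras of $F$ are products of finite field extensions; subalgebras of $R[\epsilon]/(\epsilon^n)$ form the chain $R\subseteq R+R\epsilon^2\subseteq R[\epsilon]/(\epsilon^3)$), and the set of isomorphisms $C/I\to D/J$ is either empty or a torsor under $\mathrm{Aut}_R(C/I)$, which is finite since $C/I$ is again a product of finite field extensions. This is exactly Lemma \ref{11} combined with Lemma \ref{12} in the paper, and it closes the step your sketch leaves open.
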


The third theorem describes futile $R$-algebras where $R$ is an infinite local artinian ring. It makes use of the previous theorem. An $R$-module
$M$ is called uniserial if the $R$-submodules of $M$ are ordered linearly by inclusion.

\begin{theorem} \label{913}
Let $(R,\m)$ be a local artinian ring such that $k=R/\m$ is infinite and let $A$ be an $R$-algebra. Put $T=R+\sqrt{0_A}$ with maximal ideal
$\mathfrak{n}=\sqrt{0_A}$ and put $r_T= \mathrm{dim}_{R/\m}(\sqrt{0_{T/\m T}})$. Then the following properties are
equivalent.
\begin{enumerate}
 \item $A$ is a futile $R$-algebra;
 \item $A$ is commutative, $A/\m A$ and $T/\m T$ are futile $k$-algebras, $\m (A/R)$ is a uniserial $R$-module and
if $r_T=2$, then one has $\mathfrak{n}^4+\mathfrak{n}^2\m+\m=\m T$ in $T$.
\end{enumerate}
\end{theorem}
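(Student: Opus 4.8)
The plan is to reduce the statement to the infinite-field case of Theorem~\ref{3} by passing to the residue field modulo $\m$, using throughout that $k$ is infinite together with Nakayama's lemma. First I would settle commutativity and finiteness. As $R$ is local artinian we have $\Spec(R)=\{\m\}$, and $R/\m=k$ is infinite, so condition (iv) of Theorem~\ref{24} holds and every futile $R$-algebra is commutative; hence in proving (1)$\Rightarrow$(2) I may assume $A$ commutative, while in (2)$\Rightarrow$(1) this is granted. A commutative futile algebra is integral over $R$ with a finite subalgebra lattice, hence module-finite, and as $R$ is artinian it then has finite length, so $\sqrt{0_A}$ is a nilpotent ideal and every module occurring below ($A/R$, $\m A$, the $\m$-adic quotients) has finite length. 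One checks directly that $T=R+\sqrt{0_A}$ is then local with maximal ideal $\mathfrak{n}=\sqrt{0_A}$ and residue field $k$.

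Next comes the reduction-mod-$\m$ dictionary. Taking preimages along $A\to A/\m A$ gives an inclusion-preserving bijection between the $k$-subalgebras of $A/\m A$ and the $R$-subalgebras of $A$ containing $\m A$, and likewise for $T$. Since $T$ is an $R$-subalgebra of $A$, every subalgebra of $T$ is a subalgebra of $A$, so futility of $A$ forces futility of $T$; reducing $A$ and $T$ modulo $\m$ then shows $A/\m A$ and $T/\m T$ are futile $k$-algebras. This already yields two of the conditions in (2), and the same dictionary will serve for sufficiency.

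For the uniserial condition I would filter $M=\m(A/R)$ by its $\m$-adic filtration $\m^{j}M=(R+\m^{j+1}A)/R$. The key point is that each graded quotient $\m^{j}M/\m^{j+1}M\cong(R+\m^{j+1}A)/(R+\m^{j+2}A)$ carries the zero multiplication, because a product of two elements of $\m^{j+1}A$ lands in $\m^{2j+2}A\subseteq\m^{j+2}A$; consequently every $k$-subspace of such a graded quotient lifts to an $R$-subalgebra of $A$ lying between $R+\m^{j+2}A$ and $R+\m^{j+1}A$, and distinct subspaces give distinct subalgebras. As $k$ is infinite, futility of $A$ forces each graded quotient to be at most one-dimensional, and a Nakayama argument then shows that every nonzero submodule of $M$ equals some $\m^{j}M$, so $M=\m(A/R)$ is uniserial. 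Run in reverse, uniseriality bounds the number of subalgebras contributed by each such layer, which is what is needed for sufficiency.

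It remains to treat the boundary case $r_T=2$ and to assemble the converse. When $r_T=2$ the nilradical of $T/\m T$ is two-dimensional, so the futility of $T/\m T$ over the infinite field $k$ is exactly the three-dimensional case of Theorem~\ref{3}, whose nondegeneracy requirement $\sqrt{0}^2\neq 0$ lifts to the relation $\mathfrak{n}^4+\mathfrak{n}^2\m+\m=\m T$ on $T$; I would show this relation holds if and only if the finitely many subalgebra data of $T/\m T$ lift to finitely many $R$-subalgebras of $T$, a continuum of subalgebras appearing otherwise. For (2)$\Rightarrow$(1) I would stratify an arbitrary $R$-subalgebra $B$ by its image $(B+\m A)/\m A$ in the futile algebra $A/\m A$ (finitely many choices), and for a fixed image control the remaining freedom through the chain of intersections $B\cap\m^{j}A$ inside the uniserial module $\m A$, together with the futility of $T/\m T$ and, when $r_T=2$, the relation. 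The hard part will be precisely this last analysis: proving that the single relation $\mathfrak{n}^4+\mathfrak{n}^2\m+\m=\m T$ is exactly the dividing line between finitely many subalgebras and a continuum. This forces one to understand how the structure of the futile $k$-algebra $T/\m T$ supplied by Theorem~\ref{3} lifts through $T\to T/\m T$, and to rule out one-parameter families of $R$-subalgebras built from generic $k$-combinations of nilpotents---the setting in which the infinitude of $k$ is simultaneously the source of the obstruction and, via Nakayama, the means of controlling it.
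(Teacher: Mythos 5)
Your necessity direction is largely sound and matches the paper's route: commutativity via Theorem~\ref{24}(iv), module-finiteness via Theorem~\ref{31}, futility of $A/\m A$ and $T/\m T$ from Lemma~\ref{0}, and the uniseriality of $\m(A/R)$ by observing that the graded pieces $(\m^{n}A+R)/(\m^{n+1}A+R)$ have square-zero multiplication, so that every $k$-subspace of a piece of dimension $\ge 2$ yields a distinct subalgebra and the infinitude of $k$ gives a contradiction --- this is precisely the argument in Proposition~\ref{105}. But the proof has a genuine gap where you yourself flag ``the hard part'': both the necessity of the relation $\mathfrak{n}^4+\mathfrak{n}^2\m+\m=\m T$ when $r_T=2$ and the entire sufficiency direction are announced as intentions rather than carried out. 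The paper's treatment of these (Propositions~\ref{105} and~\ref{122}) requires two structural ingredients that your stratification plan omits. First, since $A$ need not be local, stratifying an arbitrary subalgebra $B$ by its image in $A/\m A$ and its intersections $B\cap\m^{j}A$ does not suffice; one needs the Goursat-type decomposition of subalgebras into local pieces over $\mathrm{Spec}(A)$ (Lemma~\ref{94}) to reduce to local subalgebras. Second, the pivotal step linking $A$ to $T$ is the claim that a local subalgebra $B$ with $B\not\supseteq\m A$ satisfies $B\subseteq T=R+\sqrt{0_A}$; this is where uniseriality is used in the sufficiency direction, via the chain $\m A\supsetneq\m^2A+\m\supseteq B\cap\m A\supseteq\m$ forcing $\m A/(\m^2A+\m)$ to be a simple $B$-module and hence $B/\mathfrak{n}_B\cong k$. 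Without this containment, the futility of $T/\m T$ cannot be brought to bear on general subalgebras of $A$.

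For the $r_T=2$ relation itself, the actual dividing-line analysis is not a direct lift of the condition $\sqrt{0}^{\,2}\neq 0$ from Theorem~\ref{3}: one must introduce the intermediate ring $A'=R+\mathfrak{n}^2+\m T$, compute $t=\dim_k(A'/\m A')\in\{2,3\}$ using uniseriality, and in the case $t=3$ show that the stated relation is equivalent to $\m_{A'}^2\not\subseteq\m A'$, i.e.\ to $A'/\m A'\cong k[x]/(x^3)$ being futile; the finiteness of the set of subalgebras with intermediate image then follows from a surjectivity-plus-Nakayama argument forcing $B=A'$. None of this is reconstructible from the sketch as written, so the proposal does not yet constitute a proof of the equivalence, even though its overall architecture (reduce mod $\m$, exploit infinitude of $k$, isolate the local piece $T$) agrees with the paper's.
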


Finally, we will prove a theorem which summarizes the results for $R=\Z$ (Theorem \ref{21}).

\begin{theorem}
A $\Z$-algebra $A$ is $\Z$-futile if and only if one of the following holds:
\begin{itemize}
\item $A$ is finite;
\item  $A_{\Z\textrm{-}\mathrm{tor}}$ is
finite and
$A/A_{\Z\textrm{-}\mathrm{tor}} \cong \Z[1/n] \subset \Q$ for some $n \in \Z \setminus \{0\}$.
\end{itemize}
\end{theorem}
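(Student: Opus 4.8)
The plan is to reduce to the commutative case via Theorem \ref{130} and then classify commutative futile $\Z$\dash algebras directly; note that Theorem \ref{24} does \emph{not} apply here, since the residue fields $\F_p$ of $\Z$ are finite, so genuinely non-commutative futile $\Z$\dash algebras exist and the commutator reduction is essential. If $A$ is $\Z$\dash futile, Theorem \ref{130} gives that $[A,A]$ is finite and $\bar A:=A/[A,A]$ is a commutative futile $\Z$\dash algebra; conversely, if $A_{\Z\textrm{-tor}}$ is finite and $A/A_{\Z\textrm{-tor}}\cong\Z[1/n]$ then $A/A_{\Z\textrm{-tor}}$ is commutative, so $[A,A]\subseteq A_{\Z\textrm{-tor}}$ is finite and $\bar A$ has the same shape, and Theorem \ref{130} applies. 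The bookkeeping matching the hypotheses on $A$ and on $\bar A$ is routine: the preimage in $A$ of $\bar A_{\Z\textrm{-tor}}$ is an extension of a finite group by the finite ideal $[A,A]$, hence finite, and it equals $A_{\Z\textrm{-tor}}$ because the further quotient is torsion-free; also $A$ is finite exactly when $\bar A$ is. Thus everything reduces to the commutative statement: a commutative ring $A$ is $\Z$\dash futile iff it is finite, or $A_{\Z\textrm{-tor}}$ is finite and $A/A_{\Z\textrm{-tor}}\cong\Z[1/n]$.

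I would first record three tools for a futile ring $A$: (i) $A$ is finitely generated as a $\Z$\dash algebra, because there are only finitely many monogenic subrings $\Z[a]$ and finitely many of their generators already generate $A$; (ii) every subring and every quotient of a futile ring is futile, since the subrings of $B\subseteq A$ are among those of $A$, and distinct subrings of $A/I$ have distinct preimages in $A$; (iii) \emph{key lemma:} an infinite module $M$ over a finitely generated $\Z$\dash algebra has infinitely many submodules, for otherwise $M$ would satisfy both chain conditions, hence have finite length, and as all residue fields at maximal ideals of such an algebra are finite, a finite-length module would be a finite set.

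For necessity, assume $A$ is commutative, futile and infinite, and put $T=A_{\Z\textrm{-tor}}$. If $\mathrm{char}(A)>0$ then $A$ is torsion, and reducing modulo a suitable minimal prime (using (ii) and a dévissage along the nilradical to keep things infinite) yields an infinite finitely generated $\F_p$\dash domain, which contains a transcendental element and hence a polynomial subring $\F_p[x]$; but $\F_p[x]$ is not futile (the $\F_p+x^m\F_p[x]$ give infinitely many subrings), a contradiction. So $\mathrm{char}(A)=0$, $T$ is a proper ideal, and $A/T$ is an infinite torsion-free futile ring; every $b\in A/T$ is algebraic over $\Q$ (a transcendental $b$ would make $\Z[b]$ a non-futile polynomial ring), so $V:=(A/T)\otimes\Q$ is finite-dimensional over $\Q$. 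If $\dim_\Q V\ge 2$ I would take $b\in A/T$ with $1,b$ independent over $\Q$, scale to an algebraic integer $\theta=Nb$, and use that $\Z[\theta]$ is $\Z$\dash free on $1,\theta,\dots,\theta^{r-1}$ with $[\Z[\theta]:\Z[\ell\theta]]=\ell^{r(r-1)/2}\to\infty$ to exhibit infinitely many subrings; hence $\dim_\Q V=1$, $A/T\subseteq\Q$, and finite generation forces $A/T\cong\Z[1/n]$. Finally $T$ is finite: it is an ideal, hence an $A$\dash module, and were it infinite the key lemma (iii) would give infinitely many submodules $J\subseteq T$, whence the subrings $\Z\cdot 1+J$, distinguished by their torsion parts $J$ since $\mathrm{char}(A)=0$, would be infinitely many.

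The sufficiency direction contains the step I expect to be the main obstacle. The finite case is clear, so consider an extension $0\to T\to A\xrightarrow{\pi}\Z[1/n]\to 0$ with $T$ finite; since the quotient $\Z[1/n]$ is infinite, a naive Goursat-style count fails and one must genuinely control sections. I would show that a subring $C$ is determined by the finitely many data $(C\cap T,\ \pi(C))$, where $\pi(C)=\Z[1/d]$ for one of the finitely many $d$ whose prime factors divide $n$. The crucial point is that $\Z[1/d]\cong\Z[x]/(dx-1)$ is monogenic with a single relation, so the isomorphism $C/(C\cap T)\xrightarrow{\sim}\Z[1/d]$ induced by $\pi$ is pinned down by the image $a$ of $1/d$, which is a solution of $da=1$ in $A/(C\cap T)$; any two solutions differ by an element of the finite group $\{y:dy=0\}\subseteq T$. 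Hence there are at most $|T|$ possibilities for $C/(C\cap T)$, and therefore for $C$, for each pair $(C\cap T,\pi(C))$, giving finitely many subrings in total. I regard the identification $\Z[1/d]=\Z[x]/(dx-1)$ and the resulting reduction of the counting to solutions of $dx=1$ modulo finite torsion as the heart of the proof and the step most in need of care.
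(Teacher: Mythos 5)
Your proof is correct, and it shares the paper's overall skeleton (show $A_{\Z\textrm{-tor}}$ is finite, show the torsion\dash free quotient embeds in $\Q$ and is $\Z[1/n]$, and prove sufficiency by counting lifts of subrings through the finite torsion ideal), but several of your supporting arguments genuinely differ from the paper's. The paper handles non-commutativity implicitly, since its Lemmas \ref{17}, \ref{18}, \ref{19} and Theorem \ref{2} are stated for arbitrary algebras, whereas you route explicitly through Theorem \ref{130}; both work. For necessity the paper leans twice on Theorem \ref{2} (a futile algebra over an artinian base is a finite module): once to dispose of the case $\ker(\Z\to A)\neq 0$, and once, applied to $B_r/rB_r$ with $B_r=\Z\oplus A[r]$, to get finiteness of the torsion. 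You replace this with your key lemma (finitely many submodules $\Rightarrow$ finite length $\Rightarrow$ finite, using that residue fields of finitely generated $\Z$\dash algebras are finite) together with a Noether\dash normalization\dash flavoured argument producing a non-futile $\F_p[x]$ in positive characteristic. For the rank\dash one statement the paper localizes to $\Q$ and invokes Theorem \ref{3} plus the subrings $R+I\cdot\Z[\theta]$ (Lemma \ref{18}); your indices $[\Z[\theta]:\Z[\ell\theta]]$ achieve the same thing, though note that the formula $\ell^{r(r-1)/2}$ silently uses that $1,\theta,\dots,\theta^{r-1}$ is a $\Z$\dash basis of $\Z[\theta]$ --- true here by Gauss's lemma (the monic $\Q$\dash minimal polynomial of $\theta$ has integer coefficients since $\theta$ satisfies a monic integral equation), but worth a sentence; the paper's $\Z+\ell\Z[\theta]$ subrings sidestep this. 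Your ``d\'evissage along the nilradical'' also deserves a line: if every minimal prime of a finitely generated $\F_p$\dash algebra has finite quotient, the algebra is artinian with finite composition factors, hence finite. For sufficiency, your count of subrings $C$ by the data $(C\cap T,\pi(C))$ and a solution of $dx=1$ is a concrete instance of the paper's Lemma \ref{17} (quotienting by a finite ideal preserves and reflects futility), made explicit via monogenicity of $\Z[1/d]$. The trade\dash off: the paper's route is shorter given its general machinery and yields the statement for any principal ideal domain with finite residue fields; yours is more self\dash contained and elementary for $\Z$ itself.
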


\section{General statements} \label{307}

In this section we will prove certain statements which hold for any commutative ring $R$. Throughout this section, we let $A$ be a not necessarily
commutative $R$-algebra with morphism $\varphi: R \to A$.

\begin{theorem} \label{120}
 Let $R_1,\ldots, R_n$ be rings ($n \in \Z_{\geq 1}$). Put $R= \prod_{i=1}^n R_i$. Then we have an equivalence of categories
\begin{eqnarray*}
\varphi: \mathrm{Alg}_{R_1} \times \ldots \times \mathrm{Alg}_{R_n} &\to& \mathrm{Alg}_{R} \\
(A_1,\ldots,A_n) &\mapsto& A_1 \times \ldots \times A_n \\
(\varphi_1, \ldots,\varphi_n) &\mapsto& \varphi_1 \times \ldots \times \varphi_n.
\end{eqnarray*}
\end{theorem}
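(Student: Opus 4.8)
The plan is to exhibit an explicit quasi-inverse to $\varphi$ built from the orthogonal idempotents that define the product decomposition of $R$. Write $e_1, \ldots, e_n \in R$ for the standard idempotents, so that $e_i e_j = 0$ for $i \neq j$, $e_i^2 = e_i$, $\sum_i e_i = 1$, and $e_i R = R_i$. For any $R$-algebra $A$ with structure map $\iota_A : R \to A$, the elements $\iota_A(e_i)$ lie in $Z(A)$ by the very definition of an $R$-algebra, and they form a family of orthogonal idempotents summing to $1_A$. I would define the candidate quasi-inverse $\Psi : \mathrm{Alg}_R \to \mathrm{Alg}_{R_1} \times \cdots \times \mathrm{Alg}_{R_n}$ on objects by $\Psi(A) = (e_1 A, \ldots, e_n A)$, where $e_i A = \iota_A(e_i) A$.

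First I would check that each $e_i A$ is genuinely an object of $\mathrm{Alg}_{R_i}$. Because $\iota_A(e_i)$ is a central idempotent, $e_i A$ is closed under the multiplication of $A$: for $a, b \in A$ one has $(e_i a)(e_i b) = e_i^2 ab = e_i(ab) \in e_i A$, moving the idempotents past each other by centrality. Thus $e_i A$ is a ring with unit $\iota_A(e_i)$ (not $1_A$), and the restriction of $\iota_A$ to $e_i R = R_i$ makes it an $R_i$-algebra. On morphisms, a homomorphism $f : A \to B$ in $\mathrm{Alg}_R$ satisfies $f \circ \iota_A = \iota_B$ and hence $f(\iota_A(e_i)) = \iota_B(e_i)$; therefore $f(e_i A) \subseteq e_i B$, and $\Psi(f)$ is the tuple of restrictions $f|_{e_i A} : e_i A \to e_i B$. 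Functoriality of $\Psi$ is then immediate.

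It remains to produce natural isomorphisms $\Psi \circ \varphi \cong \mathrm{id}$ and $\varphi \circ \Psi \cong \mathrm{id}$. For the first, given $(A_1, \ldots, A_n)$ the idempotent $e_i$ acts on $\prod_j A_j$ as the projection onto the $i$-th factor, so $e_i\bigl(\prod_j A_j\bigr)$ is canonically $A_i$, naturally in the tuple. For the second, I would use the map $A \to \prod_i e_i A$ sending $a \mapsto (e_1 a, \ldots, e_n a)$, with inverse $(a_i)_i \mapsto \sum_i a_i$; it is additive and bijective, and the relations $e_i e_j = 0$ together with centrality show it respects multiplication, so it is an isomorphism of $R$-algebras, clearly natural in $A$.

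The only real subtlety — which I would flag as the point to get right rather than a deep obstacle — is the bookkeeping around units: each $e_i A$ is unital in its own right with unit $\iota_A(e_i)$, even though this differs from $1_A$, and one must confirm that the decomposition $A \cong \prod_i e_i A$ carries the multiplication of $A$ to the componentwise one. This reduces entirely to the two identities $(e_i a)(e_i b) = e_i ab$ and $(e_i a)(e_j b) = 0$ for $i \neq j$, both of which follow from centrality and orthogonality of the idempotents. Once these are recorded, everything else is formal.
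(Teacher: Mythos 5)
Your proof is correct and is essentially the paper's proof made explicit: the paper's quasi-inverse $A \mapsto (A \otimes_R R_i)_{i=1}^n$ is canonically the same as your $A \mapsto (e_iA)_{i=1}^n$, since $R_i = e_iR$ is a direct factor of $R$ and hence $A \otimes_R e_iR \cong e_iA$. The bookkeeping you carry out with the central orthogonal idempotents is exactly the content the paper dismisses as ``easy.''
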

\begin{proof}
 The inverse is given by $A \mapsto (A \otimes_R R_i)_{i=1}^n$. The rest is easy.
\end{proof}

The above theorem allows us to reduce to the case where $R$ is a connected ring if $R$ has finitely many idempotents.

\begin{lemma}
 Assume that the index $(A:R)=\#A/R$ is finite. Then $A$ is a futile $R$-algebra.
\end{lemma}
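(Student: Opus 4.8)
The plan is to exploit the fact that every $R$-subalgebra of $A$ sits between $\varphi(R)$ and $A$, so that it is pinned down by a small amount of data once $A/\varphi(R)$ is finite. Concretely, let $\pi \colon A \to A/\varphi(R)$ be the quotient map of additive groups. First I would observe that any $R$-subalgebra $B$ of $A$ contains $\varphi(R)$ by definition, hence $B$ is a union of cosets of $\varphi(R)$; equivalently, $B = \pi^{-1}(\pi(B))$. Consequently the assignment $B \mapsto \pi(B)$ is injective on the set of $R$-subalgebras of $A$, since $B$ is recovered from $\pi(B)$ by taking the preimage under $\pi$.

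Next I would bound the number of possible images. Each $\pi(B)$ is a subset of $A/\varphi(R)$, and by hypothesis $\#(A/\varphi(R)) = (A:R)$ is finite. Therefore $A/\varphi(R)$ has only $2^{(A:R)}$ subsets, and by the injectivity just noted there are at most this many $R$-subalgebras of $A$. This already yields finiteness, so $A$ is $R$-futile.

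There is essentially no obstacle here. One does not even need to check that $\pi(B)$ is an additive subgroup or that it inherits any algebra structure, because the crude count of \emph{all} subsets of the finite set $A/\varphi(R)$ already suffices. The only point that genuinely must be verified is the injectivity of $B \mapsto \pi(B)$, and this rests entirely on the defining property that an $R$-subalgebra contains $\varphi(R)$; without that containment one could not recover $B$ as $\pi^{-1}(\pi(B))$.
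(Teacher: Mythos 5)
Your proof is correct and is exactly the paper's argument: both send an $R$-subalgebra $B$ to its image in $A/\varphi(R)$ and use that this map into the (finite) power set of $A/\varphi(R)$ is injective because $B\supseteq\varphi(R)$ forces $B=\pi^{-1}(\pi(B))$. Nothing to add.
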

\begin{proof}
Consider the injective map from the set of $R$-subalgebras of $A$ to the power set of $A/R$ given by $B \mapsto \mathrm{Im}(B \to
A/R)$.
\end{proof}

We have the following easy observations.

\begin{lemma} \label{0}
The following statements hold:
\begin{enumerate}
 \item 
 Assume that $A$ is $R$-futile. Then one has:
 \begin{enumerate}
 \item Any $R$-subalgebra of $A$ is $R$-futile.
 \item Let $I \subseteq A$ be a two sided ideal of $A$. Then $A/I$ is a futile $R$-algebra.
 \item Let $I \subseteq R$ be an ideal. Then $A/IA$ is a futile $R/I$-algebra.
\end{enumerate}
 \item
 Let $I \subseteq R$ be a common ideal of $R$ and $A$. Then $A$ is $R$-futile if and only if $A/I$ is $R/I$-futile. 
 \item
Let $S$ be any multiplicative subset of $R$ and let $\varphi: A \to S^{-1}A$. The map from $S^{-1}R$-subalgebras of $S^{-1}A$ to $R$-subalgebras of
$A$, given by $B \mapsto \varphi^{-1}(B)$, is injective and respects inclusions. If $A$ is $R$-futile, then $S^{-1}A$ is $S^{-1}R$-futile. 
\end{enumerate}
\end{lemma}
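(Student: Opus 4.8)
The plan is to exploit a single principle: to show an algebra $B$ is futile it suffices to inject its set of subalgebras into a finite set, and the natural target is the finite set of $R$-subalgebras of the given futile algebra $A$. Part (i)(a) will then follow immediately, since any $R$-subalgebra of an $R$-subalgebra $B\subseteq A$ is in particular an $R$-subalgebra of $A$, so the $R$-subalgebras of $B$ form a subset of a finite set.

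For (i)(b) I would use the quotient map $\pi\colon A\to A/I$ and the assignment $C\mapsto\pi^{-1}(C)$. Each $\pi^{-1}(C)$ is an $R$-subalgebra of $A$, and surjectivity of $\pi$ gives $\pi(\pi^{-1}(C))=C$, so the assignment is injective; hence $A/I$ has at most as many $R$-subalgebras as $A$. For (i)(c) I would first note that $IA$ is a two-sided ideal of $A$, because $\varphi(I)\subseteq\varphi(R)\subseteq Z(A)$, and apply (i)(b) to conclude that $A/IA$ is $R$-futile. Since $I$ acts as $0$ on $A/IA$, a subset of $A/IA$ is an $R$-subalgebra precisely when it is an $R/I$-subalgebra, so $A/IA$ is $R/I$-futile.

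For (ii), the first step is to record that a common ideal satisfies $I=IA$ (as $1\in A$), so that $A/I=A/IA$ and the ``only if'' direction is exactly (i)(c). For the converse I would observe that every $R$-subalgebra of $A$ contains $\varphi(R)$, hence contains $I$; the correspondence theorem then furnishes an inclusion-preserving bijection $B\mapsto B/I$ between the $R$-subalgebras of $A$ and the $R/I$-subalgebras of $A/I$. Equality of cardinalities yields the equivalence in both directions at once.

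Part (iii) is where I expect the only real, though still mild, obstacle. That $\varphi^{-1}(B)$ is an $R$-subalgebra of $A$ and that $B\mapsto\varphi^{-1}(B)$ respects inclusions are formal; the substance is injectivity. The plan is to show that $B$ is recovered from $\varphi^{-1}(B)$ by localizing: if $a/s\in B$, then multiplying by the central element $s/1\in S^{-1}R$ yields $a/1=\varphi(a)\in B$, whence $a\in\varphi^{-1}(B)$ and $a/s=(1/s)\varphi(a)$. This proves $B=S^{-1}R\cdot\varphi^{-1}(B)$, so $B$ is determined by its preimage and the map is injective. Futility of $A$ then bounds the number of $S^{-1}R$-subalgebras of $S^{-1}A$ by the finite number of $R$-subalgebras of $A$.
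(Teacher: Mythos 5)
Your proof is correct and follows essentially the same route as the paper: preimages under $A\to A/I$ for (i)(b), the identification of $R$- with $R/I$-subalgebras for (i)(c) and (ii), and the identity $S^{-1}\varphi^{-1}(B)=B$ for (iii). You have simply supplied the details that the paper leaves as ``obvious.''
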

\begin{proof}
i. Statement a is obvious. For statement b, let $\psi: A \to A/I$. Then the inverse of an $R$-algebra of $A/I$ is an $R$-algebra of $A$ containing
$I$. For statement c, notice that by b $A/IA$ is a futile $R$-algebra. Notice that an $R$-subalgebra in this case is
the same as an $R/I$ subalgebra.

ii. Obvious.

iii. This easily follows from $S^{-1}\varphi^{-1}(B)=B$.

\end{proof}

\begin{lemma} \label{7}
Let $n \in \Z_{\geq 1}$. Let $G$ be a group and for $1 \leq i \leq n$ let $g_i \in G$ and $H_i \subseteq G$ be subgroups. Suppose that
$G=\bigcup_{i=1}^n g_iH_i$. Then one has
$G=\bigcup_{i:\ (G:H_i)<\infty}g_iH_i$.  
\end{lemma}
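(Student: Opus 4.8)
The plan is to recognize this as B.~H.~Neumann's lemma on coverings of a group by cosets, and to prove it in two stages: first a representative-free \emph{weak form} asserting the existence of \emph{some} finite-index subgroup, and then the stated \emph{strong form} (that the infinite-index cosets are superfluous) via a reduction to a subgroup of finite index. The only elementary inputs I will use are that distinct left cosets of a fixed subgroup are disjoint, that a finite intersection of finite-index subgroups again has finite index, and the multiplicativity $(G:U)=(G:V)(V:U)$ for $U\subseteq V\subseteq G$.

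For the weak form, I would show that a group $G$ is never the union of finitely many cosets $g_iH_i$ in which every $(G:H_i)=\infty$, arguing by induction on the number $d$ of distinct subgroups occurring. If $d=1$, all $H_i$ equal a single $H$; since cosets of $H$ are pairwise disjoint and there are only finitely many terms, the cover forces $(G:H)<\infty$, contradicting infinite index, so this case cannot arise. For $d\ge 2$, fix one subgroup $H$ appearing and set $S=\{i:H_i=H\}$. As $(G:H)=\infty$ while $S$ is finite, there is a coset $yH$ meeting no $g_iH$ with $i\in S$, whence $yH\subseteq\bigcup_{i\notin S}g_iH_i$. Left-translating this inclusion shows that every coset of $H$, and in particular each $g_iH$ with $i\in S$, lies in a finite union of translates of the cosets $g_iH_i$ with $i\notin S$. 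Substituting these rewrites $G$ as a finite union of cosets of the subgroups $\{H_i:i\notin S\}$ only, i.e.\ with fewer distinct subgroups and still all of infinite index, contradicting the induction hypothesis.

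For the strong form, given $G=\bigcup_{i=1}^n g_iH_i$ I would put $F=\{i:(G:H_i)<\infty\}$ and $K=\bigcap_{i\in F}H_i$, a subgroup of finite index in $G$. Since $K\subseteq H_i$ for $i\in F$, each such $g_iH_i$, and hence the whole set $\bigcup_{i\in F}g_iH_i$, is a union of left cosets of $K$; therefore so is its complement $D=G\setminus\bigcup_{i\in F}g_iH_i$. Suppose $D\neq\emptyset$ and choose a coset $zK\subseteq D$. Then $zK\subseteq\bigcup_{i\notin F}g_iH_i$, and after translating by $z^{-1}$ and intersecting with $K$ one writes $K$ as a finite union of nonempty sets $K\cap (z^{-1}g_i)H_i$, each of which is a left coset of $K\cap H_i$ inside $K$. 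For $i\notin F$ the relations $(G:H_i)=\infty$ and $(G:K)<\infty$ force $(K:K\cap H_i)=\infty$; so $K$ would be covered by finitely many cosets of infinite-index subgroups, contradicting the weak form. Hence $D=\emptyset$, that is $G=\bigcup_{i\in F}g_iH_i$, which is exactly the assertion.

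The main obstacle is that the naive translation argument does not preserve the original representatives $g_i$: replacing the cosets of an infinite-index subgroup by translates does re-cover $G$, but with new representatives, so it cannot by itself show that the \emph{original} finite-index cosets already suffice. Isolating the contradiction inside the finite-index core $K$—where only the representative-free weak form is needed—is the device that circumvents this difficulty.
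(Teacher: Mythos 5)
Your proof is correct. The paper itself gives no argument for this lemma, citing only Robinson's Lemma 4.17, which is B.~H.~Neumann's covering lemma; your two-stage argument --- the representative-free weak form by induction on the number of distinct subgroups, followed by the reduction to the finite-index core $K=\bigcap_{i\in F}H_i$ to show the infinite-index cosets are superfluous --- is precisely the standard proof of that cited result, and all the steps (disjointness of cosets of a fixed subgroup, translating the missed coset $yH$, the identification of $K\cap z^{-1}g_iH_i$ as a coset of $K\cap H_i$, and the index computation forcing $(K:K\cap H_i)=\infty$) check out.
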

\begin{proof}
 See \cite{RO}, Lemma 4.17.
\end{proof}

The following lemma is very useful.

\begin{lemma} \label{1}
 Assume that $A$ is $R$-futile. Then there exists $n \in \Z_{\geq 0}$ and $\alpha_i \in A$ ($i=1,\ldots,n$) such that $A= \bigcup_{i=1}^n
R[\alpha_i]$ and $(A:R[\alpha_i])<\infty$. 
\end{lemma}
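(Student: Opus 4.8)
The plan is to reduce the statement to the covering lemma (Lemma~\ref{7}) applied to the additive group of $A$. First I would note that for each $\alpha \in A$ the monogenic algebra $R[\alpha]$ is itself an $R$-subalgebra of $A$. Since $A$ is $R$-futile it has only finitely many $R$-subalgebras, so in particular only finitely many of them are of the form $R[\alpha]$; choose elements $\alpha_1,\dots,\alpha_n \in A$ so that $R[\alpha_1],\dots,R[\alpha_n]$ is a complete list of these monogenic subalgebras. Because every $\alpha \in A$ lies in $R[\alpha]$, and $R[\alpha]$ coincides with one of the listed subalgebras, we obtain a covering $A = \bigcup_{i=1}^n R[\alpha_i]$.

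It remains to arrange the finite-index condition $(A:R[\alpha_i]) < \infty$, which is the delicate point, since an individual monogenic subalgebra may well have infinite index in $A$. Here I would forget the ring structure and regard $A$ as an abelian group under addition, with each $R[\alpha_i]$ an additive subgroup. Viewing the covering above as a covering by cosets with trivial representatives $g_i = 0$ and subgroups $H_i = R[\alpha_i]$, Lemma~\ref{7} applies and yields $A = \bigcup_{i:\ (A:R[\alpha_i])<\infty} R[\alpha_i]$. Discarding those indices $i$ for which $(A:R[\alpha_i])$ is infinite and renumbering the remaining ones gives a covering of $A$ by finitely many monogenic subalgebras, each of finite additive index, which is exactly the assertion.

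The only nontrivial input is Lemma~\ref{7}; everything else is bookkeeping. The step I expect to be the crux is precisely the passage from \emph{$A$ is covered by finitely many monogenic subalgebras} to \emph{$A$ is covered by the finite-index ones among them}, and this is not obvious without the covering lemma: a priori the finite-index subalgebras could fail to exhaust $A$. The covering lemma is exactly the tool that rules this out.
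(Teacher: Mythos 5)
Your proposal is correct and is essentially the paper's own argument: cover $A$ by the (finitely many, by futility) monogenic subalgebras $R[\alpha]$, then apply Lemma~\ref{7} to the additive group of $A$ with trivial coset representatives to discard the infinite-index members of the cover. The paper's proof is just a terser statement of exactly these two steps.
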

\begin{proof}
 Notice that $A=\bigcup_{a \in A} R[a]$, and as $A$ is a futile $R$-algebra only finitely many of the $R$-algebras $R[a]$ are needed. Use Lemma
\ref{7} to
finish the proof. 
\end{proof}

\begin{lemma} \label{17}
Let $I \subseteq A$ be a two sided ideal of finite order. Then $A$ is a futile $R$-algebra if and only if $A/I$
is a futile $R$-algebra. 
\end{lemma}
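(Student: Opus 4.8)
The forward implication is immediate from Lemma \ref{0}(i)(b). For the converse, assume that $A/I$ is a futile $R$-algebra and that $I$ is finite, and let me bound the number of $R$-subalgebras of $A$. Write $\pi\colon A \to A/I$ for the quotient map and send each $R$-subalgebra $B \subseteq A$ to the pair $(\pi(B),\, B \cap I)$. Since $A/I$ is futile there are only finitely many possibilities for the subalgebra $\pi(B) \subseteq A/I$, and since $I$ is finite there are only finitely many possibilities for the subset $B \cap I \subseteq I$. Hence it suffices to fix an $R$-subalgebra $C \subseteq A/I$ and a subset $J \subseteq I$ and to show that only finitely many $B$ satisfy $\pi(B)=C$ and $B\cap I = J$.

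So fix such $C$ and $J$. Every $B$ with $\pi(B)=C$ is contained in $A' \isbydef \pi^{-1}(C)$, an $R$-subalgebra of $A$ containing $I$ with $A'/I \cong C$, and $\pi(B)=C$ forces $B+I=A'$. Passing to $\bar A \isbydef A'/J$ with the finite two-sided ideal $\bar I \isbydef I/J$, the conditions $B+I=A'$ and $B\cap I = J$ say exactly that $\bar B \isbydef B/J$ is an $R$-subalgebra of $\bar A$ mapping isomorphically onto $\bar A/\bar I \cong A'/I \cong C$ under the projection $q\colon \bar A \to \bar A/\bar I$. Under this identification $\bar A/\bar I = C$, the assignment $B \mapsto (q|_{\bar B})^{-1}$ identifies the set of such $B$ with the set of $R$-algebra homomorphism sections $s\colon C \to \bar A$ of $q$, each $B$ being recovered as the preimage in $A'$ of $s(C)$.

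It remains to see that there are only finitely many such sections, and this is the only real point. Here I use that $C$ is finitely generated as an $R$-algebra: being an $R$-subalgebra of the futile algebra $A/I$, it is itself futile by Lemma \ref{0}(i)(a), so by Lemma \ref{1} we may write $C = \bigcup_{i=1}^n R[\gamma_i]$ with $(C:R[\gamma_1]) < \infty$; choosing finitely many elements $c_1,\ldots,c_m \in C$ representing the finite group $C/R[\gamma_1]$ shows that $\gamma_1,c_1,\ldots,c_m$ generate $C$ as an $R$-algebra. Any $R$-algebra homomorphism $s\colon C \to \bar A$ is determined by the values $s(\gamma_1),s(c_1),\ldots,s(c_m)$, and for $s$ to be a section each of these must lie in the corresponding fibre of $q$, a coset of $\bar I$ of cardinality $|\bar I| \le |I|$. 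Hence there are at most $|I|^{m+1}$ sections, completing the count.

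Thus the genuine content lies in the last paragraph, namely that a futile $R$-algebra is finitely generated as an $R$-algebra, which is exactly where Lemma \ref{1} is used. Everything else is bookkeeping with the quotient by the finite ideal $I$, and I do not expect the multiplicative structure of $I$ (for instance whether $I^2=0$) to enter at all beyond its finiteness.
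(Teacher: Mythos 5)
Your overall strategy is the same as the paper's: reduce to counting the subalgebras $B$ with a prescribed image $C$ in $A/I$, and get finiteness from the fact that $C$, as a subalgebra of the futile algebra $A/I$, is finitely generated as an $R$-algebra (Lemma \ref{0}ia and Lemma \ref{1}), so that $B$ is pinned down by finitely many lifts of generators, each varying in a coset of the finite set $I$. There is, however, one step that fails as written: the passage to $\bar A = A'/J$. For that quotient to be a ring you need $J=B\cap I$ to be a two-sided ideal of $A'=\pi^{-1}(C)=B+I$, but in general it is only an ideal of $B$: for $i\in I$ and $j\in J$ the product $ij$ lies in $I$ but need not lie in $B$. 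Concretely, take $R=\F_2$, $A=\F_2[x,y]/(x,y)^3$, $I=(x,y)$, and $B$ the image of $\F_2[x]$, so that $C=\F_2$, $A'=A$ and $J=\F_2 x+\F_2 x^2$; then $yx\notin J$, so $J$ is not an ideal of $A'$ and $A'/J$ is not a ring. With $\bar A$ unavailable, the entire ``sections of $q$'' framework in your second and third paragraphs does not get off the ground.

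The gap is local and the repair lands you exactly on the paper's proof: do not form the quotient. Fix generators $\gamma_1,\dots,\gamma_m$ of $C$ as you do, and for each $B$ with $\pi(B)=C$ and $B\cap I=J$ choose lifts $b_1,\dots,b_m\in B$ of the $\gamma_i$; each $b_i$ lies in the coset $\pi^{-1}(\gamma_i)$, a translate of $I$, so there are at most $|I|^m$ choices in total. Since $\pi(R[b_1,\dots,b_m])=C$ one has $R[b_1,\dots,b_m]\subseteq B\subseteq R[b_1,\dots,b_m]+I$, and for any $b\in B$ there is $r\in R[b_1,\dots,b_m]$ with $b-r\in B\cap I=J$; hence $B=R[b_1,\dots,b_m]+J$ is determined by the data $(C,J,b_1,\dots,b_m)$, of which there are finitely many. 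This is precisely the paper's computation (with its $d_i$ in place of your $b_i$), and it uses only that $J$ is a subset of the finite set $I$, never that it is an ideal of anything larger than $B$.
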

\begin{proof}
 $\implies$: See Lemma \ref{0}ib. 

$\limplies$: Let $C$ be an
$R$-subalgebra of $A$. Note that $(C+I)/I$ is an $R$-subalgebra of $A/I$. As $A/I$ is $R$-futile, it is enough to show that there are only finitely
many $R$-subalgebras of $A$ mapping to such a given $(C+I)/I$.
Suppose that for another such algebra $C'$ we have $C+I=C'+I$. As $A/I$ is $R$-futile, it follows that $(C+I)/I$ is finitely generated, say by $c_i+I$
($i=1,\ldots,n$, $c_i \in C$). There are $d_i \in C'$ such that $c_i \in d_i+I$. We have
\begin{eqnarray*}
R[d_i: i=1,\ldots,n] \subseteq C' \subseteq C+I=R[d_i: i=1,\ldots,n]+I.
\end{eqnarray*}
As $I$ is finite, given the $d_i$, this gives only finitely many options for $C'$. As $I$ is finite, there are finitely many options for the
$d_i$. Hence the result follows.
%As $A/I$ is $R$-futile, it follows that $C+I/I$ is a finitely
%generated $R/I$-algebra, say generated by some $c_j+I$ ($j=1,\ldots,n$) where $c_j \in C$. As $C+I=C'+I$ we conclude that for every $j$ there is
%$d_j \in I$, $c_j' \in C'$ with $c_j=c_j'+d_j$.
%As $I$ is finite, there are only finite many choices for $C'$
%and the result follows as $A/I$ is $R$-futile.
\end{proof}

\begin{proof}[Proof of Theorem \ref{24}]
i $\implies$ ii: Trivial.

i $\implies$ iii: Monogenic rings over commutative rings are commutative.

iii $\implies$ iv: Suppose that $\m \in \mathrm{Spec}(R)$ is such that $R/\m$ is finite. Then we have a finite non-commutative $R$-algebra
$\mathrm{Mat}_2(R/\m)$ which is a futile $R$-algebra.

iv $\implies$ i: Let $A$ be a futile $R$-algebra. Then write $A= \bigcup_{i=1}^n R[a_i]$ for $a_i \in A$
where
$(A:R[a_i])<\infty$ (Lemma \ref{1}) and $n \in \Z_{\geq 1}$. But then $A/R[a_1]$ is a finite $R$-module. The only finite $R$-module under our
assumptions is $0$. Hence we find $A/R[a_1]=0$ and $A=R[a_1]$. 

ii $\implies$ iv: Suppose that for some $\m \in \mathrm{Spec}(R)$ the ring $k=R/\m$ is finite of size $n$. Consider $k^{n+1}$, which is a finite
ring and hence a futile $R$-algebra. We claim that it is not monogenic. Indeed, otherwise there if an $f \in k[x]$ such that $k[x]/(f(x))
\cong k^{n+1}$, but $f$ cannot have
enough different linear factors to make this possible. 
\end{proof}

\begin{remark}
 In \cite{DO4} Proposition 3.1 it has been shown that any commutative $R$-algebra which is a futile $R$-algebra is monogenic if $R$ contains
an infinite set $S$
of units such that $u-v \in R^* \cup \{0\}$ for all $u,v \in S$. One easily sees that this condition implies that for all $\m \in
\mathrm{Spec}(R)$ quotient $R/\m$ is infinite. Hence this condition implies the condition in Theorem \ref{24}. The converse is not true. For example,
one can consider
the ring \[R=\F_2[X_n,Z_n: n \in \Z_{\geq 1}][\frac{U_n}{V_n}: n \in \Z_{\geq 1}].\] where
$U_n=1+Z_n^{2^n}-Z_n$ and $V_n=X_n^{2^n}-X_n$. One has $R^*=\{1\}$ in this case, but for any $\m \in \mathrm{Spec}(R)$ the quotient $R/\m$ is
infinite.
\end{remark}

\begin{lemma} \label{100}
 Let $A$ be a ring and assume that $(A:Z(A))<\infty$. Then the commutator ideal $[A,A]$ is finite.
\end{lemma}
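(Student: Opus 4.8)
The plan is to first reduce the problem to a finiteness statement about the additive group generated by commutators, and only afterwards to promote this to the two-sided ideal that they generate. Write $Z=Z(A)$ and $n=(A:Z)=\#(A/Z)$. Since $A/Z$ is a finite abelian group of order $n$, every element of $A/Z$ is killed by $n$, so $nA\subseteq Z$. Choose coset representatives $a_1,\dots,a_n$ of $Z$ in $A$, so that every element of $A$ has the form $a_i+z$ with $z\in Z$. For any $a,b\in A$, writing $a=a_i+z$ and $b=a_j+w$ with $z,w\in Z$ and using that central elements drop out of a commutator, one gets $[a,b]=[a_i,a_j]$. Hence the set $S=\{[a,b]:a,b\in A\}$ of all commutators is finite, with $\#S\le n^2$, and moreover each commutator is killed by $n$, since $n[a,b]=[na,b]=0$ as $na\in Z$.

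The key observation is that $Z$-scaling preserves commutators: for $z\in Z$ and $a,b\in A$ one has $z[a,b]=[za,b]$, which is again a commutator and so lies in $S$. Consequently the additive subgroup $M\subseteq A$ generated by $S$ is stable under multiplication by $Z$, i.e.\ it is a $Z$-submodule of $A$; being generated by the finitely many elements of $S$, each of additive order dividing $n$, it is in fact finite, with $nM=0$. Thus $M$ is a finite $Z$-module containing every commutator.

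It remains to control the two-sided ideal $[A,A]$, which (as $A$ has a unit) is the additive span of the products $xmy$ with $x,y\in A$ and $m\in M$. This is the step I expect to be the main obstacle, since multiplying commutators by arbitrary ring elements could a priori produce infinitely many new elements. The trick is again to absorb central scalars back into $M$: writing $x=a_i+z$ and $y=a_j+w$ and expanding, centrality gives
\[
xmy=a_i m a_j + a_i(wm) + (zm)a_j + (zw)m,
\]
and since $M$ is a $Z$-module each of $wm$, $zm$, and $(zw)m$ again lies in $M$. Therefore every $xmy$ lies in the additive subgroup $N$ generated by the finite set $\{a_i m' a_j\}\cup\{a_i m'\}\cup\{m' a_j\}\cup M$, where $1\le i,j\le n$ and $m'\in M$. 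Since conversely all of these generators lie in $[A,A]$, one obtains $[A,A]=N$.

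Finally, $N$ is generated by finitely many elements, each of additive order dividing $n$ (for instance $n\,a_i m' a_j=a_i(nm')a_j=0$), so $[A,A]=N$ is a finite abelian group, as desired. The whole argument turns on the two absorption facts $ZS\subseteq S$ and the expansion above, which together let one replace ``multiply by an arbitrary element of $A$'' with ``multiply by one of the finitely many representatives $a_i$.''
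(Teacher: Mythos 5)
Your proof is correct. Both you and the paper begin from the same observation — the commutator $[a,b]$ depends only on the classes of $a,b$ modulo the center, so the $Z(A)$-module generated by all commutators is finite — but you then diverge in how you pass from this module to the two-sided ideal. The paper packages the first step as the finiteness of the image $B$ of a map $A/Z(A)\otimes_{Z(A)}A/Z(A)\to A$, then tensors the exact sequence $0\to Z(A)\to A\to A/Z(A)\to 0$ with $B$ to conclude that $A\otimes_{Z(A)}B$, and hence the one-sided product $BA$, is finite; finally it invokes the identity $a[x,y]=[x,y]a+[a,[x,y]]$ to see that $BA=AB=[A,A]$, i.e.\ that one-sided multiplication already yields the two-sided ideal. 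You instead work directly with two-sided products $xmy$, choose coset representatives $a_1,\dots,a_n$ of $Z(A)$ in $A$, and absorb the central parts of $x$ and $y$ into the $Z(A)$-module $M$; this is an entirely elementary argument that avoids tensor products, exact sequences, and the commutator identity, at the cost of a slightly longer explicit expansion. Your argument also gives a crude explicit bound ($[A,A]$ is generated by finitely many elements of additive order dividing $n$, hence has order bounded in terms of $n$ alone), which the paper's formulation does not make visible. One small remark: the paper's stated convention reserves the word ``ring'' for commutative rings, so in the statement of this lemma $A$ should be read as a possibly noncommutative ring with unit, which is exactly the setting your proof assumes.
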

\begin{proof}
 We have a natural map
\begin{eqnarray*}
[\ ,\ ]: A/Z(A) \otimes_{Z(A)} A/Z(A) &\to& A \\
 \overline{a} \otimes \overline{b} &\mapsto& ab-ba. 
\end{eqnarray*}
As $A/Z(A)$ is finite, so is the left hand side and hence the image of this map. Call this image $B$. Consider the exact sequence
$0 \to Z(A) \to
A \to A/Z(A) \to 0$. Now tensor this sequence with $B$ over $Z(A)$ to obtain the exact sequence $B \to A \otimes_{Z(A)} B \to
A/Z(A) \otimes_{Z(A)}
B \to 0$. Both $B$ and $A/Z(A) \otimes_{Z(A)} B$ are finite, and hence so is $A \otimes_{Z(A)} B$. Notice that the map $A \otimes_{Z(A)} B \to BA$ is
surjective. Note that $BA=AB=[A,A]$ due to the identity $a[x,y]=[x,y]a+[a,[x,y]]$ and hence the ideal $[A,A]$ is finite. 
\end{proof}

\begin{lemma} \label{9} 
Suppose that $A$ is $R$-futile. Then $Z(A)$ is of finite index in $A$ and the commutator ideal, $[A,A]$, is finite.
\end{lemma}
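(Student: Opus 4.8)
The plan is to combine the covering of $A$ by finitely many commutative monogenic subalgebras of finite index (Lemma \ref{1}) with Lemma \ref{100}. The latter already reduces the finiteness of $[A,A]$ to the statement that $Z(A)$ has finite index in $A$, so the heart of the matter is to prove $(A:Z(A)) < \infty$; the second assertion will then follow for free.

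First I would apply Lemma \ref{1} to write $A = \bigcup_{i=1}^n R[\alpha_i]$ with $n \in \Z_{\geq 0}$, $\alpha_i \in A$, and $(A:R[\alpha_i]) < \infty$ for every $i$. The key observation is that each $R[\alpha_i]$ is commutative, being generated over the central subring $R$ by the single element $\alpha_i$. In particular $\alpha_i$ commutes with every element of $R[\alpha_i]$, so $R[\alpha_i]$ is contained in the centralizer $Z_A(\alpha_i) = \{a \in A : a\alpha_i = \alpha_i a\}$, which is an additive subgroup (indeed an $R$-subalgebra) of $A$.

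Next I would identify the center as a finite intersection of these centralizers. Since $A = \bigcup_i R[\alpha_i]$, every element of $A$ is an $R$-polynomial in some single $\alpha_i$, and therefore an element $z \in A$ is central if and only if it commutes with each generator $\alpha_i$; that is, $Z(A) = \bigcap_{i=1}^n Z_A(\alpha_i)$. Because $R[\alpha_i] \subseteq Z_A(\alpha_i)$, the index satisfies $(A:Z_A(\alpha_i)) \le (A:R[\alpha_i]) < \infty$ for each $i$. A finite intersection of finite-index additive subgroups again has finite index, as one sees from the injection of $A/Z(A)$ into the finite product $\prod_{i=1}^n A/Z_A(\alpha_i)$, so $(A:Z(A)) < \infty$.

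Finally, with $(A:Z(A)) < \infty$ in hand, Lemma \ref{100} immediately yields that the commutator ideal $[A,A]$ is finite, completing the proof. The only genuine content is the reduction to centralizers and the remark that a commutative monogenic subalgebra of finite index forces the corresponding centralizer to have finite index; I expect this passage from ``subalgebras'' to ``centralizers'' to be the one point needing care, since everything afterward is the routine finite-index-intersection argument combined with the already-established Lemma \ref{100}.
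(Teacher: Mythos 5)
Your proof is correct and takes essentially the same route as the paper: the paper notes directly that $\bigcap_{i=1}^n R[\alpha_i]$ is a finite-index additive subgroup contained in $Z(A)$, whereas you pass through the centralizers $Z_A(\alpha_i)\supseteq R[\alpha_i]$ and identify $Z(A)$ as their intersection, an immaterial variation. In both cases the finiteness of $[A,A]$ is then immediate from Lemma~\ref{100}.
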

\begin{proof}
 Write $A=\bigcup_{i=1}^n R[\alpha_i]$ where $\alpha_i \in A$ and $(A:R[\alpha_i])<\infty$ (Lemma \ref{1}). Notice that $\bigcap_{i=1}^n R[a_i]
\subseteq Z(A)$ and that this is of finite index in $A$. Now apply Lemma \ref{100}.
\end{proof}

\begin{proof}[Proof Theorem \ref{130}]
Note that $[A,A]$ is finite by Lemma \ref{9}. Apply Lemma \ref{17}. 
\end{proof}

\begin{lemma}
Then the following statements are equivalent:
\begin{enumerate}
\item
for every ring morphism $R \to R'$ the $R'$-algebra $A \otimes_R R'$ is $R'$-futile;
\item
$A$ is a quotient of $R$.
\end{enumerate}
\end{lemma}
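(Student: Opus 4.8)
The plan is to prove both implications, handling the trivial direction first.

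For (ii) $\implies$ (i): if $A$ is a quotient of $R$, say $A\cong R/\mathfrak a$, then for every ring morphism $R\to R'$ we get $A\otimes_R R'\cong R'/\mathfrak a R'$, which is again a quotient of $R'$. In a quotient of $R'$ the structure morphism is surjective, so any $R'$-subalgebra contains the image of $R'$, hence is the whole algebra. Thus $A\otimes_R R'$ has exactly one $R'$-subalgebra and is $R'$-futile.

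For (i) $\implies$ (ii) I would argue by contraposition. Set $V=A/\varphi(R)$ and assume $V\neq 0$; the goal is to exhibit a base change with infinitely many subalgebras. The key device is a dual-numbers construction: for any ring morphism $R\to S$ write $A_S=A\otimes_R S$ with structure map $\psi$, and consider the base change $A_S\otimes_S S[\epsilon]/(\epsilon^2)=A_S[\epsilon]/(\epsilon^2)$ of $A$. For every $S$-submodule $N\subseteq A_S$ with $\psi(S)\subseteq N$, the set $B_N=\psi(S)+N\epsilon$ is an $S[\epsilon]/(\epsilon^2)$-subalgebra: it contains $1$ and the image of the structure ring, and it is closed under multiplication because $\psi(S)$ is central in $A_S$ and $\epsilon^2=0$ (so $(b_1+c_1\epsilon)(b_2+c_2\epsilon)=b_1b_2+(b_1c_2+c_1b_2)\epsilon$ with $b_i\in\psi(S)$, $c_i\in N$, and $b_ic_j=\psi(s)c_j\in N$). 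Since $B_N\cap A_S\epsilon=N\epsilon$ recovers $N$, distinct $N$ give distinct $B_N$. As $A_S/\psi(S)=V\otimes_R S$ (right-exactness of $\otimes$ applied to $\varphi(R)\to A\to V\to 0$), futility of this base change forces $V\otimes_R S$ to have only finitely many $S$-submodules.

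Now I would specialize $S=R[t]$, so that $V\otimes_R S=V[t]$ must have finitely many $R[t]$-submodules. But this fails as soon as $V\neq 0$: choosing $v\neq 0$ and putting $I=\Ann_R(v)\neq R$, one has $R[t]v\cong (R/I)[t]$, and the submodules $t^nR[t]v$ form a strictly descending chain, since the degree-$n$ component of $t^nv$ is the nonzero $v$, which cannot lie in $t^{n+1}R[t]v$. Hence $V[t]$ has infinitely many $R[t]$-submodules, contradicting futility of the base change $A\otimes_R R[t,\epsilon]/(\epsilon^2)$. Therefore $V=0$, i.e.\ $\varphi$ is surjective and $A$ is a quotient of $R$. (Equivalently, one may bundle the two steps into the single base change $R'=R[t,\epsilon]/(\epsilon^2)$.)

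The real content, and the step I expect to require the most care, is isolating the correct invariant: the $R$-module $V=A/\varphi(R)$, together with the observation that one well-chosen base change simultaneously (a) turns $S$-submodules of $A_S$ into genuine subalgebras and (b) manufactures infinitely many submodules from any nonzero $V$. The fussy points to verify are the identification $A_S/\psi(S)=V\otimes_R S$ and that each $B_N$ really is a subalgebra in the possibly noncommutative setting; note that no commutativity of $A$ is needed, precisely because $\varphi(R)$ lies in $Z(A)$, which is exactly what makes the multiplication in $B_N$ close up.
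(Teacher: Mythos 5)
Your proof is correct. The easy direction (ii) $\Rightarrow$ (i) is the same as the paper's. For (i) $\Rightarrow$ (ii) the paper also argues by contraposition via a polynomial base change, but it gets by with the smaller ring $R'=R[x]$: given $a\in A\setminus\varphi(R)$, it exhibits the infinitely many $R[x]$-subalgebras $R[x]+ax^iA[x]$ of $A[x]$ directly (one checks that $ax^iA[x]$ is multiplicatively closed and stable under the central $R[x]$, and the subalgebras are distinguished by the least degree in which their image in $(A/\varphi(R))[x]$ is nonzero). Your route instead inserts the square-zero extension $S[\epsilon]/(\epsilon^2)$ to linearize the problem, turning $S$-submodules of $A_S/\psi(S)\cong V\otimes_R S$ into subalgebras $\psi(S)+N\epsilon$, and then manufactures infinitely many submodules of $V[t]$ via the strictly descending chain $t^nR[t]v$; the identification $A_S/\psi(S)\cong V\otimes_R S$ and the subalgebra verification (which indeed only uses that $\varphi(R)$ is central, not commutativity of $A$) both check out. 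The two arguments ultimately produce the same kind of chain indexed by powers of the polynomial variable; yours costs a slightly larger base ring $R[t,\epsilon]/(\epsilon^2)$ but buys a cleaner intermediate statement (futility of every base change forces $V\otimes_R S$ to have finitely many $S$-submodules for all $S$, hence $V=0$) and makes the closure computations mechanical, whereas the paper's is more economical but leaves the reader to verify that $R[x]+ax^iA[x]$ is a ring and that these are pairwise distinct.
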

\begin{proof}
i $\implies$ ii: Take $R'=R[x]$. Hence we are given that $A[x]=A \otimes_R R[x]$ is a futile $R[x]$-algebra. Suppose that we have an $a \in A \setminus R$. For $i \in \Z_{\geq 1}$ consider the rings $B=R[x]+ax^i A[x]$. This gives us
infinitely many $R[x]$-subalgebras, which contradicts the futility. 

ii $\implies$ i: If $A=R/I$, then $A \otimes_R R' = R'/IR'$, which is obviously $R'$-futile.
\end{proof}

\section{Commutative case} \label{66}

In this section we summarize the theory of commutative futile $R$-algebras as developed in \cite{DO5}. We have adapted some of the statements 
in order to make them easier to read. 
In \cite{DO5}, and some other articles, one says shat a commutative $R$-algebra $A$ with $A \supseteq R$ satisfies FIP if it is $R$-futile.

In this section we let $S$ be a commutative futile $R$-algebra with $R \subseteq S$. The latter is not really a
restriction, because we can replace $R$ by its image in $S$. 

We put $\tilde{R}$ for the integral closure of $R$ in $S$.
\begin{theorem}
The algebra $S$ is $R$-futile if and only if $\tilde{R}$ is $R$-futile and
$S$ is $\tilde{R}$-futile. 
\end{theorem}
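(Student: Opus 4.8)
The forward implication is straightforward. Since $\tilde{R}$ is an $R$-subalgebra of $S$, it is $R$-futile by Lemma \ref{0}(i)(a). Moreover every $\tilde{R}$-subalgebra of $S$ is in particular an $R$-subalgebra (it contains $\tilde{R}\supseteq R$ and is a subring), so the set of $\tilde{R}$-subalgebras of $S$ injects into the set of $R$-subalgebras; as the latter is finite, $S$ is $\tilde{R}$-futile.

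For the converse, assume that $\tilde{R}$ is $R$-futile and that $S$ is $\tilde{R}$-futile. I would study the map $\Phi$ sending an $R$-subalgebra $B$ of $S$ to the pair $B\mapsto (B\cap\tilde{R},\ \tilde{R}B)$, where $\tilde{R}B$ is the $\tilde{R}$-subalgebra of $S$ generated by $B$. Its first coordinate is an $R$-subalgebra of $\tilde{R}$ and its second an $\tilde{R}$-subalgebra of $S$, so by the two hypotheses the target of $\Phi$ is finite; it therefore suffices to prove that $\Phi$ has finite fibres. First I would record that $\tilde{R}$ is module-finite over $R$: by Lemma \ref{1} write $\tilde{R}=\bigcup_{i=1}^n R[\gamma_i]$ with $(\tilde{R}:R[\gamma_i])<\infty$; each $\gamma_i$ is integral over $R$, so $R[\gamma_1]$ is a finite $R$-module, and the finite index forces $\tilde{R}$ to be a finite $R$-module. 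Fixing $C=B\cap\tilde{R}$, it follows that $\tilde{R}$ is module-finite over $C$, say $\tilde{R}=\sum_{j=1}^s C\nu_j$ with $\nu_1=1$ and structure constants $\nu_j\nu_k=\sum_l c^l_{jk}\nu_l\in\sum_l C\nu_l$ depending only on $C$.

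Now fix a value $(C,D)$ in the image of $\Phi$. Every $B$ in this fibre satisfies $C\subseteq B\subseteq D$, $B\cap\tilde{R}=C$ and $D=\tilde{R}B=\sum_{j=1}^s B\nu_j$, where the $\nu_j$ and the relations above are the \emph{same} for all $B$ in the fibre; thus $D$ is module-finite over each such $B$ in a uniform way and the surjection $\tilde{R}\otimes_C B\twoheadrightarrow D$ realises $D$ as a fixed quotient pattern. To control $B$ inside $D$ I would introduce the conductor $\mathfrak{c}=\{\,d\in D : d\tilde{R}\subseteq B\,\}$, which is an ideal of $D$ contained in $B$: taking the $\nu_1=1$ component gives $\mathfrak{c}\subseteq B$, while $\mathfrak{c}$ is visibly stable under multiplication by $\tilde{R}$ and by $B$, hence under $D=\tilde{R}B$. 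The plan is to show that $B$ is determined, up to finitely many choices, by the pair $(C,D)$ together with the residue of $\mathfrak{c}$.

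The main obstacle is exactly this last step: bounding how many $C$-subalgebras $B$ of $D$ are transverse to $\tilde{R}$ (meaning $B\cap\tilde{R}=C$) and generate $D$ over $\tilde{R}$. The danger is an infinite ``non-integral'' family of such intermediate rings, and here I expect to have to use that $\tilde{R}$ is integrally closed in $D$ (no element of $D\setminus\tilde{R}$ is integral over $\tilde{R}$, since such an element would be integral over $R$ and hence lie in $\tilde{R}$) together with the fact that $\tilde{R}\subseteq D$ is futile, so that $D$ has only finitely many $\tilde{R}$-subalgebras. Concretely, I would try to pin $B$ down by its interaction with the finitely many monogenic pieces $\tilde{R}[\beta]$ of finite index furnished by Lemma \ref{1} for $S$ over $\tilde{R}$, reducing the problem to a finite situation in which Lemma \ref{17} applies; proving that a single such reduction works uniformly across the whole fibre is the delicate point on which the argument turns.
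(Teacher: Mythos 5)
Your forward implication is correct and complete: $\tilde{R}$ is $R$-futile as an $R$-subalgebra of $S$ (Lemma \ref{0}.i.a), and every $\tilde{R}$-subalgebra of $S$ is an $R$-subalgebra, so $S$ is $\tilde{R}$-futile. Note, though, that the paper itself gives no argument for this theorem at all: it simply cites Theorem 3.13 of \cite{DO5}, so the only substantive comparison to be made is between your sketch and the argument in that reference.

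For the converse there is a genuine gap, and it sits exactly where you say it does. Reducing to the finiteness of the fibres of $\Phi\colon B\mapsto(B\cap\tilde{R},\,\tilde{R}B)$ is a sensible strategy (and the preliminary observation that $\tilde{R}$ is module-finite over $R$ via Lemma \ref{1} is fine), but fibre-finiteness is essentially the whole content of the hard direction, and nothing in your sketch establishes it. The conductor $\mathfrak{c}=\{d\in D: d\tilde{R}\subseteq B\}$ is well defined, but a priori there are infinitely many candidate ideals of $D$ that could arise as $\mathfrak{c}$, and even for a fixed $\mathfrak{c}$ there is no argument bounding the number of $B$ with the prescribed pair $(C,D)$ and conductor; ``the residue of $\mathfrak{c}$'' is not a finite invariant without further work. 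The proof in \cite{DO5} does not proceed by counting fibres of this map in the abstract; it exploits the structure theory of the two pieces --- in particular that an integrally closed FIP extension is locally a localization $S_{\m}=(R_{\m})_{\pa}$ with $R_{\m}/\pa$ a finite-dimensional valuation ring (the content of Theorem \ref{901} here), which forces the non-integral part of any intermediate ring to move in a chain and makes the interaction between the integral and integrally closed parts rigid. Without importing some version of that structure (or an injectivity statement for $\Phi$, which is what \cite{DO5} effectively proves in this situation), the delicate point you identify remains unproved, so the converse direction of your argument is incomplete.
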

\begin{proof}
 See \cite{DO5}, Theorem 3.13. 
\end{proof}

Hence our problem reduces to two cases: the case where $R \subseteq S$ is integral and the case where $R=\tilde{R}$. For an $R$-module $M$ we
put $\mathrm{MSupp}(M)=\{\m \in \mathrm{MaxSpec}(R): M_{\m} \neq 0\}$. For an inclusion of rings $A \subseteq A'$ we put $(A:A')=\{ x \in A':
xA' \subseteq A\}$, which is the largest common ideal of both $A$ and $A'$. 

\begin{theorem} \label{901}
 Suppose that $R=\tilde{R} \subsetneq S$. Then $S$ is $R$-futile if and only if the following properties hold:
\begin{enumerate}
 \item $\mathrm{MSupp}_{R}(S/R)$ is a finite set;
 \item for every $\m \in \mathrm{MSupp}_{R}(S/R)$, the ideal $\pa=(R:S)_{\m} \subseteq R_{\m} $ is prime, $S_{\m}=(R_{\m})_{\pa}$ and $R_{\m}/\pa$ is
a valuation ring of finite Krull-dimension. 
\end{enumerate}
\end{theorem}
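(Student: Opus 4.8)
The plan is to prove both implications by localizing at the maximal ideals of $R$ and analysing one local ring at a time, the real content being a structure theorem for the local pieces in terms of valuation rings. Throughout I would use that integral closure commutes with localization, so $R_{\m}$ is integrally closed in $S_{\m}$ for every $\m$, and that by Lemma \ref{0}iii futility descends to each localization $S_{\m}$ over $R_{\m}$. I also record that $\mathrm{MSupp}_R(S/R)=\{\m:R_{\m}\subsetneq S_{\m}\}$, since $(S/R)_{\m}=S_{\m}/R_{\m}$.

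First I would treat the direction ``(i)--(ii) $\Rightarrow$ futile''. Write $\mathrm{MSupp}_R(S/R)=\{\m_1,\dots,\m_k\}$, so $R_{\m}=S_{\m}$ for every maximal ideal $\m$ outside this set. Any intermediate ring $R\subseteq B\subseteq S$ is an $R$-submodule of $S$, hence is recovered from its localizations as $B=\{s\in S:s/1\in B_{\m}\text{ for all }\m\}$; for $\m\notin\{\m_1,\dots,\m_k\}$ the localization $B_{\m}$ is forced to equal $R_{\m}=S_{\m}$. Thus $B\mapsto(B_{\m_i})_{i=1}^k$ is injective, and it suffices to bound the intermediate rings of each local extension $R_{\m_i}\subseteq S_{\m_i}$. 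Fixing such an index and writing $(R,\m)$ for $R_{\m_i}$, the hypothesis gives $S=R_{\pa}$ with $\pa=(R:S)$ a common prime ideal and $V:=R/\pa$ a valuation ring of finite Krull dimension $d$. Every intermediate ring contains $\pa$, and since $\pa$ is an ideal of $S$ these rings biject with the rings between $V$ and $S/\pa=R_{\pa}/\pa R_{\pa}=Q(V)$; the overrings of the valuation ring $V$ inside $Q(V)$ are exactly the localizations at its primes, and there are $d+1$ of them. Hence each local extension is futile and so is $S$.

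For the converse I would again reduce to the local case, but first establish finiteness of $\mathrm{MSupp}_R(S/R)$. If it were infinite, the natural family $B_F:=\{s\in S:s/1\in R_{\m}\text{ for all }\m\notin F\}$, indexed by the finite subsets $F$ of the support, gives $R$-subalgebras of $S$; distinguishing them by their localizations on the symmetric difference produces infinitely many, contradicting futility. Making the $B_F$ provably distinct requires exhibiting, at each $\m$ in the support, an element of $S$ supported only at $\m$, and this locally-supported-element point is the one technical input of this step. Having reduced to $(R,\m)$ local with $R$ integrally closed in $S$ and $S$ futile, I pass to the conductor $\pa=(R:S)$ and the extension $R/\pa\subseteq S/\pa$, which by standard conductor arguments is again integrally closed, futile, and now has zero conductor; the goal is to deduce that $R/\pa$ is a valuation domain with $S/\pa=Q(R/\pa)$ and $\pa$ prime, whence $S=R_{\pa}$ of finite Krull dimension.

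The main obstacle is precisely this local structure theorem, and it is where the integrally-closed hypothesis does all the work. The key lemma I would isolate is: a local domain $V$ that is integrally closed in $K=Q(V)$ and for which $V\subseteq K$ is futile must be a valuation ring of finite Krull dimension. To prove it, suppose $V$ is not a valuation ring; then there is $x\in K^\times$ with $x\notin V$ and $x^{-1}\notin V$, and since $V$ is integrally closed neither $x$ nor $x^{-1}$ is integral over $V$. I expect the overrings generated by successive powers of $x$ (equivalently, by the non-comparable pair of ideals witnessing the failure of the valuation condition) to form an infinite family of distinct intermediate rings, contradicting futility. Once $V$ is known to be a valuation ring, its overrings are exactly the chain of localizations at its primes, so futility forces the primes to form a finite chain, i.e.\ finite Krull dimension. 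The remaining bookkeeping -- that the conductor is prime, that $S/\pa=Q(R/\pa)$ so that $S=R_{\pa}$, and that integral closedness and futility survive passage to $R/\pa$ -- I would dispatch using Lemma \ref{0} together with routine conductor manipulations, the genuinely delicate point being the valuation characterization above.
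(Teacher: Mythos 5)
Your proposal takes a genuinely different route from the paper: the paper does not reprove this result at all, but cites \cite{DO5}, Theorem 6.16, and the entire content of its proof is the reconciliation of the two formulations, namely that the prime $\pa$ occurring in the cited theorem must coincide with the localized conductor $(R:S)_{\m}$ (one direction because the conductor localizes and is automatically an $S_{\m}$-ideal, the other because $\pa=S_{\m}\pa$ forces $\pa$ to be a maximal ideal of $S_{\m}$ contained in the proper ideal $(R_{\m}:S_{\m})$). You instead attempt a self-contained proof. Your ``if'' direction is correct and essentially complete: an intermediate ring is determined by its localizations, which are trivial outside the finite support, and modulo the common ideal $\pa$ each local piece becomes the extension of a finite-dimensional valuation ring inside its quotient field, whose overrings form a finite chain of localizations.

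The ``only if'' direction, however, has genuine gaps at exactly the points that constitute the content of the cited theorem. First, finiteness of $\mathrm{MSupp}_R(S/R)$: your $B_F$ construction separates the $B_F$ only if, for each $\m$ in the support, there is an element of $S$ lying in $R_{\m'}$ for all $\m'\neq\m$ but not in $R_{\m}$; such locally supported elements need not exist, and you give no way to produce them (the proof in the literature goes through minimal extensions and their crucial maximal ideals instead). Second, before your key lemma can even be invoked you must know that $\pa=(R_\m:S_\m)$ is prime and that $S_{\m}/\pa$ is a domain equal to $Q(R_{\m}/\pa)$; this is not a routine conductor manipulation but a substantial structural step (a priori $S_{\m}$ need not embed in $Q(R_{\m})$ at all). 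Third, the key lemma itself --- a local domain integrally closed and futile in its quotient field is a valuation ring of finite dimension --- is true, but the mechanism you propose does not obviously work: for $x$ with $x,x^{-1}\notin V$ the chain $V\subseteq\cdots\subseteq V[x^4]\subseteq V[x^2]\subseteq V[x]$ can stabilize without contradiction, since $x\in V[x^2]$ only yields a relation of the form $x\bigl(1-\sum_i a_ix^{2i-1}\bigr)=a_0$, which is not an integral equation and is not excluded by integral closedness. So the three claims you flag as ``technical inputs'' or ``routine'' are precisely the theorem being cited, and none of them is established by the sketches given.
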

\begin{proof}
Theorem 6.16 and the references in its proof from \cite{DO5} state the following. The algebra $S$ is $R$-futile iff $\mathrm{MSupp}_{R}(S/R)$ is a
finite set and for every $\m \in \mathrm{MSupp}_{R}(S/R)$ there exists $\pa \in \Spec(R_{\m})$ such that $S_{\m}=(R_{\m})_{\pa}$, $\pa=S_{\m} \pa$ and
$R_{\m}/\pa$ is a valuation ring of finite Krull-dimension.

We show that our statement is equivalent to this theorem. First assume our statement (i and ii). Given $\m \in \mathrm{MSupp}_{R}(S/R)$, consider
$\pa=(R:S)_{\m}$. We just need to show that $\pa=S_{\m} \pa$. But $(R:S)_{\m}=(R_{\m}:S_{\m})$ and hence $\pa=S_{\m} \pa$.

Conversely, given $\m \in \mathrm{MSupp}_{R}(S/R)$, suppose that $\pa \in \Spec(R_{\m})$ satisfies the assumptions as in Theorem 6.16 from \cite{DO5}.
As $\pa=S_{\m} \pa$, $\pa$ is an ideal in $S$ and we have $\pa \subseteq (R_{\m}:S_{\m})=(R:S)_{\m}$. As
$Q(R_{\m}/\pa)=(R_{\m})_{\pa}/\pa=S_{\m}/\pa S_{\m}$, it follows that $\pa$ is a maximal ideal of $S$. As $(R:S)_{\m} \subsetneq R_{\m}$ by
assumption, the result follows.
\end{proof}

This settles the first case. For the integral part, we will reduce to the case where $R$ is local artinian. We first need
two lemmas.

\begin{lemma} \label{113}
Let $f: A \to B$ be a morphism of rings which makes $B$ into a finitely generated $A$-module. Let $M$ be a $B$-module. Then
$\mathrm{length}_B(M)<\infty$ implies
$\mathrm{length}_A(M)<\infty$. 
\end{lemma}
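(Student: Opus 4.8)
The plan is to pass to a composition series of $M$ as a $B$-module and reduce, by additivity of length, to the case of a single simple $B$-module. Since $\mathrm{length}_B(M)<\infty$, there is a finite chain $0=M_0 \subset M_1 \subset \cdots \subset M_n = M$ of $B$-submodules whose successive quotients $M_i/M_{i-1}$ are simple $B$-modules. Each $M_i$ is in particular an $A$-submodule, because $A$ acts through $B$, and $A$-length is additive on short exact sequences, so $\mathrm{length}_A(M)=\sum_{i=1}^n \mathrm{length}_A(M_i/M_{i-1})$. Hence it suffices to show that every simple $B$-module has finite length as an $A$-module.

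As $B$ is commutative, a simple $B$-module has the form $\kappa=B/\mathfrak{m}$ for a maximal ideal $\mathfrak{m}\subseteq B$, and $\kappa$ is a field. Being a cyclic $B$-module, $\kappa$ is a quotient of $B$ as an $A$-module, hence finitely generated over $A$ (recall $B$ is finitely generated as an $A$-module). The $A$-action on $\kappa$ factors through $A/\mathfrak{p}$, where $\mathfrak{p}=f^{-1}(\mathfrak{m})$, and $A/\mathfrak{p}$ embeds as a subring of the field $\kappa$.

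The key step, and the only one with real content, is to promote $A/\mathfrak{p}$ to a field. Since $\kappa$ is a finitely generated module over the subring $A/\mathfrak{p}$, every element of $\kappa$ is integral over $A/\mathfrak{p}$; and an integral extension of domains whose larger ring is a field forces the smaller ring to be a field as well (clear denominators in a monic integral relation for $d^{-1}$ with $d \in A/\mathfrak{p}$ nonzero). Thus $\mathfrak{p}$ is maximal and $A/\mathfrak{p}$ is a field, over which $\kappa$ is a finitely generated module and therefore a finite-dimensional vector space. Consequently $\mathrm{length}_A(\kappa)=\dim_{A/\mathfrak{p}}(\kappa)<\infty$, which combined with the additivity above gives $\mathrm{length}_A(M)<\infty$ and completes the proof.
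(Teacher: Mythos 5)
Your proof is correct and follows essentially the same route as the paper: reduce via a composition series to simple $B$-modules $B/\mathfrak{m}$, then observe that $B/\mathfrak{m}$ is a finite field extension of $A/f^{-1}(\mathfrak{m})$ because the latter, being a domain over which the field $B/\mathfrak{m}$ is integral, must itself be a field. The paper simply cites Corollary 5.8 of Atiyah--Macdonald for this last point, whereas you prove it inline.
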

\begin{proof}
Let $\m \subset B$ be a maximal ideal. Then we need to show that $\mathrm{length}_A(B/\m)=\mathrm{length}_{A/f^{-1}(\m)}(B/\m)$ is finite. As $B/\m$
is a finite field extension of $A/f^{-1}(\m)$ (Corollary 5.8 from \cite{AT}), the result follows. 
\end{proof}

\begin{lemma} \label{114}
 Let $R$ be a ring and let $M$ be an $R$-module. Then $\mathrm{length}_R(M)<\infty$ implies $R/\mathrm{Ann}_R(M)$ is artinian. The converse is true
if $M$ is finitely generated as $R$-module.
\end{lemma}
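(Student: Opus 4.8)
The plan is to handle the two implications separately, in both cases passing freely between the length of $M$ over $R$ and over the quotient ring $S=R/\mathrm{Ann}_R(M)$. These agree because the $R$-module structure on any $S$-module factors through $S$, so the lattice of $R$-submodules of such a module coincides with its lattice of $S$-submodules; in particular a ring $S$ is Artinian precisely when it has finite length as a module over itself, and this equivalence is the single nontrivial input I would use.

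For the forward implication, suppose $\mathrm{length}_R(M)<\infty$. Then $M$ is finitely generated, say by $m_1,\dots,m_k$. For each $i$ the cyclic submodule $Rm_i\subseteq M$ is isomorphic to $R/\mathrm{Ann}_R(m_i)$, and as a submodule of a finite-length module it has finite length. Since $\mathrm{Ann}_R(M)=\bigcap_{i=1}^k \mathrm{Ann}_R(m_i)$, the diagonal map furnishes an injection of $R$-modules $R/\mathrm{Ann}_R(M)\hookrightarrow \prod_{i=1}^k R/\mathrm{Ann}_R(m_i)$. The target is a finite product of finite-length $R$-modules, hence has finite length, so $R/\mathrm{Ann}_R(M)$ has finite length as an $R$-module, equivalently as a module over itself, and is therefore Artinian. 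I would deliberately avoid arguing instead that $R/\mathrm{Ann}_R(M)$ is a subring of an Artinian ring, since subrings of Artinian rings need not be Artinian; the length argument sidesteps this difficulty.

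For the converse, assume $M$ is finitely generated and $S=R/\mathrm{Ann}_R(M)$ is Artinian. Then $M$ is a finitely generated $S$-module, hence a quotient of $S^m$ for some $m$. By the structure theory of Artinian rings (see \cite{AT}, Chapter 8), $S$ has finite length as a module over itself, so $S^m$, and therefore its quotient $M$, has finite length over $S$. Since $\mathrm{length}_R(M)=\mathrm{length}_S(M)$, we conclude $\mathrm{length}_R(M)<\infty$.

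The routine part is the bookkeeping with annihilators and the comparison of lengths over $R$ and over $S$. The one place where something genuine is needed, and the step I expect to be the main obstacle, is the equivalence for a commutative ring between being Artinian and having finite length over itself; this is what powers both directions, and it is precisely the point I would lean on Atiyah--Macdonald to supply rather than reprove.
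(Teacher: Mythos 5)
Your proof is correct and takes essentially the same route as the paper: the forward direction embeds $R/\mathrm{Ann}_R(M)$ diagonally into a finite product of cyclic submodules of $M$ (the paper phrases this as an embedding into $M^n$), and the converse uses a surjection from a finite power of $R/\mathrm{Ann}_R(M)$, with both arguments resting on the equivalence between a ring being Artinian and having finite length over itself (the paper cites Eisenbud, Theorem 2.14, where you cite Atiyah--Macdonald). No gaps; the only difference is cosmetic.
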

\begin{proof}
We will prove the first statement. It follows that $M$ is finitely generated and we have an embedding $R/\mathrm{Ann}_R(M) \to M^n$ for some $n$.
Hence $R/\mathrm{Ann}_R(M)$ has finite length and the result follows from \cite{EI} Theorem 2.14. 

For the converse, we have a surjective map $(R/\mathrm{Ann}_R(M))^n \to M$ for some $n \in \Z_{\geq 0}$ where the domain is of finite length. 
\end{proof}

\begin{theorem} \label{111}
Let $R \subsetneq S$ be integral. Then $S$ is $R$-futile if and only if $R/(S:R)$ is artinian and $S/(S:R)$ is $R/(S:R)$-futile.
\end{theorem}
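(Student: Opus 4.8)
The plan is to peel off the futility equivalence formally and then isolate the genuinely new content, the artinianity of $R/(S:R)$. Write $\mathfrak{c}=(S:R)$ for the conductor; by definition $\mathfrak{c}=\{x\in S: xS\subseteq R\}$ is the largest common ideal of $R$ and $S$, and since $1\in S$ one has $\mathfrak{c}\subseteq R$ and $\mathfrak{c}=\Ann_R(S/R)$. As $\mathfrak{c}$ is a common ideal of $R$ and $S$, Lemma \ref{0}(ii) gives that $S$ is $R$-futile if and only if $S/\mathfrak{c}$ is $R/\mathfrak{c}$-futile. This already settles both futility assertions in the theorem: in the backward direction the hypothesis that $S/\mathfrak{c}$ is $R/\mathfrak{c}$-futile yields directly that $S$ is $R$-futile, so nothing further is needed there, and in the forward direction it produces the $R/\mathfrak{c}$-futility of $S/\mathfrak{c}$. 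Thus the only remaining implication is: if $S$ is $R$-futile, with $R\subsetneq S$ integral, then $R/\mathfrak{c}$ is artinian.

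First I would show that $S$ is a finitely generated $R$-module. By Lemma \ref{1} one may write $S=\bigcup_{i=1}^n R[\alpha_i]$, hence $S=R[\alpha_1,\dots,\alpha_n]$; since each $\alpha_i$ is integral over $R$, this is a finite integral extension, so $S$, and therefore $V:=S/R$, is a finitely generated $R$-module. Because $\mathfrak{c}=\Ann_R(V)$, Lemma \ref{114} shows it suffices to prove that $\mathrm{length}_R(V)<\infty$; then $R/\Ann_R(V)=R/\mathfrak{c}$ is artinian, which is the desired conclusion.

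The heart of the argument is this finiteness of length, and this is where the main work lies. The idea is to read off intermediate rings from ideals: for every ideal $\mathfrak{a}\subseteq R$ the set $R+\mathfrak{a}S$ is an $R$-subalgebra of $S$, since $\mathfrak{a}S$ is an ideal of $S$, and its image in $V$ is the submodule $\mathfrak{a}V$. Hence futility of $S$ forces the collection $\{\mathfrak{a}V:\mathfrak{a}\subseteq R \text{ an ideal}\}$ to be finite. I would then argue, by contraposition, that a finitely generated faithful module $V$ over a non-artinian ring $R/\mathfrak{c}$ must have infinitely many such submodules $\mathfrak{a}V$, contradicting this finiteness and forcing $R/\mathfrak{c}$ artinian. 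Using Lemma \ref{0}(iii) one may localise and assume $R/\mathfrak{c}$ local; a non-artinian ring carries an infinite strictly descending chain of ideals $\mathfrak{a}_1\supsetneq\mathfrak{a}_2\supsetneq\cdots$, and the determinant trick applied to a finite generating set $v_1,\dots,v_k$ of $V$ shows that whenever $\mathfrak{a}_iV=\mathfrak{a}_{i+1}V$ every $x\in\mathfrak{a}_i$ satisfies $x^k\in\mathfrak{a}_{i+1}$; combined with Nakayama's lemma this controls the possible collapses and yields infinitely many distinct $\mathfrak{a}V$.

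The main obstacle is precisely this last module-theoretic step: distinct ideals $\mathfrak{a}$ can have equal images $\mathfrak{a}V$, so the passage from ``non-artinian'' to ``infinitely many subalgebras $R+\mathfrak{a}S$'' is not automatic and must be driven by faithfulness, via the determinant trick and Nakayama, with separate bookkeeping for the zero-dimensional case. I expect Lemma \ref{113} to enter as a convenient device for transferring finiteness of length between $R/\mathfrak{c}$ and $S/\mathfrak{c}$, which is a finite module extension, so that the descending chain of ideals may be taken in $S/\mathfrak{c}$ when that is more convenient than working in $R/\mathfrak{c}$ directly.
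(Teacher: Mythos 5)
Your framework is fine: Lemma \ref{0}ii disposes of the backward implication and of the futility of $S/(S:R)$, Lemma \ref{1} plus integrality makes $S$ a finite $R$-module, and Lemma \ref{114} reduces everything to $\mathrm{length}_R(S/R)<\infty$. The gap is in the one step that carries all the weight. You assert that a finitely generated faithful module $V$ over a non-artinian ring $T=R/(S:R)$ must have infinitely many submodules of the form $\mathfrak{a}V$, and you propose to prove it by the determinant trick (if $\mathfrak{a}V=\mathfrak{b}V$ with $\mathfrak{b}\subseteq\mathfrak{a}$ then $x^k\in\mathfrak{b}$ for all $x\in\mathfrak{a}$) together with Nakayama. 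Those tools only get you to Krull dimension zero: applied to the chains $xV\supseteq x^2V\supseteq\cdots$ they show that every non-unit of a local $T$ is nilpotent, and Nakayama then makes the maximal ideal nilpotent. They say nothing in the genuinely hard case of a zero-dimensional non-noetherian local ring. Concretely, if $\mathfrak{m}^2=0$ and $\dim_{T/\mathfrak{m}}\mathfrak{m}=\infty$, then $T$ is not artinian and carries many infinite strictly descending chains of ideals, yet for every such chain the conclusion $x^k\in\mathfrak{a}_{i+1}$ holds vacuously because $x^2=0$ already; so ``controls the possible collapses'' is not an argument. The claim is in fact true, but what closes this case is a different use of faithfulness: for generators $v_1,\dots,v_k$ of $V$ the map $x\mapsto(xv_1,\dots,xv_k)$ embeds each ideal $\mathfrak{a}$ (modulo $\mathfrak{a}\cap\Ann_T(V)=0$) into $(\mathfrak{a}V)^k$, and a descending induction along the powers of $\mathfrak{m}$ using this embedding forces every $\mathfrak{m}^i/\mathfrak{m}^{i+1}$ to be finite-dimensional. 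None of that is in your sketch, and it is precisely the content of the theorem.

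For comparison, the paper sidesteps the module-theoretic statement entirely. It uses futility to pick a maximal finite chain of intermediate rings and, via Lemma \ref{113}, reduces to the case where there is no ring strictly between $R$ and $S$ and the conductor is zero; then, localizing at a maximal ideal $\mathfrak{m}$ with $R_{\mathfrak{m}}\neq S_{\mathfrak{m}}$, the intermediate ring $R_{\mathfrak{m}}+\mathfrak{m}S_{\mathfrak{m}}$ must equal $S_{\mathfrak{m}}$ unless $\mathfrak{m}S\subseteq R$; Nakayama rules out the first option, so $\mathfrak{m}\subseteq(S:R)=0$, $R$ is a field, and $S/R$ has finite length. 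If you want to keep your route through the submodules $\mathfrak{a}V$ you must actually prove the finiteness claim above; otherwise the reduction to minimal extensions is the cheaper path.
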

\begin{proof}
 See Theorem 4.2 from \cite{DO5}. We will give a similar proof.

$\implies$: The last part follows from Lemma \ref{0}ic. By Lemma \ref{114} it is enough to show $\mathrm{length}_R(S/R)<\infty$. Using Lemma
\ref{113} we may assume
that there
are no subrings strictly between $R$ and $S$. Furthermore, we may assume that $(S:R)=0$. 
Let $\m$ be a maximal ideal such that $A_{\m}\to B_{\m}$ is not an isomorphism (\cite{AT} Proposition 3.9). Note that there are still no non-trivial
subrings between
$R_{\m}$ and $S_{\m}$ (Lemma \ref{0}iii). Suppose that $\m S
\not \subseteq R$, then $S_{\m}=R_{\m}+\m S_{\m}$. Hence by Nakayama's Lemma (\cite{AT} Proposition 2.6) we conclude $R_{\m}=S_{\m}$, a
contradiction. Hence $\m \subseteq (S:R)=0$ and $R$ is a field. Since $S$ is finitely generated and integral over a field $R$,
$\mathrm{length}_R(S/R)<\infty$ as required. 

$\limplies$: See Lemma \ref{0}ii. 

\end{proof}

This reduces the problem to the case where $R$ is artinian. As an artinian ring is a product of local artinian rings, and the futility property
behaves well with respect to products on the base (Theorem \ref{120}), we may assume that $(R,\m)$ is local artinian. There are again two cases: the
residue field is finite or infinite. We first treat the case where the residue field is finite.

\begin{theorem}
 Let $R \subseteq S$ be integral with $(R,\m)$ local artinian with $R/\m$ finite. Then $S$ is a futile $R$-algebra if and only if $S$ has finite
size.
\end{theorem}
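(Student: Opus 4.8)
The plan is to reduce everything to the finiteness of the base ring $R$. First I would dispose of the reverse implication, which is immediate: if $\#S<\infty$, then the index $(S:R)=\#(S/R)$ is finite, and any $R$-algebra of finite index is futile (via the injection from its set of $R$-subalgebras into the power set of $S/R$, as in the early index lemma). So the whole force of the statement lies in the forward direction.

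For that direction, the key point I would establish first is that $R$ itself is finite. Since $(R,\m)$ is local artinian, $\m$ is nilpotent, say $\m^k=0$, so $R$ carries the finite filtration $R\supseteq\m\supseteq\m^2\supseteq\cdots\supseteq\m^k=0$ whose successive quotients $\m^i/\m^{i+1}$ are finitely generated, hence finite-dimensional, vector spaces over the finite residue field $R/\m$. Each such quotient is therefore finite, and consequently $\#R<\infty$.

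With $R$ finite in hand, I would invoke Lemma \ref{1}: futility of $S$ produces finitely many $\alpha_1,\dots,\alpha_n\in S$ with $S=\bigcup_{i=1}^n R[\alpha_i]$. As $S/R$ is integral, each $\alpha_i$ is a root of a monic polynomial over $R$, so $R[\alpha_i]$ is a finitely generated $R$-module and hence finite, $R$ being finite. Thus $S$ is a finite union of finite sets, giving $\#S<\infty$ as required. The only step carrying any real content is the reduction to $\#R<\infty$; once that is secured, finiteness of the base collapses every integral monogenic subalgebra to a finite set and Lemma \ref{1} bounds how many are needed, so there is no serious obstacle beyond recognizing that the local artinian hypothesis with finite residue field forces $R$ to be finite.
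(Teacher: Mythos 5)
Your proof is correct, but the forward direction takes a different route from the paper's. The paper disposes of the theorem in one line by citing Theorem \ref{31}, which asserts that \emph{any} futile algebra over an artinian ring is a finite module (hence finite when $R$ is finite); the proof of that theorem does not assume integrality, and instead manufactures integrality from futility via the trick of finding coprime $m,n$ with $R[a^m]=R[a^n]$, extracting a polynomial relation with a unit coefficient, and separating unit from nilpotent coefficients. You, by contrast, exploit the integrality hypothesis that is actually present in this theorem's statement: once you know $R$ is finite, integrality makes each $R[\alpha_i]$ a finitely generated $R$-module for free, and Lemma \ref{1} (really just the observation that $S=\bigcup_{a\in S}R[a]$ involves only finitely many distinct subalgebras) finishes the job. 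Your argument is therefore more elementary and self-contained for this particular statement, while the paper's citation buys generality --- Theorem \ref{31} applies to arbitrary (even non-commutative, non-integral) futile algebras over artinian rings. Your explicit verification that $R$ is finite, via the filtration $R\supseteq\m\supseteq\cdots\supseteq\m^k=0$ with finite successive quotients, is also a step the paper leaves implicit in the phrase ``since $R$ is of finite size,'' so spelling it out is a genuine (if small) improvement in completeness.
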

\begin{proof}
See Theorem \ref{31}, since $R$ is of finite size. 
\end{proof}

We consider the case where $R$ is local artinian with infinite residue field. From Theorem \ref{111} we see that we may assume that $(R:S)=0$. The
following is a more polished version of Theorem 5.18 from \cite{DO5}. 

\begin{theorem} \label{112}
 Let $R \subseteq S$ be integral with $(R,\m)$ local artinian with infinite residue field and $(R:S)=0$. 
Put $T=R+\sqrt{0}_S$ and $R'=R+T\m$. 
Then $S$ is a futile $R$-algebra if and only if the following
properties hold:
\begin{enumerate}
 \item $S$ is finitely generated as an $R$-algebra;
 \item there exists $\gamma \in S$ such that $S=T[\gamma]$;
 \item $\m T/\m$ is a uniserial $R$-module;
 \item there exists $\alpha \in T$ such that $T=R'[\alpha]$ and $\alpha^3 \in T\m$, and, with $T'=R'[\alpha^2]$ and $T''=R+T'\m$, there exists
$\beta \in T$ such that $T'=T''[\beta]$ and $\beta^3 \in T'\m$. 
\end{enumerate}
\end{theorem}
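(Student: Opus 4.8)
The plan is to study $S$ through the chain $R \subseteq T \subseteq S$, where $T = R + \sqrt{0}_S$ is local artinian with maximal ideal $\mathfrak{n} = \sqrt{0}_S$ (indeed $T/\sqrt{0}_S \cong R/\m =: k$ is the residue field, so $\mathfrak{n}$ is the unique prime of $T$), and $S/\sqrt{0}_S$ is the reduced quotient. The guiding idea is that condition (2) records that the reduced quotient is monogenic over $T$, while conditions (3) and (4) record the internal nilpotent structure of $T$; each is to be obtained by reducing modulo $\m$ to a $k$-algebra, invoking the infinite-field classification of Theorem \ref{3}, and then lifting back by Nakayama.

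For the direction ($\Rightarrow$), suppose $S$ is $R$-futile. Condition (1) is immediate from Lemma \ref{1}: if $S = \bigcup_{i=1}^n R[\alpha_i]$ then $S = R[\alpha_1, \ldots, \alpha_n]$. Since $\m S \subseteq \sqrt{0}_S$, the reduced quotient $S/\sqrt{0}_S$ is a futile $k$-algebra (Lemma \ref{0}), and being reduced it is monogenic over $k$ by Theorem \ref{3}; lifting a generator to $\gamma \in S$ and using $\sqrt{0}_S \subseteq T$ yields $S = T[\gamma]$, which is (2). For (3) and (4) I would pass to the futile $k$-algebra $T/\m T$ (Lemma \ref{0}): it is local with nilradical $\sqrt{0}_{T/\m T}$, so Theorem \ref{3} bounds $\dim_k \sqrt{0}_{T/\m T} \leq 2$ and forces its square to be nonzero in the extremal case. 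Uniseriality of $\m T/\m$ then expresses that the subalgebra lattice of a futile algebra cannot contain two incomparable one-dimensional pieces, while the generator $\alpha$ with $\alpha^3 \in T\m$, refined by $\beta$ on $T' = R'[\alpha^2]$, is the lift through the two filtration steps $R' = R + T\m$ and $T'' = R + T'\m$ of the degree $\leq 3$ bound together with the $\sqrt{0}^2 \neq 0$ condition.

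For ($\Leftarrow$), assume (1)--(4). I would first deduce that $T$ is $R$-futile from (3) and (4): these are precisely the lifts of the hypotheses of Theorem \ref{3} for $T/\m T$, and uniseriality guarantees that each subalgebra of $T/\m T$ admits only finitely many lifts to $T$. I would then promote futility to $S$ using $S = T[\gamma]$ from (2) together with finite generation (1): an $R$-subalgebra of $S$ is pinned down by its image in the reduced, monogenic quotient $S/\sqrt{0}_S$ (finitely many, by Theorem \ref{3}) and by its intersection with the nilpotent layers of $T$, which the uniserial structure arranges into a finite, linearly ordered set of possibilities.

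The main obstacle is the ($\Leftarrow$) direction, and specifically the precise form of condition (4). Because $k$ is infinite, the ideal $\sqrt{0}_S = \mathfrak{n}$ is typically an infinite set, so Lemma \ref{17} and naive finiteness counts are unavailable; futility of $S$ must instead be extracted from the rigidity imposed by (3) and (4) on an \emph{a priori} infinite family of subalgebras. The cubes $\alpha^3 \in T\m$ and $\beta^3 \in T'\m$ and the two separate generators $\alpha$ over $R'$ and $\beta$ over $T''$ encode, at the level of $T$ rather than of $T/\m T$, exactly the dichotomy in Theorem \ref{3} between $\dim_k \leq 2$ (unconditionally futile) and $\dim_k = 3$ (futile only when $\sqrt{0}^2 \neq 0$). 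Getting this lift correct requires careful Nakayama bookkeeping across $R \to R' \to T$ and control of how multiplication by $\m$ interacts with squares, and it is precisely here that hypothesis (3) does the essential work of keeping the relevant submodule lattices linearly ordered.
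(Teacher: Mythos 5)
The paper does not actually reprove this statement: its proof of Theorem \ref{112} consists of citing Theorem 5.18 of \cite{DO5} and translating that theorem's conditions into the form given here (a futile integral extension is a finite module, hence satisfies FCP; Lemma \ref{90} converts the uniseriality condition; the length condition is automatic). Your proposal instead sketches a self-contained argument, which is in spirit the route the paper takes in Section \ref{1234} (Propositions \ref{105} and \ref{122}, culminating in Theorem \ref{913}, which the paper explicitly offers as an alternative to Theorem \ref{112}). So the strategy is reasonable and not redundant with the paper's citation-based proof.

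However, as written the proposal has genuine gaps at exactly the points that carry the content. In the direction ($\Leftarrow$), the assertion that an $R$-subalgebra $B \subseteq S$ is ``pinned down by its image in $S/\sqrt{0}_S$ and by its intersection with the nilpotent layers of $T$, which the uniserial structure arranges into a finite, linearly ordered set'' is not an argument: a subalgebra is not in general determined by such data, and the finiteness of the fibres is precisely what must be proved. The paper's Proposition \ref{122} shows what this actually requires: the delicate case is a local subalgebra $B$ whose image in $T/\m T \cong k[x]/(x^3)$ is the intermediate algebra $k[x^2]$, and there one needs the condition $\mathfrak{n}^4+\mathfrak{n}^2\m+\m=\m T$ (the avatar of your condition (4)) together with a Nakayama/surjectivity argument to force $B$ to equal $R+\mathfrak{n}^2+\m T$; nothing in your sketch produces this. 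Likewise, in ($\Rightarrow$) you never construct the elements $\alpha$ and $\beta$ of condition (4) or explain why the second filtration step $T''=R+T'\m$ appears at all; saying that (4) ``is the lift through the two filtration steps of the degree $\leq 3$ bound'' names the answer without deriving it. Your closing paragraph correctly identifies these as the main obstacles, but identifying an obstacle is not the same as overcoming it: as it stands the proposal is an outline whose hard steps remain open.
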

\begin{proof}
This follows Theorem 5.18 from \cite{DO5} keeping in mind that a futile $R$-algebra coming from an integral extension is finite as $R$-module, and
under this assumption, FCP follows directly. Also use Lemma \ref{90} and notice that the length condition is automatically satisfied. 
\end{proof}

Theorem \ref{401}, Theorem \ref{3} and Theorem \ref{913} give an alternative to Theorem \ref{112}.

\section{Artinian rings} \label{1234}

\subsection{Finite rings}

\begin{theorem} \label{31} \label{2}
 Let $R$ be an artinian ring and let $A$ be a futile $R$-algebra. Then $A$ is finite as $R$-module. If $R$ is of finite size, then so is $A$. 
\end{theorem}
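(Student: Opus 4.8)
The plan is to peel off the easy reductions by quoting earlier results until only a genuinely commutative, monogenic, local computation remains, and then to do that computation by working modulo the maximal ideal. I would first dispose of the commutative case. Since $R$ is artinian it is a finite product of local artinian rings, and under the equivalence of categories of Theorem \ref{120} both ``futile'' (finitely many subalgebras) and ``finitely generated as a module'' are detected factor by factor; so I may assume $(R,\m)$ is local artinian, with $\m$ nilpotent, say $\m^N=0$. Next I would use Lemma \ref{1} to write $A=\bigcup_{i=1}^n R[\alpha_i]$ with each $(A:R[\alpha_i])<\infty$. Because $A$ is commutative and closed under addition, this union coincides with the $R$-submodule $\sum_{i=1}^n R[\alpha_i]$, so it suffices to show each $R[\alpha_i]$ is a finitely generated $R$-module. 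Each $R[\alpha_i]$ is a subalgebra of a futile algebra, hence futile by Lemma \ref{0}, and this reduces me to the case $A=R[\alpha]$ monogenic.

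For this core case I would pass to $A/\m A$. By Lemma \ref{0} it is a futile algebra over the field $k=R/\m$, and it is monogenic, generated by the image $\bar\alpha$ of $\alpha$. The key claim is that a monogenic futile algebra over a field is finite dimensional: if $\bar\alpha$ were transcendental then $A/\m A\cong k[x]$, whose subalgebras $k[x^m]$ for $m\geq 1$ are pairwise distinct, contradicting futility; hence $\bar\alpha$ is algebraic and $A/\m A=k[\bar\alpha]$ is finite dimensional over $k$. Choosing a monic relation of degree $d$ for $\bar\alpha$, the finitely generated submodule $M=\sum_{i<d}R\alpha^i$ satisfies $A=M+\m A$. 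Since $\m M\subseteq M$, one gets $\m A=\m M+\m^2 A\subseteq M+\m^2 A$, so $A=M+\m A=M+\m^2 A$, and iterating yields $A=M+\m^j A$ for all $j$; taking $j=N$ gives $A=M$, a finitely generated $R$-module. This settles the commutative case, and the ``finite size'' assertion is then immediate, as a finitely generated module over a finite ring is finite.

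Finally I would pass from commutative to arbitrary $A$. By Lemma \ref{9} the commutator ideal $[A,A]$ is finite, and by Theorem \ref{130} the quotient $A/[A,A]$ is a commutative futile $R$-algebra, hence finitely generated as an $R$-module by the case already handled. As $[A,A]$ is finite it is a finitely generated $R$-module too, so the extension $0\to[A,A]\to A\to A/[A,A]\to 0$ forces $A$ to be finitely generated over $R$; and if $R$ is finite then $A$ is finite.

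The reductions (products via Theorem \ref{120}, subalgebras and residue quotients via Lemma \ref{0}, and the commutator quotient via Lemma \ref{9} and Theorem \ref{130}) are all quotations, so the only load-bearing paragraph is the core one. Its crux is the pair of facts that a polynomial ring over any field has infinitely many subalgebras, ruling out transcendence, and that the nilpotence of $\m$ upgrades the modular statement $A=M+\m A$ to $A=M$ without any prior finiteness hypothesis on $A$. The main point to be careful about is that the reduction to the monogenic case is legitimate, i.e.\ that one only needs each $R[\alpha_i]$ to be module-finite rather than $A$ directly; this is exactly why rewriting $\bigcup_i R[\alpha_i]$ as the submodule $\sum_i R[\alpha_i]$ in the commutative setting is what makes the argument go through.
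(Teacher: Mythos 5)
Your proof is correct, but the load-bearing step is argued quite differently from the paper's. Both proofs reduce to local artinian $(R,\m)$ via Theorem \ref{120}, and both reduce the problem to showing each $R[a]$ is a finite $R$-module (a finite union of submodules covering $A$ equals their sum, so module-finiteness of the pieces suffices). The divergence is in how one shows a monogenic futile algebra is module-finite. The paper extracts an integral equation directly: futility forces $R[a^m]=R[a^n]$ for some coprime $m,n\geq 2$, giving a relation $f(a)=0$ in which the coefficient of $x^m$ is a unit; writing $f=g-h$ with $g$ carrying the unit coefficients and $h$ the nilpotent ones (possible since $R$ is local artinian) and raising to a suitable power kills $h$ and leaves a relation with unit leading coefficient, so $a$ is integral over $R$. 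You instead pass to the residue field, rule out transcendence of $\overline{a}$ via the infinitely many subalgebras $k[x^m]\subseteq k[x]$, and lift finite generation from $A/\m A$ to $A$ using the nilpotence of $\m$ in place of Nakayama --- which is the right move, since Nakayama cannot be invoked before finiteness is known, and your iteration $A=M+\m^jA$ handles this correctly. Both routes are sound; the paper's is more explicit (it produces an integral dependence relation without passing to a quotient), while yours isolates the single fact that matters over a field, namely that $k[x]$ is not futile. Two minor observations: your detour through the commutator ideal is unnecessary, since $R[a]$ is automatically commutative for a single element $a$ (the image of $R$ is central), so the union-equals-sum argument already covers noncommutative $A$ without Lemma \ref{9} and Theorem \ref{130}; and that identity $\bigcup_i R[\alpha_i]=\sum_i R[\alpha_i]$ needs only that the $R[\alpha_i]$ are submodules covering $A$, not commutativity of $A$.
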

\begin{proof}
We can reduce to the case where $R$ is local by using Theorem \ref{120} and Theorem 8.7 from \cite{AT}. Let $a \in A$ and consider the
subalgebras $R[a^i]$ for $i \in \Z_{\geq 2}$. As $A$ is a futile $R$-algebra, there are $m$ and $n$ coprime such that
$R[a^m]=R[a^n]$. Hence we see that $a^m=\sum_{i=1}^s r_ia^{in}$. This shows that there is a polynomial $f \in R[x]$ with some unit coefficient which
satisfies $f(a)=0$. Write $f=g-h$ where the coefficients of $g$ are units and the coefficients of $h$ are nilpotent. Take a $t \in \Z_{\geq 0}$
such that $h^t=0$. Then, as $g(a)=h(a)$ we have $g(a)^t=h(a)^t=0$. The highest
coefficient of $g$ is still a unit, and hence it follows that $R[a]$ is a finite $R$-module. From the futility it follows that $R$ is a finite union
of $R$-modules of finite length, and hence that $A$ is a finite $R$-module.

The last statement follows directly.
\end{proof}

\subsection{Extensions of fields} \label{800}

Let $L/K$ be an extension of fields and let $p$ be the characteristic of $K$ if this is nonzero, and $1$ otherwise.
Then we put $L_i=\{x \in L: \exists j: x^{p^j} \in K\}$, the maximal purely inseparable field extension of $K$
in $L$. Put $L_s=\{x \in L: x \textrm{ separable over } K\}$. Notice that $L_i \cap L_s=K$. 

\begin{definition}
 A field extension $L/K$ is called separably disjoint if $L=L_sL_i$. Equivalently, $L/K$ is separably disjoint if $L/L_i$ is separable.
\end{definition}

One can easily show that a normal extension is separably disjoint by using Galois theory (Proposition 6.11 from \cite{LA}). 

Notice that a field extension $L/K$ has a unique maximal separably disjoint subextension, namely $L_sL_i$.

\begin{lemma} \label{1111}
 Let $L/K$ be an algebraic extension of fields. Then the map
\begin{eqnarray*}
\varphi: \{E: K \subseteq E \subseteq L\} &\to& \{E': K \subseteq E' \subseteq L_{K,\sep}\} \times \{E'': L_{K,\sep} \subseteq E'' \subseteq
L\} \\
E &\mapsto& (E \cap L_{K,\sep}, E L_{K,\sep})
\end{eqnarray*}
is injective. The image consists of pairs $(E_1,E_2)$ with $E_1 \subseteq E_2$ and $E_2$ separably disjoint over $E_1$. 
\end{lemma}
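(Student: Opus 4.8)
The plan is to exhibit an explicit left inverse to $\varphi$ on its image, from which both injectivity and the description of the image drop out. Write $F = L_{K,\sep}$, so that $F/K$ is separable and $L/F$ is purely inseparable. For an intermediate field $K \subseteq E \subseteq L$ recall the standard facts that $E \cap F$ is the separable closure of $K$ in $E$ and that $E/(E\cap F)$ is purely inseparable. The crux will be the recovery formula
\[
E = \{x \in EF : x \text{ is purely inseparable over } E \cap F\},
\]
i.e. that $E$ equals the purely inseparable closure of $E_1 := E \cap F$ inside $E_2 := EF$.

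First I would record the two structural facts everything rests on: a separable extension and a purely inseparable extension of a common base field are linearly disjoint; and for a finite extension $M/N$ the separable closure $M_s$ of $N$ in $M$ has $[M_s:N]=[M:N]_s$, while the purely inseparable closure $M_i$ satisfies $[M_i:N] \le [M:N]_i$. Applying this with base $E_1$: since $E/E_1$ is purely inseparable and $F/E_1$ is separable, $E$ and $F$ are linearly disjoint over $E_1$, and the separable closure of $E_1$ in $E_2 = EF$ is exactly $F$ (it contains $F$ because $F/E_1$ is separable, and is contained in $F$ because $E_2/F$ is purely inseparable).

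Next I would prove the recovery formula. The inclusion $\subseteq$ is immediate since $E/E_1$ is purely inseparable. For $\supseteq$, take $x \in E_2$ purely inseparable over $E_1$. As $L/K$ is algebraic, $x$ lies in $E^0 F^0$ for finite subextensions $E_1 \subseteq E^0 \subseteq E$ and $E_1 \subseteq F^0 \subseteq F$, with $E^0/E_1$ finite purely inseparable and $F^0/E_1$ finite separable. Linear disjointness gives $[E^0F^0:E_1]=[E^0:E_1][F^0:E_1]$, and the separable closure of $E_1$ in $E^0F^0$ is $F^0$, so $[E^0F^0:E_1]_i = [E^0:E_1]$; hence the purely inseparable closure of $E_1$ in $E^0F^0$ is $E^0$, and $x \in E^0 \subseteq E$. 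This proves the formula, and with it the injectivity of $\varphi$, since $E$ is now determined by the pair $(E_1,E_2)$.

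Finally I would identify the image. For the forward inclusion, knowing that the separable closure of $E_1$ in $E_2$ is $F$ and that $E$ lies in the purely inseparable closure yields $(E_2)_s (E_2)_i \supseteq FE = E_2$, so $E_2/E_1$ is separably disjoint (and $E_1 \subseteq F \subseteq E_2$ is automatic). Conversely, given a pair with $K \subseteq E_1 \subseteq F \subseteq E_2 \subseteq L$ and $E_2/E_1$ separably disjoint, I would set $E$ to be the purely inseparable closure of $E_1$ in $E_2$ and verify $\varphi(E)=(E_1,E_2)$: since $E_2/F$ is purely inseparable the separable closure of $E_1$ in $E_2$ is again $F$, so $EF = (E_2)_i(E_2)_s = E_2$ by separable disjointness, while $E\cap F$ is both purely inseparable and separable over $E_1$, hence equals $E_1$. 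The main obstacle is the recovery formula, and specifically the finite degree computation identifying the purely inseparable closure of $E_1$ in $E^0F^0$ with $E^0$; the reduction to finite subextensions is precisely what lets the possibly infinite extension $L/K$ be handled by elementary degree arguments.
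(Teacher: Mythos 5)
Your proof is correct, and it follows the same overall strategy as the paper: both arguments recover $E$ from the pair $(E_1,E_2)=(E\cap L_{K,\sep},\,EL_{K,\sep})$ as the purely inseparable closure of $E_1$ inside $E_2$, and both characterize the image by the same two observations (the separable closure of $E_1$ in $E_2$ is $L_{K,\sep}$, and elements both separable and purely inseparable over $E_1$ lie in $E_1$). The only genuine difference is how the crux inclusion --- that the purely inseparable closure of $E_1$ in $EL_{K,\sep}$ is contained in $E$ --- is established. You pass to finite subextensions $E^0F^0$ and run a linear-disjointness and degree count ($[E^0F^0:E_1]_i=[E^0:E_1]$, so the purely inseparable closure is exactly $E^0$), which is sound but requires importing the linear disjointness of separable and purely inseparable extensions and the inequality $[M_i:N]\le[M:N]_i$. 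The paper instead notes that $EL_{K,\sep}/E$ is separable (being the lift of the separable extension $L_{K,\sep}/K$ along $K\subseteq E$), so any $x\in EL_{K,\sep}$ with $x^{p^j}\in E\cap L_{K,\sep}\subseteq E$ is simultaneously separable and purely inseparable over $E$ itself, hence lies in $E$. That one-line observation makes the reduction to finite subextensions and all degree bookkeeping unnecessary; your version buys nothing extra in generality, so you may as well adopt the shorter argument, though what you wrote is a complete and valid verification.
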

\begin{proof}
 We will construct $E$ from $(E \cap L_{K,\sep}, E L_{K,\sep})$. Let $E'=\{x \in E L_{K,\sep}: \exists j: x^{p^j} \in E \cap
L_{K,\sep}\}$.
We claim that
$E=E'$. One easily obtains $E \subseteq E'$. Let $x \in EL_{K,\sep}$ such that $x^{p^j}
\in E \cap L_{K,\sep}$. As $E
L_{K,\sep}/E$ is separable, it follows that $x \in E$. 

For $(E_1,E_2)$ in the image, one easily obtains that $E_2/E_1$ is separably disjoint. Indeed, $E/E \cap L_{K,\sep}$ is purely inseparable,
$L_{K,\sep}/E \cap L_{K,\sep}$
is separable and their compositum is $EL_{K,\sep}$. On the other hand, consider a pair $(E_1,E_2)$ with $E_1 \subseteq E_2$ and $E_2/E_1$ separably
disjoint. Set $N=\{x \in E_2: \exists j: x^{p^j} \in E_1\}$. One then easily deduces $\varphi(N)=(E_1,E_2)$.
\end{proof}

Assume that $[L:K]<\infty$. Let $j \in \Z_{\geq 0}$. We have $[L^{p^j}K:L^{p^{j+1}}K]=[L^{p^{j+1}}K^p:L^{p^{j+2}}K^p] \geq
[L^{p^{j+1}}K:L^{p^{j+2}}K]$. Let $j$ be the first $j$ such that $[L^{p^j}K:L^{p^{j+1}}K]=1$. Then obviously $L^{p^j}K$ is separable over $K$ and it
is the separable closure of $L$ in $K$. 

\begin{proof}[Proof of Theorem \ref{401}]
 i $\implies$ iii: If $K$ is finite, the statement follows from Theorem \ref{31} and the fact that finite extensions of finite fields are primitive. If
$K$ is infinite, use Theorem \ref{24}.

 iii $\implies$ ii: Note that $L/K$ is automatically finite. Also $L^pK=K(\alpha^p)$ and one easily sees $[K(\alpha):K(\alpha^p)]\in \{1,p\}$. 

ii $\implies$ i: Notice that $K$-subalgebras of $L$ are automatically fields. Using Lemma \ref{1111} it is enough to show that $L_s/K$ and $L/L_s$ both have finitely many subfields.
Notice that $L_s/K$ has finitely many subextensions by Galois theory. Consider the purely inseparable extension $L/L_s$. As $[L:L^pK] \in
\{1,p\}$, one easily sees that all the subfields of $L/L_s$ are given by $L^{p^0}K \supsetneq L^{p^1}K \supsetneq \ldots \supsetneq L^{p^i}K=L_s$
where $[L:L_s]=p^i$.
\end{proof}

\subsection{Infinite fields}

We will now study futile $R$-algebras where $R$ is an infinite field. Most results of this section were known before (see for example \cite{DO1}), but
the proofs are different.

\begin{lemma} \label{4}
Let $R$ be an infinite field and let $f \in R[x]$. 
\begin{enumerate}
 \item Assume that $\mathrm{deg}(f)=1$. Let $r \in \Z_{\geq 1}$. Then $A=R[x]/(f^r)$ is a futile $R$-algebra if and only if $r \leq 3$. 
 \item Assume that $n=\mathrm{deg}(f)>1$. Let $r \in \Z_{\geq 2}$. Then $A=R[x]/(f^r)$ is not a futile $R$-algebra.
 \item Assume that $f$ is irreducible in $R[x]$. Then $R[x]/(f)$ is a futile $R$-algebra.  
\end{enumerate}
Furthermore, the $R$-subalgebras of $R[x]/(x^i)$ where $i \in \{0,1,2,3\}$ are $R[x^j] \subseteq R[x]/(x^i)$ for $j=1,\ldots,i$. 
\end{lemma}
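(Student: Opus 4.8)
The plan is to treat the three items together after a few reductions, handling item (iii) first because it is essentially immediate from Theorem \ref{401}: if $f$ is irreducible, then $R[x]/(f)$ is a finite field extension $L/R$ generated by the image of $x$, hence primitive, so the implication (iii) $\Rightarrow$ (i) of Theorem \ref{401} gives that it is futile. For item (i) I would first normalise: when $\deg f = 1$ the leading coefficient is a unit, so $(f^r) = ((x+c)^r)$ for some $c \in R$, and the $R$-automorphism $x \mapsto x-c$ identifies $R[x]/(f^r)$ with $R[x]/(x^r)$; thus I may assume $f = x$. The two non-futility claims I would then reduce to a single small ring each by passing to a quotient: $R[x]/(x^4)$ is a quotient of $R[x]/(x^r)$ for every $r \geq 4$, and $R[x]/(f^2)$ is a quotient of $R[x]/(f^r)$ for every $r \geq 2$, so by Lemma \ref{0} (a quotient of a futile algebra is futile, used contrapositively) it suffices to prove that $R[x]/(x^4)$, and, when $\deg f > 1$, $R[x]/(f^2)$, are not futile.

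For the positive part (the \emph{furthermore} statement, which also yields futility of $R[x]/(x^r)$ for $r \leq 3$) I would directly enumerate the $R$-subalgebras $B$ of $A = R[x]/(x^i)$, $i \leq 3$. The cases $i = 0, 1$ are trivial ($A = 0$ and $A = R$). For $i = 2, 3$ the key observation is that if $B$ contains an element whose $\bar x$-coefficient is nonzero, then, subtracting its constant term and dividing by that (unit) coefficient, and using that squaring $a_1 \bar x + a_2 \bar x^2$ produces $a_1^2 \bar x^2$, one finds $\bar x \in B$ and hence $B = A$; if every element of $B$ has zero $\bar x$-coefficient but some element has nonzero $\bar x^2$-coefficient, then $\bar x^2 \in B$ and $B = R[\bar x^2]$; otherwise $B = R$. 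This yields exactly the subalgebras $R[\bar x^j]$ with $1 \leq j \leq i$, namely $\{R, A\}$ for $i = 2$ and $\{R,\ R + R\bar x^2,\ A\}$ for $i = 3$. In particular these are finite in number, so $R[x]/(x^r)$ is futile for $r \leq 3$ by definition.

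For the negative part I would exhibit explicit infinite families of subalgebras, which is exactly where the hypothesis that $R$ is infinite enters. In $R[x]/(x^4)$, for $\lambda \in R$ set $z_\lambda = \bar x^2 + \lambda \bar x^3$; then $z_\lambda^2 = 0$, so $R[z_\lambda] = R \oplus R z_\lambda$, and since $\bar x^2, \bar x^3$ are $R$-linearly independent the one-dimensional spaces $R z_\lambda$, hence the subalgebras $R[z_\lambda]$, are pairwise distinct, giving infinitely many and proving item (i) for $r \geq 4$. For item (ii), working in $R[x]/(f^2)$ with $n = \deg f > 1$, I would use that multiplication by $f$ induces an $R$-module isomorphism $R[x]/(f) \cong (f)/(f^2)$, under which $1$ and $\bar x$ are linearly independent because $n \geq 2$; hence $\bar f$ and $\bar x \bar f$ are linearly independent in $R[x]/(f^2)$. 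For $\lambda \in R$ set $y_\lambda = \bar f + \lambda \bar x \bar f = \bar f(1 + \lambda \bar x)$; then $y_\lambda^2 = \bar f^2(1 + \lambda \bar x)^2 = 0$, so $R[y_\lambda] = R \oplus R y_\lambda$, and the same independence shows these are pairwise distinct, again giving infinitely many subalgebras and non-futility.

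The main obstacle I anticipate is the subalgebra enumeration for $R[x]/(x^3)$: one must rule out all "tilted" two-dimensional subalgebras and confirm that the squaring trick forces any element with nonzero linear part to generate all of $A$, so that the only proper nontrivial subalgebra is $R[\bar x^2]$. The other delicate point, needed for item (ii) in full generality (with $f$ assumed neither irreducible nor separable), is the linear-independence input $1, \bar x$ in $R[x]/(f)$ and its transport to $(f)/(f^2)$ via the multiplication-by-$f$ isomorphism; once that is in place, the construction of the infinite families and the distinctness verifications are routine.
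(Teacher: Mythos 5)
Your proposal is correct and follows essentially the same route as the paper: part (iii) via Theorem \ref{401}, the non-futility claims via quotient reduction (Lemma \ref{0}) to $R[x]/(f^2)$ together with an infinite family of two-dimensional subalgebras $R\oplus R\cdot(f\ell)$ built from square-zero elements, and the positive claim via a direct enumeration of the subalgebras of $R[x]/(x^3)$ using the same squaring trick. The only cosmetic difference is that for $r\ge 4$ you exhibit the family $x^2+\lambda x^3$ in $R[x]/(x^4)$ directly, whereas the paper obtains the same family by applying part (ii) to the degree-two polynomial $f^2$.
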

\begin{proof}
 ii. By Lemma \ref{0}ib we may assume that $r=2$. Consider the following
map:
\begin{eqnarray*}
 \Ps^{n-1}(R) &\to& \{R\textrm{-subalgebras of }A \} \\
(a_0:\ldots:a_{n-1}) &\mapsto& R \oplus (f \cdot \sum_{i=0}^{n-1} a_i x^i).
\end{eqnarray*}
Notice that this map is injective and that, as $n \geq 2$, the set $\Ps^{n-1}(R)$ is infinite.

 i. $\implies$: This follows from and Lemma \ref{0}ib and ii, where we take $f^2$ instead of $f$.

$\limplies$: We show that the statement is true if $r=3$, the rest follows from Lemma \ref{0}ib. After a translation we may assume that $f=x$ and that
$A=R[x]/(x^3)$. We claim that the only $R$-subalgebras are $R$, $A$ and the ring generated by $R$ and $x^2$. Indeed, consider the ring
generated by $g=a_0+a_1x+a_2x^2$ over $R$. We may assume that $a_0=0$. If $a_1 \neq 0$, we may assume that $a_1=1$ and we have $x^2=g-a_2g^2$. Hence the ring generated by $g$
is just $A$. If $a_1=0$ and $a_2 \neq 0$, then the ring is generated by $x^2$. The statement follows. Furthermore, the last statement also follows easily.

iii. This follows from Theorem \ref{401}.

\end{proof}

The following lemma allows us to work with products of algebras.

\begin{lemma}[Goursat] \label{5}
 Let $A, B$ be $R$-algebras. Then there is a bijection between the quintuples $(C,D,I,J,\varphi)$ with the following properties
\begin{itemize}
\item $C$ is an $R$-subalgebra of $A$;
\item $D$ is an $R$-subalgebra of $B$;
\item $I \subseteq C$ is a two-sided ideal;
\item $J \subseteq D$ is a two-sided ideal;
\item an $R$-algebra isomorphism $\varphi: C/I \overset{\sim}{\to} D/J$;
\end{itemize}
and the set of $R$-subalgebras of $A \times B$ given by $(C,D,I,J,\varphi) \mapsto \{(a,b) \in C \times D: \varphi(\overline{a})=\overline{b}\}$. 
\end{lemma}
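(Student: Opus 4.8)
The plan is to exhibit an explicit inverse to the given map and then check that the two assignments are mutually inverse; the content is entirely the bookkeeping of this inverse. Given an $R$-subalgebra $S \subseteq A \times B$, let $p_A \colon A \times B \to A$ and $p_B \colon A \times B \to B$ be the two projections, and set $C = p_A(S)$ and $D = p_B(S)$, which are $R$-subalgebras of $A$ and $B$. Define $I = \{a \in A : (a,0) \in S\}$ and $J = \{b \in B : (0,b) \in S\}$. First I would check that $I$ is a two-sided ideal of $C$ and $J$ a two-sided ideal of $D$: for instance, if $(a,0) \in S$ and $c \in C$, pick $d$ with $(c,d) \in S$, so $(c,d)(a,0) = (ca,0) \in S$ and $(a,0)(c,d) = (ac,0) \in S$, giving $ca, ac \in I$. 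Finally define $\varphi \colon C/I \to D/J$ by $\varphi(\overline{c}) = \overline{d}$ whenever $(c,d) \in S$.

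The one step genuinely requiring an argument is that $\varphi$ is a well-defined isomorphism, and I would isolate this first. Well-definedness in both variables follows from a single observation: if $(c,d)$ and $(c',d')$ lie in $S$ with $c - c' \in I$, then $(c-c',0) \in S$, and subtracting the element $(c-c',0)$ from $(c,d)-(c',d') = (c-c',\,d-d') \in S$ yields $(0,\,d-d') \in S$, hence $d - d' \in J$. This shows simultaneously that $\overline{d}$ is independent of the chosen $d$ over a fixed $c$ and that the rule descends to $C/I$. That $\varphi$ is an $R$-algebra homomorphism is immediate from $S$ being a subalgebra, and the symmetric construction (swapping the roles of $A$ and $B$) produces its two-sided inverse, so $\varphi$ is an isomorphism.

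It then remains to verify that the two constructions compose to the identity in both directions. Starting from $S$ and forming $(C,D,I,J,\varphi)$, the reconstructed subalgebra $\{(a,b) \in C \times D : \varphi(\overline{a}) = \overline{b}\}$ equals $S$: one inclusion is the definition of $\varphi$, and conversely, given $(a,b)$ with $\varphi(\overline{a}) = \overline{b}$, choose $d$ with $(a,d) \in S$, so that $\overline{d} = \varphi(\overline{a}) = \overline{b}$ forces $(0,\,b-d) \in S$, and adding gives $(a,b) \in S$. Starting instead from a quintuple and forming $S = \{(a,b) : \varphi(\overline{a}) = \overline{b}\}$, I recover $p_A(S) = C$ from the surjectivity of $\varphi$, recover $\{a : (a,0)\in S\} = \{a \in C : \overline{a} \in \ker\varphi\} = I$ from its injectivity, and similarly recover $D$, $J$ and $\varphi$. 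I do not anticipate any real obstacle: once the well-definedness of $\varphi$ is in place, everything else is formal verification.
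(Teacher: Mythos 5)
Your proposal is correct and is precisely the standard Goursat argument that the paper invokes by saying the proof is ``essentially the same as the proof of Goursat's Lemma for groups'': projections give $C$ and $D$, the kernels of the cross-projections give $I$ and $J$, and the induced graph isomorphism gives $\varphi$. All the verifications you outline (well-definedness via $(c-c',d-d')-(c-c',0)\in S$, and the two compositions being identities) are the right ones and go through without difficulty.
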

\begin{proof}
 The proof is essentially the same as the proof of Goursat's Lemma for groups. 
\end{proof}

\begin{lemma} \label{11}
 Let $A,B$ be futile $R$-algebras. Suppose that for any quotient $C$ of an $R$-subalgebra of $A$
we have that $\#
\mathrm{Aut}_R(C)<\infty$ and that subalgebras of $A$ respectively $B$ have only finitely many ideals. Then $A \times B$ is a futile
$R$-algebra.
\end{lemma}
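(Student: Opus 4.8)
The plan is to apply Goursat's Lemma (Lemma \ref{5}), which identifies the $R$-subalgebras of $A \times B$ with the quintuples $(C,D,I,J,\varphi)$ in which $C \subseteq A$ and $D \subseteq B$ are $R$-subalgebras, $I \subseteq C$ and $J \subseteq D$ are two-sided ideals, and $\varphi \colon C/I \overset{\sim}{\to} D/J$ is an isomorphism of $R$-algebras. Thus it suffices to show that there are only finitely many such quintuples, and I would do this by bounding each of the five ingredients in turn.

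First I would observe that since $A$ and $B$ are $R$-futile, there are only finitely many possibilities for the subalgebra $C$ and for the subalgebra $D$. Fixing such a $C$ and $D$, the hypothesis that subalgebras of $A$, respectively of $B$, have only finitely many ideals yields finitely many choices for the two-sided ideal $I \subseteq C$ and for $J \subseteq D$. So for each fixed admissible $(C,D,I,J)$ only the isomorphisms $\varphi$ remain to be counted.

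The main step --- and the only point that uses the automorphism hypothesis --- is to bound the number of $R$-algebra isomorphisms $\varphi \colon C/I \to D/J$. I would argue that if this set is nonempty, then fixing one isomorphism $\varphi_0$ identifies it with $\Aut_R(C/I)$ via $\varphi \mapsto \varphi_0^{-1} \circ \varphi$; in other words the set of admissible $\varphi$ is a torsor under $\Aut_R(C/I)$. Now $C/I$ is a quotient of the $R$-subalgebra $C$ of $A$, so by hypothesis $\#\Aut_R(C/I) < \infty$. Hence for each $(C,D,I,J)$ the number of compatible $\varphi$ is either $0$ or $\#\Aut_R(C/I)$, in any case finite.

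Combining these bounds, the set of quintuples $(C,D,I,J,\varphi)$ is finite, being a finite union (over the finitely many $(C,D,I,J)$) of finite sets of isomorphisms; by Goursat's Lemma the $R$-subalgebras of $A \times B$ are therefore finite in number, so $A \times B$ is $R$-futile. I do not anticipate a genuine obstacle: once Goursat's Lemma is in hand the argument is essentially bookkeeping over finite sets, and the only slightly non-formal ingredient is the torsor observation in the previous paragraph. Note that the hypothesis on automorphisms is needed only on the $A$-side precisely because, when an isomorphism exists at all, $\#\Aut_R(C/I)=\#\Aut_R(D/J)$, so either side controls the count.
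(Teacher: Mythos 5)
Your proof is correct and takes the same route as the paper, which simply cites Goursat's Lemma (Lemma \ref{5}); you have supplied the bookkeeping the paper leaves implicit, including the key torsor observation that the set of isomorphisms $C/I \to D/J$, when nonempty, has cardinality $\#\Aut_R(C/I)$.
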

\begin{proof}
 This follows from Lemma \ref{5}.
\end{proof}

\begin{lemma} \label{12}
 Let $R$ be a field and let $F=\prod_{i=1}^n F_i$ ($n \in \Z_{\geq 0}$) an $R$-algebra where the $F_i$ are fields and $[F_i:R]<\infty$. Then we have:
\begin{enumerate}
 \item any $R$-subalgebra of $F$ is a finite product of fields which are finite over $R$;
 \item $F$ has only finitely many ideals and a quotient by such an ideal is isomorphic to a product
of fields which are finite over $R$;
 \item $\# \mathrm{Aut}_R(F)<\infty$.
\end{enumerate}
\end{lemma}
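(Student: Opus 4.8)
The plan is to use that $F$ is a finite-dimensional, reduced, commutative $R$-algebra, and to deduce all three assertions from the structure of such algebras. For (i), let $C \subseteq F$ be an $R$-subalgebra. Then $\dim_R C \le \dim_R F = \sum_{i=1}^n [F_i:R] < \infty$, so $C$ is an artinian commutative ring, and $C$ is reduced because it is a subring of the reduced ring $F$. An artinian ring is a finite product of local artinian rings, and a reduced local artinian ring is a field; hence $C$ is a finite product of fields, each finite over $R$ since $C$ is. Applying this to $C = F$ itself recovers the asserted shape of $F$.

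For (ii), since each $F_i$ is a field its only ideals are $0$ and $F_i$, so the ideals of $F = \prod_{i=1}^n F_i$ are exactly the products $\prod_{i=1}^n I_i$ with $I_i \in \{0, F_i\}$. There are therefore $2^n$ of them, a finite number, and the quotient by such an ideal is $\prod_{i:\, I_i = 0} F_i$, again a product of fields finite over $R$.

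For (iii), the idempotents of $F$ are the tuples with entries in $\{0,1\}$, so $F$ has only finitely many idempotents, of which exactly $n$ are primitive, namely the unit elements $e_i$ of the factors $F_i$. Any $\sigma \in \Aut_R(F)$ permutes the idempotents, hence permutes the $e_i$, which yields a homomorphism $\Aut_R(F) \to S_n$. Its kernel consists of those $\sigma$ fixing every $e_i$; such $\sigma$ preserve each factor $F_i = e_i F$ and restrict to elements of $\Aut_R(F_i)$, so the kernel is $\prod_{i=1}^n \Aut_R(F_i)$. As each finite field extension $F_i/R$ has at most $[F_i:R]$ automorphisms and $S_n$ is finite, it follows that $\Aut_R(F)$ is finite. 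The only substantive input is the structure theorem used in (i), that a reduced artinian commutative ring is a finite product of fields; everything else is elementary, and I expect the idempotent bookkeeping in (iii) to require the most care, since one must check that an $R$-algebra automorphism genuinely permutes the primitive idempotents and that the kernel of the resulting map to $S_n$ splits as the claimed product of the $\Aut_R(F_i)$.
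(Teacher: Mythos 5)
Your proof is correct and follows essentially the same route as the paper: for (i) a subalgebra is artinian and reduced, hence a finite product of fields; for (ii) the ideals of a product of fields are known explicitly; and for (iii) the paper's ``look at stalks'' is exactly your reduction to the primitive idempotents and the finiteness of each $\Aut_R(F_i)$. You have simply written out the details that the paper leaves as easy.
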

\begin{proof}
 i. Let $A$ be a subalgebra. Then $A$ is artinian and hence isomorphic to a product of local artinian rings. Notice that a local reduced artinian
ring is a field.

 ii. This follows easily because we know the ideals of $F$. 

 iii. This follows easily by looking at stalks and the fact that $\#\mathrm{Aut}_R(F_i)<\infty$.

\end{proof}

\begin{proof}[Proof of Theorem \ref{3}]
i $\implies$ iii: Suppose that $A$ is a futile $R$-algebra. By Theorem \ref{24} we know that $A=R[a]$ for some $a \in A$. Note
that $R[x]$ is a principal ideal domain, so there is a non-zero polynomial $f$ such that $R[a] \cong R[x]/(f)$. Write $f= \prod_{i=1}^m f_i^{n_i}$
where all the $f_i$ are monic, pairwise coprime. Use Lemma \ref{0}ib and Lemma \ref{4} (i and ii) to see that the $n_i$ satisfy the requirements. 

iii $\implies$ i: Assume without loss of generality that this special $i$ is $m$ and
consider
$F=\prod_{i=1}^{m-1}R[x]/(f_i)$. We can now use Lemma \ref{4}iii, Lemma \ref{11} and Lemma \ref{12} inductively to see that $F$ is a futile
$R$-algebra.
Consider $F \times R[x]/(f_m)^{n_m}$. An $R$-subalgebra of $R[x]/(f_m)^{n_m}$ is isomorphic to $R$, $R[x]/(x^2)$ or $R[x]/(x^3)$ (Lemma
\ref{4}i). All these rings have finitely many quotients. We can again apply Lemma \ref{11} and Lemma \ref{12} to finish the proof.

iii $\implies$ ii: This is obvious if one uses the Chinese remainder theorem and if one takes $A'=R[x]/(f_i)^{n_i}$ for the special $i$ if it
occurs and $A'=0$ otherwise. 

ii $\implies$ iii: We may assume that $A'$ is local or $0$, since otherwise $A'=A'' \times R$ and we can put this $R$ in $\prod_i A_i$. We will first
see what such an $A'$ can be. Let $\mathrm{dim}_R(A')=r$. If $r=0$, then we obtain $A'=0$.
If $r=1$, then we find $A'=R$.
If $r=2, 3$, notice first that $\sqrt{0_A}=\mathfrak{n}$ is the maximal ideal of $A$. From our assumptions we get
$\mathrm{dim}_R(\mathfrak{n}/\mathfrak{n}^2)=1$. Using Nakayama's Lemma, we see that $\mathfrak{n}$ is principal, say $\mathfrak{n}=(a)$. Then
one has $A=R[a]$. By looking at dimension, we conclude that $A \cong R[x]/(x^r)$.   

Hence we see that $A \cong_R A'' \times \prod_{j=1}^m B_j$ where $A'' \cong_R R[x]/(x^i)$ where $i=2,3$ or $A''=0$ and the $B_j$ are primitive field
extensions of $R$. Let $f$ be an irreducible polynomial, then $R[x]/(f) \cong R[x]/(g)$ for infinitely many irreducible polynomials $g$. Indeed, for
$a \in R$ we have $R[x]/(f(x)) \cong R[x]/(f(x-a))$ and this gives us infinitely many different polynomials. Hence we can apply the Chinese remainder
theorem to see that iii holds.

\end{proof}

\subsection{Artinian rings} \label{801}

We will now consider the case where $R$ is an artinian ring. By Theorem \ref{120} we reduce directly to the case where $R$ is local.

\begin{lemma}[Nakayama] \label{96}
 Let $(R,\m)$ be a local artinian ring and let $M$ be an $R$-module. The following hold:
\begin{enumerate}
 \item Suppose that $M=\m M$. Then we have $M=0$.
 \item Suppose that $N \subseteq M$ is an $R$-submodule and suppose that $N+\m M=M$. Then we have $N=M$.
\end{enumerate}
\end{lemma}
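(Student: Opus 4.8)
The plan is to reduce everything to the single structural fact that the maximal ideal of a local artinian ring is nilpotent; this is exactly what lets us dispense with the finite-generation hypothesis on $M$ that the usual form of Nakayama's lemma requires. So I would begin by recording that, since $R$ is artinian and local, its unique maximal ideal $\m$ coincides with the Jacobson radical of $R$, and the Jacobson radical of an artinian ring is nilpotent. Hence there is an integer $n \in \Z_{\geq 1}$ with $\m^n = 0$.

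For part i the argument is then a one-line iteration: starting from $M = \m M$ and substituting the equation into itself repeatedly gives
\[
M = \m M = \m^2 M = \cdots = \m^n M = 0,
\]
the final equality holding because $\m^n = 0$.

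For part ii I would pass to the quotient $\overline{M} = M/N$ and apply part i. The hypothesis $N + \m M = M$ says precisely that $\m\,\overline{M} = \overline{M}$, so part i forces $\overline{M} = 0$, which is the assertion $N = M$. The main obstacle here is purely conceptual rather than computational: one must notice that the standard determinant / Cayley--Hamilton proof of Nakayama is unavailable, since $M$ is not assumed finitely generated, and instead exploit the nilpotency of $\m$. Once that is observed, no real work remains.
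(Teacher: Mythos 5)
Your proof is correct and follows exactly the paper's argument: both parts rest on the nilpotency of $\m$ in a local artinian ring (Proposition 8.4 of Atiyah--Macdonald), with part i obtained by iterating $M=\m M$ down to $\m^n M=0$ and part ii by applying part i to $M/N$. No differences worth noting.
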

\begin{proof}
 i. Note that $\m$ is nilpotent, say $\m^n=0$ (Proposition 8.4 from \cite{AT}). Then $M=\m M = \m^2 M = \ldots = \m^n M=0$. 
 
 ii. Apply i to $M'=M/N$. 
\end{proof}

Recall that an $R$-module $M$ is called uniserial if the set of $R$-submodules of $M$ is linearly ordered by inclusion.

\begin{lemma} \label{90}
 Let $(R,\m)$ be a local artinian ring and let $M$ be an $R$-module. Let $k=R/\m$. Then the following conditions are equivalent:
\begin{enumerate}
 \item $M$ is uniserial;
 \item $M$ is uniserial of finite length;  
 \item for all $n \in \Z_{\geq 0}$ we have $\mathrm{dim}_k(\m^n M /\m^{n+1}M) \leq 1$;
 \item for all $n \in \{0,1\}$ we have $\mathrm{dim}_k(\m^n M /\m^{n+1}M) \leq 1$.
\end{enumerate}
\end{lemma}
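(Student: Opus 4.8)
The plan is to prove the four conditions equivalent via the cycle (ii) $\Rightarrow$ (i) $\Rightarrow$ (iii) $\Rightarrow$ (ii), together with the separate implication (iv) $\Rightarrow$ (iii) (since (iii) $\Rightarrow$ (iv) is trivial). Throughout one notes that each quotient $\m^n M/\m^{n+1}M$ is annihilated by $\m$, hence is a $k$-vector space, so its dimension is meaningful. The implication (ii) $\Rightarrow$ (i) just forgets the finiteness of length, and (iii) $\Rightarrow$ (iv) just restricts to $n \in \{0,1\}$. For (i) $\Rightarrow$ (iii) I would argue by contraposition: if $\dim_k(\m^n M/\m^{n+1}M) \geq 2$ for some $n$, pick $x,y \in \m^n M$ whose images are $k$-linearly independent; then $\m^{n+1}M + Rx$ and $\m^{n+1}M + Ry$ are two $R$-submodules of $M$, neither contained in the other (a containment would force $\bar x = \bar r\, \bar y$ in $\m^n M/\m^{n+1}M$ with $\bar r \in k$), so $M$ fails to be uniserial.

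For (iii) $\Rightarrow$ (ii) I would treat finiteness of length and uniseriality separately. Since $\m$ is nilpotent (Proposition 8.4 of \cite{AT}), say $\m^N = 0$, the chain $M \supseteq \m M \supseteq \cdots \supseteq \m^N M = 0$ has all successive quotients of dimension $\leq 1$ by (iii), so $\mathrm{length}(M) \leq N < \infty$. To see that $M$ is uniserial, I would show that every nonzero submodule $N$ equals some $\m^n M$: choosing the unique $n$ with $N \subseteq \m^n M$ but $N \not\subseteq \m^{n+1}M$, the image of $N$ in the at-most-one-dimensional space $\m^n M/\m^{n+1}M$ is nonzero, hence all of it, so $N + \m^{n+1}M = \m^n M$, i.e. $N + \m(\m^n M) = \m^n M$; Nakayama (Lemma \ref{96}(ii)) then gives $N = \m^n M$. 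As the $\m^n M$ form a chain, the submodules of $M$ are linearly ordered.

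The one substantial implication is (iv) $\Rightarrow$ (iii), where the dimension bound at $n = 0,1$ must be propagated to all $n$. First, $\dim_k(M/\m M) \leq 1$ and Nakayama (Lemma \ref{96}) force $M = 0$ (nothing to prove) or $M = Rm$ cyclic. Assuming $M = Rm$, the bound $\dim_k(\m M/\m^2 M) \leq 1$ together with Nakayama shows $\m M$ is cyclic; since $\m M = \m m$, a generator may be taken of the form $am$ with $a \in \m$, so that every $bm$ with $b \in \m$ lies in $Ram$. I would then prove by induction on $n$ that $\m^n M = R a^n m$ for all $n$, the inductive step using precisely the relation $bm \in Ram$ to rewrite $b a^n m$ as a multiple of $a^{n+1}m$. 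Each $\m^n M/\m^{n+1}M$ is then a cyclic $R$-module annihilated by $\m$, hence of $k$-dimension $\leq 1$, which is (iii). Equivalently, one may argue through the associated graded module $\bigoplus_n \m^n M/\m^{n+1}M$ over $\gr(R) = \bigoplus_n \m^n/\m^{n+1}$: it is generated in degree $0$ by a single element (as $\dim_k M/\m M \leq 1$), hence is cyclic, and since $\gr(R)$ is generated in degree $1$ a codimension-$\leq 1$ condition in degree $1$ forces the same in every degree. I expect this propagation step to be the main obstacle, as the naive induction on $\m M$ is circular; the remaining implications reduce to repeated application of Nakayama's lemma.
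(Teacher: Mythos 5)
Your proof is correct and follows essentially the same route as the paper: both hinge on applying Nakayama's lemma to the $n=0$ and $n=1$ conditions to make $M$ and $\m M$ cyclic and then propagating the dimension bound to all powers of $\m$. The only difference is organizational --- the paper's step labelled (iii) $\Rightarrow$ (ii) in fact uses only (iv) (thereby closing the cycle) and quotes the linear ordering of ideals in a zero-dimensional principal ideal ring as an easy fact, where you carry out the corresponding induction $\m^n M = Ra^n m$ explicitly.
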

\begin{proof}
 i $\implies$ iii: Otherwise we have submodules between $\m^{n+1}M$ and $\m^n M$ without inclusions. 

 iii $\implies$ iv: Obvious.

 iii $\implies$ ii: Assume that $M \neq 0$. The case $n=0$ together with Lemma \ref{96} show that $M \cong_R R/I$ for some $R$-ideal $I$. The second
condition, by Lemma
\ref{96}, shows that $R/I$ is a principal ideal ring. Since an artinian ring has finite length, $M$ has finite length. One can easily prove that a
zero dimensional principal ideal ring has only finitely many ideals,
which are ordered linearly by inclusion, and hence that $M$ is uniserial.

ii $\implies$ i: Trivial. 
\end{proof}

\begin{remark} \label{133}
Let $(R,\m)$ be a local artinian ring and let $M$ be a uniserial $R$-module. Then the submodules of $M$ are just $M \supseteq \m M \supseteq \m^2 M
\supseteq \ldots$. 
\end{remark}

\begin{lemma} \label{93}
Let $f:B \to A$ be a morphism between artinian rings. For $\qa \in \mathrm{Spec}(B)$ we have 
\begin{eqnarray*}
A_{\qa} \cong \prod_{\pa \in \mathrm{Spec}(B): f^{-1}(\pa)=\qa} A_{\pa}.
\end{eqnarray*}
Furthermore, we have 
\begin{eqnarray*}
A= \prod_{\qa \in \mathrm{Spec}(B)} A_{\qa}. 
\end{eqnarray*}
\end{lemma}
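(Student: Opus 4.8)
The plan is to reduce everything to the structure theorem for artinian rings. Since $A$ is artinian, it decomposes as a finite product $A \cong \prod_{\pa \in \mathrm{Spec}(A)} A_{\pa}$ over its finitely many prime ideals, each factor $A_{\pa}$ being local artinian with nilpotent maximal ideal $\pa A_{\pa}$ (Theorem 8.7 from \cite{AT}). Because $B$ is artinian as well, $\mathrm{Spec}(B)$ is also finite, so every product in sight is finite and no convergence issue arises. Here $A_{\qa}$ means the localization $S^{-1}A$ of $A$ as a $B$-algebra at $\qa$, i.e.\ with $S=f(B\setminus\qa)$.

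For the first displayed isomorphism, fix $\qa \in \mathrm{Spec}(B)$ and note that $S=f(B\setminus\qa)$ is multiplicative because $\qa$ is prime. Localization commutes with finite products, so $A_{\qa} \cong \prod_{\pa} S^{-1}(A_{\pa})$, and it remains to decide which factors survive. If $f^{-1}(\pa) = \qa$, then every $s = f(b)$ with $b \notin \qa$ satisfies $s \notin \pa$, hence its image is a unit in the local ring $A_{\pa}$, giving $S^{-1}(A_{\pa}) = A_{\pa}$. If instead $f^{-1}(\pa) = \qa' \neq \qa$, I use that in the artinian ring $B$ every prime is maximal, so $\qa$ and $\qa'$ are comaximal and I may pick $b \in \qa'$ with $b \notin \qa$. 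Then $f(b) \in S$ lies in $\pa$, hence in the nilpotent maximal ideal $\pa A_{\pa}$, so some power $f(b)^n$, still in $S$, vanishes in $A_{\pa}$ and forces $S^{-1}(A_{\pa}) = 0$. Combining the two cases yields exactly $A_{\qa} \cong \prod_{\pa : f^{-1}(\pa) = \qa} A_{\pa}$.

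For the second display I regroup the decomposition of $A$ by the fibres of the map $\mathrm{Spec}(A) \to \mathrm{Spec}(B)$, $\pa \mapsto f^{-1}(\pa)$: partitioning the primes $\pa$ according to their image and applying the first statement to each fibre gives $A = \prod_{\pa} A_{\pa} = \prod_{\qa \in \mathrm{Spec}(B)}\big(\prod_{\pa : f^{-1}(\pa) = \qa} A_{\pa}\big) = \prod_{\qa \in \mathrm{Spec}(B)} A_{\qa}$. Primes $\qa$ not in the image contribute empty products, that is zero factors, and so do not affect the result.

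The only genuinely delicate step is the vanishing $S^{-1}(A_{\pa}) = 0$ when $\pa$ does not lie over $\qa$; it hinges on the two artinian facts that distinct primes of $B$ are comaximal and that the maximal ideal of $A_{\pa}$ is nilpotent, which together produce an element of $S$ that becomes zero after localizing. Everything else is the routine interplay of finite products with localization.
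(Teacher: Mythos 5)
Your proof is correct. It takes a somewhat different route from the paper's (very terse) argument: the paper proves the first isomorphism by observing that $A_{\qa}$ is itself artinian (or zero) and applying the structure theorem to \emph{it}, decomposing it over its own primes, which are exactly the primes of $A$ lying over $\qa$; and it gets the second display by writing $B=\prod_{\qa}B_{\qa}$ and base-changing via $A=A\otimes_B B$. You instead apply the structure theorem to $A$ once and for all, localize each local factor $A_{\pa}$ at $f(B\setminus\qa)$, and determine explicitly which factors survive --- the key points being that elements of $S$ are already units in $A_{\pa}$ when $f^{-1}(\pa)=\qa$, and that comaximality of distinct primes of $B$ plus nilpotence of $\pa A_{\pa}$ kills the other factors; the second display then falls out by regrouping the fibres rather than by a tensor-product argument. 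Both proofs rest on the same structure theorem; yours is longer but more self-contained, making explicit the vanishing of the off-fibre factors that the paper leaves implicit in the phrase ``artinian (or zero),'' while the paper's tensor-product formulation of the second claim is slicker but hides the same bookkeeping.
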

\begin{proof}
 The first statement follows from the fact that $A_{\qa}$ is artinian (or zero), and hence a product of the localization at its prime ideals. The second
statement follows from $B=\prod_{\qa \in \mathrm{Spec}(B)} B_{\qa}$ and $A=A \otimes_B B$. 
\end{proof}

\begin{lemma} \label{94}
 Let $R$ be an artinian ring and let $A$ be a commutative $R$-algebra, finitely generated as $R$-module. Then we have the following bijection:
\begin{eqnarray*}
\varphi: \{R\textrm{-}\mathrm{subalgebras\ of\ }A\} &\to& \{(\sim,(B_{[\pa]})_{[\pa] \in \mathrm{spec}(R)/\sim}): \sim \textrm{equiv rel on }\mathrm{Spec}(A), \\
&& B_{[\pa]} \mathrm{\ a\ local\ } R\mathrm{-subalgebra\ of\ }\prod_{\qa \in [\pa]} A_{\qa} \}
\end{eqnarray*}
given by
\begin{eqnarray*}
B &\mapsto& ( \pa \sim \qa \mathrm{\ iff\ }\pa \cap B=\qa \cap B,  (B_{\pa \cap B})_{[\pa] \in \mathrm{Spec}(A)/\sim}).
\end{eqnarray*}
\end{lemma}
\begin{proof}
First note that any subalgebra of $A$ is artinian.
That the map makes sense, follows from Lemma \ref{93} and exactness of localization.
We will construct an inverse $\psi$ of the map above. It maps $(\sim,(B_{[\pa]})_{[\pa] \in \mathrm{spec}(R)/\sim})$ to $\prod_{[\pa] \in
\mathrm{Spec}(A)/\sim}  B_{[\pa]}$. As $B$ is artinian, one easily sees $\psi \circ \varphi(B)=B$ (Lemma \ref{93}). It also follows easily that the
other composition is the identity. 
\end{proof}

We have the following reduction theorem.

\begin{proposition} \label{105}
Let $(R,\m)$ be a local artinian ring such that $k=R/\m$ is infinite and let $A$ be an $R$-algebra. Then the following properties are
equivalent.
\begin{enumerate}
 \item $A$ is a futile $R$-algebra;
 \item $A$ is commutative, $A/\m A$ is a futile $k$-algebra, $\m (A/R)$ is a uniserial $R$-module and $R+\sqrt{0_A} \subseteq A$ is a futile $R$-algebra.
\end{enumerate}
\end{proposition}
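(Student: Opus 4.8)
The plan is to prove the two implications separately; the forward direction is short and the converse carries the real content.

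\medskip
\noindent$(i)\Rightarrow(ii)$. Since $R$ is local artinian with infinite residue field $k=R/\m$, condition (iv) of Theorem \ref{24} holds, so every futile $R$-algebra is commutative; in particular $A$ is commutative. That $A/\m A$ is $k$-futile is Lemma \ref{0}ic with $I=\m$, and $T=R+\sqrt{0_A}$ is $R$-futile as an $R$-subalgebra of $A$ (Lemma \ref{0}ia). The only substantial point is that $\m(A/R)$ is uniserial, and here is where I would use that $k$ is infinite. By Lemma \ref{90} it is enough to check $\dim_k\big(\m^{j}(A/R)/\m^{j+1}(A/R)\big)\le 1$ for all $j\ge 1$. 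For each such $j$, every $R$-submodule $W$ with $R+\m^{j+1}A\subseteq W\subseteq R+\m^{j}A$ is automatically an $R$-subalgebra: writing $W=R+M'$ with $M'=W\cap\m^jA$, one has $M'\cdot M'\subseteq \m^{2j}A\subseteq\m^{j+1}A\subseteq M'$ because $2j\ge j+1$, so $W$ is closed under multiplication. These $W$ are in bijection with the $k$-subspaces of $\m^{j}(A/R)/\m^{j+1}(A/R)$. If any of these quotients had dimension $\ge 2$, then, $k$ being infinite, it would have infinitely many subspaces and hence $A$ would have infinitely many $R$-subalgebras, contradicting (i).

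\medskip
\noindent$(ii)\Rightarrow(i)$. First I would record that $A$ is a finite $R$-module: $A/\m A$ is $k$-futile, hence finite-dimensional over $k$ by Theorem \ref{31}, so $A/(R+\m A)=(A/R)/\m(A/R)$ is finite over $k$; as $\m(A/R)$ has finite length (Lemma \ref{90}), $A/R$ and hence $A$ has finite length over $R$. Thus $A$ is a commutative artinian ring finite over $R$, and we must bound its $R$-subalgebras. Since $\m A$ is nilpotent, idempotents lift from $A/\m A$, and the decomposition of $A/\m A$ into local components lifts to $A\cong\prod_j A_j$ with each $A_j$ local and $A_j/\m A_j$ a local component of $A/\m A$. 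All four hypotheses in (ii) pass to each quotient $A_j$ of $A$ (Lemma \ref{0}; uniseriality passes to the quotient module $\m(A_j/R)$), and by Theorem \ref{3} at most one of the components $A_j/\m A_j$ fails to be a field, that one having $k$-dimension at most $3$.

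\medskip
\noindent Now the counting. A subalgebra of a product is described by Goursat's lemma (Lemma \ref{5}, or Lemma \ref{94} in the artinian setting), so I would reduce, via Lemma \ref{11}, to showing that each local factor is $R$-futile and that the gluing data between factors is finite. For a single local factor $A$ (dropping the index), every $R$-subalgebra $B$ is local with $B\cap\sqrt{0_A}=\sqrt{0_B}$ and residue field $F_B=B/\sqrt{0_B}$ a subfield of $F=A/\sqrt{0_A}$. One checks $B\cap T=R+\sqrt{0_B}$, so the assignment $B\mapsto (B\cap T,\ B+\sqrt{0_A})$ sends $B$ to a pair whose first coordinate ranges over the finitely many $R$-subalgebras of the futile algebra $T$ and whose second over the finitely many intermediate fields of the primitive (by Theorems \ref{401}, \ref{3}) extension $F/k$. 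The problem is thereby reduced to bounding the fibres of this map: given $\sqrt{0_B}$ and $F_B$, the subalgebras $B$ correspond to the ring-theoretic sections of $(B+\sqrt{0_A})/\sqrt{0_B}\twoheadrightarrow F_B$, and I would bound their number using that $\m(A/R)$ is uniserial, which makes the nilpotent module into which two sections can differ as small as possible.

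\medskip
\noindent The main obstacle is exactly this last finiteness of sections, equivalently the finiteness of the Goursat gluing automorphisms. The danger is concrete: a local factor carrying a nontrivial square-zero nilpotent can have infinite $R$-automorphism group (already $\Aut_k(k[x]/(x^2))$ is infinite when $k$ is infinite), which would destroy futility. What prevents this is the interplay of the three remaining hypotheses: the futility of $A/\m A$ forbids, by Theorem \ref{3}, two different factors from simultaneously carrying such nilpotent directions; the uniseriality of $\m(A/R)$ collapses the module into which sections can differ; and the futility of $T$ pins down the purely nilpotent local data. The genuinely delicate case, where these must be combined most carefully, is when the residue extension $F_B/k$ is purely inseparable, since then derivations into the nilpotent module need not vanish and the section need not be unique; this is precisely the situation the $\dim_R(A')\le 3$ and $\sqrt{0_{A'}}^2\ne 0$ bookkeeping of Theorem \ref{3} is designed to handle.
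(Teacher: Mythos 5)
Your forward implication is correct and is essentially the paper's own argument: commutativity from Theorem \ref{24}, the two futility statements from Lemma \ref{0}, and uniseriality of $\m(A/R)$ by observing that every intermediate $R$-module between $R+\m^{j+1}A$ and $R+\m^j A$ (for $j\ge 1$) is automatically a subalgebra, so a graded piece of dimension $\ge 2$ would give infinitely many subalgebras over the infinite field $k$.

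The converse, however, has a genuine gap, and you say so yourself: you reduce the problem to bounding the fibres of $B\mapsto(B\cap T,\ B+\sqrt{0_A})$, i.e.\ to the finiteness of ring-theoretic sections of $(B+\sqrt{0_A})/\sqrt{0_B}\twoheadrightarrow F_B$, and then declare this ``the main obstacle'' without resolving it. Gesturing at ``the interplay of the three remaining hypotheses'' is not a proof, and the difficulty you flag (infinite automorphism groups, derivations into the nilpotent module when $F_B/k$ is inseparable) is real for the route you chose. The paper avoids this entire issue by a different dichotomy. First, Lemma \ref{94} (not Goursat plus Lemma \ref{11}) converts the problem into counting \emph{local} $R$-subalgebras of each product $A'=\prod_{\pa\in S}A_\pa$, $S\subseteq\Spec(A)$ --- the ``gluing data'' is just an equivalence relation on the finite set $\Spec(A)$, so no automorphism counting is needed. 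Then, for a local subalgebra $(B,\mathfrak{n})$ of such an $A'$, either $B\supseteq\m A'$, in which case $B$ is the preimage of one of the finitely many subalgebras of the futile $k$-algebra $A'/\m A'$; or $B\not\supseteq\m A'$, and here the uniseriality of $\m(A'/R)$ gives the chain $\m A'\supsetneq\m^2A'+\m\supseteq B\cap\m A'\supseteq\m$, whence $\m^2A'+\m=\m^2A'+(B\cap\m A')$ is a $B$-module and $\m A'/(\m^2A'+\m)\cong k$ is a simple $B$-module; this forces $B/\mathfrak{n}\cong k$ and hence $B\subseteq R+\sqrt{0_{A'}}$, which is futile by hypothesis. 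This step --- showing that a local subalgebra not containing $\m A'$ must have residue field $k$ and so lands inside $T$ --- is exactly the idea your argument is missing; without it (or a worked-out substitute for your section-counting) the implication (ii) $\Rightarrow$ (i) is not established.
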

\begin{proof}
 i $\implies$ ii: We need to show that the four properties hold. Number one follows from Theorem \ref{24}. Number two and four follow from Lemma
\ref{0}ia,ib. We will show that $\m( A/R)$ is a uniserial $R$-module by showing that iii from Lemma \ref{90} is satisfied. Let $n \in \Z_{\geq 1}$.
Note that the $R$-submodules of $\m^n
(A/R) /\m^{n+1} (A/R)$ correspond bijectively to $R$-submodules of $\left(\m^n A +R \right) / \left(\m^{n+1} A +R \right)$. Let $L$ be an $R$-module such that $\m^{n+1}A+R \subseteq L \subseteq \m^n A+R$. We claim that $L$ is an $R$-algebra. If $x+r, x'+r' \in L$, where
$x,x' \in \m^n A$, $r,r' \in R$, then $(x+r)(x'+r')=xx'+rx'+r'x+rr'$. Note that $xx' \in \m^{n+1}A$ as $n \geq 1$, $rx', r'x \in L$ as $L$ is an
$R$-module and that $rr' \in R$. Hence $L$ is indeed a ring. If $\mathrm{dim}_k( \left(\m^{n+1}A+R\right)/ \left(\m^n A +R \right)) >1$, then there
are infinitely many such $L$ since $k$ is infinite, which gives a contradiction with the assumption that $A$ is $R$-futile. 

 ii $\implies$ i: Let $B$ be an $R$-subalgebra of $A$. From Theorem \ref{3} we deduce that the futile $k$-algebra $A/\m A$ is finite as $k$-module.
From Nakayama's lemma (Lemma \ref{96}) it follows
that $A$ is a finite $R$-module. It follows that $B$ is artinian as well. 

Step i: We show that there are only finitely many local $R$-subalgebras of $A$. Let $(B,\mathfrak{n})$ be such a local $R$-subalgebra. 
Suppose that $B \supset \m A$. Then we have $B/\m A \subseteq A/\m A$ and there are only finitely many such $B$ by the assumption that $A/\m A$ is a
futile
$k$-algebra. 
Assume that $B \not \supset \m A$. Notice first that the map $\m A/\m \to (\m A + R )/R=\m(A/R)$ is an isomorphism (since $\m A \cap R = \m$,
look at nilpotents). Note that from $\m \neq \m A$ and Lemma \ref{96} one obtains $\m A \supsetneq \m^2 A+\m$. 
Hence by the uniseriality we have a chain $\m A \supsetneq \m^2 A +\m \supseteq B \cap \m A \supseteq \m$ (see Remark \ref{133}). From this we see that
$\m^2 A+ \m =\m^2 A+ \left(B \cap \m A\right)$ and the latter is obviously a $B$-module. Also $\m A$ is a $B$-module and it follows that $\m A/(\m^2 A +\m)
\cong_R k$ is a simple $R$-module and hence a simple $B$-module. Hence $B/\mathfrak{n} \cong k$ and it follows that $B \subseteq R + \sqrt{0_A}$. By
assumption we have only finitely many such $R$-algebras and this finishes the first step.

Step ii: From Lemma \ref{94} it is enough, as $\mathrm{Spec}(A)$ is finite, to show that there are only finitely many local subalgebras of $A'=\prod_{\pa
\in S} A_{\pa}$ for $S \subseteq \mathrm{Spec}(A)$. We show that $R \to A'$ still satisfies the conditions of ii, and then we are done by step i. Write $A=A' \times A''$. One has $A/\m
A = A'/\m A' \times A''/\m A''$ and hence $A'/\m A'$ is still a futile $k$-futile. We have a surjective map $\m (A/R) \to \m(A'/R)$ and hence 
$\m(A'/R)$ is still a uniserial $R$-module. Furthermore, we have a natural surjective morphism of $R$-algebras $R + \sqrt{0_A} \to R + \sqrt{0_{A'}}$
(obtained from the maps $R + \sqrt{0_A} \to A=A' \times A'' \to A'$). From Lemma \ref{0}ia it follows that $R+\sqrt{0_{A'}}$ is $R$-futile.
\end{proof}

Note that in the previous statement $R+\sqrt{0_A}$ is a local commutative $R$-algebra. The next proposition handles this case.

\begin{proposition} \label{122}
Let $(R,\m)$ be a local artinian ring such that $k=R/\m$ is infinite. Let $(A,\mathfrak{n})$ be a commutative local $R$-algebra with $A/\mathfrak{n}
=k$. Put $r_A= \mathrm{dim}_{k}(\sqrt{0_{A/\m A}})$. Then the following conditions are
equivalent.
\begin{enumerate}
 \item $A$ is a futile $R$-algebra;
 \item $A/\m A$ is a futile $k$-algebra, $\m (A/R)$ is a uniserial $R$-module, and if $r_{A}=2$, then
$\mathfrak{n}^4+\mathfrak{n}^2\m+\m=\m A$.
\end{enumerate}
\end{proposition}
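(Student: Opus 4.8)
The plan is to stratify the $R$-subalgebras of $A$ by their image in the residue algebra $A/\m A$. In both directions $A/\m A$ is a futile $k$-algebra---by Lemma \ref{0}ic when $A$ is futile, and by hypothesis in $(ii)$---so Theorem \ref{3} makes it finite-dimensional over $k$; lifting a $k$-basis to generators of $A$ and invoking Nakayama (Lemma \ref{96}, which needs no finite generation because $\m$ is nilpotent) shows that $A$ is a finite $R$-module, hence artinian with $\mathfrak{n}=\sqrt{0_A}$ and $A=R+\mathfrak{n}$. Every $R$-subalgebra of $A$ is then local, its idempotents being idempotents of the local ring $A$. Finally, since $A/\m A$ is local, futile, and has residue field $k$, Theorem \ref{3} pins it down to $k$, $k[x]/(x^2)$, or $k[x]/(x^3)$, that is $r_A\in\{0,1,2\}$, with $r_A=2$ corresponding exactly to $A/\m A\cong k[x]/(x^3)$ and subalgebras listed by Lemma \ref{4}.

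For $(i)\Rightarrow(ii)$, commutativity and futility of $A/\m A$ together with the uniseriality of $\m(A/R)$ follow at once by applying Proposition \ref{105} to $A$, which is $R$-futile by hypothesis. The one genuinely new assertion is the chain condition when $r_A=2$, and here I would argue contrapositively. Fix $\alpha\in\mathfrak{n}$ lifting a generator of $A/\m A\cong k[x]/(x^3)$ and, assuming $\mathfrak{n}^4+\mathfrak{n}^2\m+\m\subsetneq\m A$, choose $w\in\m A$ whose class is nonzero in $\m A/(\mathfrak{n}^4+\mathfrak{n}^2\m+\m)$. For $c$ ranging over representatives in $R$ of distinct elements of $k$, consider the subalgebras $R[\alpha^2+cw]$, all of which have image $k\oplus k\bar\alpha^2$ in $A/\m A$. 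Expanding $(\alpha^2+cw)^2=\alpha^4+2c\,w\alpha^2+c^2w^2$ and using $w\alpha^2\in\mathfrak{n}^2\m$ and $w^2\in\m^2A$, one checks that the failure of the chain condition keeps $w$ out of $R[\alpha^2+cw]$; since distinct residues make the differences $c-c'$ units, these subalgebras are pairwise distinct, and as $k$ is infinite this contradicts $R$-futility.

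For $(ii)\Rightarrow(i)$ I would bound the subalgebras $B$ with $R\subseteq B\subseteq A$ by recording their images $\bar B\subseteq A/\m A$. Futility of $A/\m A$ leaves only finitely many possible $\bar B$, explicitly the ones of Lemma \ref{4}, so it suffices to bound the $B$ lying over each fixed $\bar B$. The uniseriality of $\m(A/R)$ (via Lemma \ref{90} and Remark \ref{133}) forces $B\cap\m A$ into a single totally ordered chain of $R$-submodules, and this already determines the lifts when $r_A\le1$, where no further hypothesis is needed. The only delicate fibre is, for $r_A=2$, the one over $\bar B=k\oplus k\bar\alpha^2$: writing such a $B$ as $R[\alpha^2+m]$ with $m\in\m A$, the equality $\mathfrak{n}^4+\mathfrak{n}^2\m+\m=\m A$ is exactly what forces the elements $w$ of the previous paragraph back into $R[\alpha^2+m]$, collapsing the putative $k$-parameter to finitely many subalgebras and yielding futility.

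The main obstacle is precisely this $r_A=2$ analysis: establishing the sharp equivalence between the single closed condition $\mathfrak{n}^4+\mathfrak{n}^2\m+\m=\m A$ and the finiteness of the family of subalgebras over $k\oplus k\bar\alpha^2$. The difficulty is that the relevant element $\alpha^2+m$ and its powers must be tracked through the two filtrations---by powers of $\mathfrak{n}$ and by powers of $\m$---simultaneously, and one must isolate the degree-four contribution $\alpha^4$ arising from squaring $\alpha^2$ as exactly the term the chain condition has to absorb. I expect the cleanest formulation is to identify the fibre over $k\oplus k\bar\alpha^2$ with the deformation space $\m A/(\mathfrak{n}^4+\mathfrak{n}^2\m+\m)$ as a $k$-vector space, and to show this space is nonzero if and only if the infinite family of the second paragraph exists; verifying the two containments that make up the displayed equality, against the two interacting filtrations, is the real work.
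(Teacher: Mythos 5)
Your stratification of the subalgebras $B$ by their image in $A/\m A$ is exactly the paper's strategy, and the fibres where the image is $k$ or all of $A/\m A$ are handled correctly (uniseriality of $\m(A/R)$ plus Nakayama). The gap is precisely where you locate it yourself: the fibre over $k\oplus k\bar{\alpha}^2$ when $r_A=2$, in the direction (ii)$\Rightarrow$(i). Two concrete problems. First, a subalgebra $B$ with image $k[\bar{\alpha}^2]$ need not equal $R[\alpha^2+m]$ for a single $m\in\m A$; one only gets $B=R[\alpha^2+m]+(B\cap\m A)$, so the pair $(m,\,B\cap\m A)$ must be controlled jointly. Second, and more seriously, you never prove that $\mathfrak{n}^4+\mathfrak{n}^2\m+\m=\m A$ collapses this fibre to finitely many subalgebras --- your final paragraph explicitly defers ``the real work'' of the two containments. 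This is not a routine verification: the cross term $2\alpha^2m$ in $(\alpha^2+m)^2$ can vanish (characteristic $2$, or $m\in\m^2A$), so the naive argument that $(\alpha^2+m)^2$ generates $\m A$ modulo $\m$ fails in general, and one must also exploit $\m_B\cdot\m A\subseteq B$. The paper resolves this by introducing $A'=R+\mathfrak{n}^2+\m A$, noting $B\subseteq A'$ and $t=\dim_k(A'/\m A')\in\{2,3\}$: if $t=2$ it recurses into the already-settled cases applied to $A'$, and if $t=3$ the displayed equality forces $A'/\m A'\cong k[x]/(x^3)$, after which surjectivity of $\m_B\to\m_{A'}/\m_{A'}^2$ together with Lemma 7.4 of \cite{HA} and Nakayama yields $B=A'$. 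Some argument of this strength must be supplied; without it the implication is unproved.

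Your contrapositive for (i)$\Rightarrow$(ii) when $r_A=2$ --- the infinite family $R[\alpha^2+cw]$ indexed by residues $c$ --- is a genuinely different and salvageable idea (the paper instead derives a contradiction by applying Theorem \ref{3} to $A'/\m A'$). But as written it too has a loose end: to conclude $w\notin R[\alpha^2+cw]$ you need $w$ outside $\mathfrak{n}^4+\mathfrak{n}^2\m+\m^2A+\m$, not merely outside $\mathfrak{n}^4+\mathfrak{n}^2\m+\m$, because $(\alpha^2+cw)^2$ and the products $r_1(\alpha^2+cw)$ with $r_1\in\m$ contribute terms in $\m^2A$. One must first check, using uniseriality and Nakayama, that $\mathfrak{n}^4+\mathfrak{n}^2\m+\m\subsetneq\m A$ forces $\mathfrak{n}^4+\mathfrak{n}^2\m+\m^2A+\m\subsetneq\m A$ as well; with that repair, and with ``one checks'' actually carried out, this direction goes through.
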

\begin{proof}
ii $\limplies$ i: From Theorem \ref{3} it follows that $r_A \in
\{0,1,2\}$.

Let $B \subseteq A$ be an $R$-subalgebra. Let $\varphi_B: B \to A/\m A$ be the natural map. For all of the finitely many $k$-subalgebras $S$ of $A/\m A$ we show that there are only finitely many $B$ such that $\mathrm{Im}(\varphi_B)=S$. 

Suppose that $\mathrm{Im}(\varphi_B)=A/\m A$. It follows from Lemma \ref{96} that $A=B$.

Suppose that $\mathrm{Im}(\varphi_B)=k$. Then we have $B=R+(B \cap \m A)$. But $\m \subseteq B \cap \m A \subseteq \m A$, and as $\m A/\m=\m(A/R)$ is
a uniserial $R$-module, there are only finitely many options for $B \cap \m A$ and hence for $B$.

Suppose that $\mathrm{Im}(\varphi_B) \neq k, A/\m A$. Then we know from Theorem \ref{3} that $A/\m A \cong_k k[x]/(x^3)$, that
$r_A=2$, and that $\mathrm{Im}(\varphi_B)=k[x^2] \subset k[x]/(x^3)$ (Lemma \ref{4}). It follows that $B \subseteq
\varphi_B^{-1}(k[x^2])=R+\mathfrak{n}^2+\m A=:A'$,
the latter being
an local $R$-algebra with maximal ideal $\m_{A'}=\mathfrak{n}^2+\m A$. By construction we have $A'/\m A \cong_k k[x^2] \subset k[x]/(x^3)$, which is
of dimension $2$ over $k$. By the uniseriality
assumption we have $\mathrm{dim}_k(\m
A/(\m^2 A+\m))= \mathrm{dim}_k \left(\m (A/R)/\m^2(A/R)\right) \leq 1$. We have $\m A'=\m+\m \mathfrak{n}^2+\m^2 A$. Notice that $t=\mathrm{dim}_k(A'/\m A')=2+\mathrm{dim}_k(\m A/\left( \m + \m \mathfrak{n}^2+\m^2 A \right)) \leq 3$. 
Notice that $t=3$ iff $\m^2 A+\m \subsetneq \m A$ and $\m \mathfrak{n}^2 \subseteq \m^2A+\m$. 

Assume first that $t=2$. Then we have $A'/\m A' \cong_k k[x^2] \subset k[x]/(x^3)$. Notice furthermore that  $\m (A'/R) \subseteq \m (A/R)$ is uniserial and $A'/\m A'$ is a futile
$k$-algebra by Theorem \ref{3}. As $B \subseteq A'$, there are only finitely many options for $B$ by the cases where $\mathrm{Im}(\varphi_B)=k,A/\m
A$.

Assume that $t=3$. By Theorem \ref{3} the ring $A'/\m A'$ is $R$-futile if and only if the square of its maximal ideal is not zero. This is equivalent
to $\m_{A'}^2=\mathfrak{n}^4+\m^2 A + \mathfrak{n}^2 \m \not \subset \m A'=\m^2 A + \m$, and this holds by assumption. In this case, $A'/\m A'
\cong k[x]/(x^3)$. The map $B \to A'/\m A'$ is local, induces an isomorphism on the residue field, and $A'/\m A'$ is a finitely generated $B$-module.
Let $\m_B$ be the maximal ideal of $B$ (it is local by integrality). From $\varphi_B$ one gets $\m_B+\m A=\m_{A'}$ and from the map $A'/\m A' \to
A'/\m A$ one gets $\m A=\m A'+\m_{A'}^2$. Combining these gives that the map $\m_{B} \to \m_{A'}/\m_{A'}^2$ is surjective.
By Lemma 7.4 from \cite{HA}, the map $B \to A'/\m A'$ is surjective. From Nakayama's Lemma (Lemma \ref{96}) we conclude that $B=A'$.

i $\implies$ ii: The first three parts follow from Proposition \ref{105}. Assume $r_A=2$. Note that we have an inclusion
$\mathfrak{n}^4+\mathfrak{n}^2\m+\m^2A+\m\subseteq \m
A$ (Theorem \ref{3}, as $\mathfrak{n}^3 \subseteq \m A$). From the uniseriality it follows that either $\m^2 A+\m = \m A$, or that for every $x \in \m
A \setminus
\left(\m^2A +\m \right)$ we have $\m A=\m^2 A+ Rx$. So we are done unless $\mathfrak{n^4}+\mathfrak{n}^2 \m \subseteq \m^2 A+\m$ and $\m^2 A+\m
\subsetneq \m A$. Assume that we are in this case and consider
the ring $A'=R+\mathfrak{n}^2+\m A$ as above. Notice that $\mathrm{dim}_k(A'/\m A')=3$ (as $\m(A/R)$ is $R$-uniserial) and that $A'/\m A'$ is a local futile $k$-algebra (Lemma \ref{0}ia,ic). By Theorem \ref{3} we have $(\mathfrak{n}^2+\m A)^2
\not \subset  \m A'=\m^2 A+\m$ (Theorem \ref{3}), contradiction.
\end{proof}

 The condition $\mathfrak{n}^4+\mathfrak{n}^2\m+\m=\m A$ looks artificial, but one can give examples which show that all terms are needed.

\begin{proof}[Proof of Theorem \ref{913}]
Combine Proposition \ref{105} and Proposition \ref{122} and use that a submodule of a uniserial module is uniserial.
\end{proof}

\section{Principal ideal domains with finite quotients}

For certain rings $R$ one can find a nice description of the futile $R$-algebras. In this section we will handle the case where $R$ is a principal ideal domain with finite quotients. One can generalize this theory 
to for example discrete valuation rings, but since we have the general theory, there is no need for this. 

\begin{lemma} \label{16}
 A commutative ring $R$ that is a domain but not a field has infinitely many ideals. 
\end{lemma}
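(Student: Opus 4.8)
The plan is to exhibit an explicit infinite strictly descending chain of principal ideals. Since $R$ is a domain but not a field, there is an element $a \in R$ that is nonzero and not a unit; fix such an $a$. Because $R$ is a domain and $a \neq 0$, every power $a^n$ is again nonzero, so the principal ideals $(a^n)$ are all proper nonzero ideals. The chain of inclusions $(a) \supseteq (a^2) \supseteq (a^3) \supseteq \cdots$ is automatic from $a^{n+1} = a \cdot a^n$.

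The heart of the argument is to show each inclusion is \emph{strict}. Suppose instead that $(a^n) = (a^{n+1})$ for some $n \geq 1$. Then $a^n \in (a^{n+1})$, so $a^n = r a^{n+1}$ for some $r \in R$. Since $R$ is a domain and $a^n \neq 0$, I may cancel $a^n$ to obtain $1 = ra$, forcing $a$ to be a unit and contradicting the choice of $a$. Hence $(a^{n+1}) \subsetneq (a^n)$ for every $n$, and the ideals $(a), (a^2), (a^3), \ldots$ are pairwise distinct, giving infinitely many ideals.

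There is essentially no obstacle here: the only two points that need the hypotheses are the existence of a nonzero non-unit (which is exactly the statement that $R$ is a domain that is not a field) and the cancellation step $a^n x = a^n y \implies x = y$ (which is exactly the domain property). Both are immediate, so the proof is short and self-contained, needing none of the earlier machinery in the paper.
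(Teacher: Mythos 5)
Your proof is correct, and it is essentially the paper's argument with the black box opened: the paper disposes of the lemma in one line by citing the fact that artinian domains are fields (so a domain with only finitely many ideals, being artinian, would be a field), and the strictly descending chain $(a)\supsetneq(a^2)\supsetneq(a^3)\supsetneq\cdots$ you construct is exactly the standard proof of that cited fact. Your version is self-contained and equally valid; no gap.
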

\begin{proof}
This follows from the fact that artinian domains are fields.
\end{proof}

\begin{lemma} \label{18}
 Let $R$ be a domain that is not a field. Let $A$ be an $R$-algebra. Assume that $A$ is a futile $R$-algebra, nonzero, and torsion-free as $R$-module.
Then we have
$R \subseteq A \subseteq Q(R)=K$.  
\end{lemma}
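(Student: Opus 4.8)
The plan is to first reduce to the case where $A$ is commutative, then embed $A$ into the finite-dimensional $K$-algebra $A_K=A\otimes_R K$ and show that this algebra is one-dimensional; the real content is an analysis of the integral closure of $R$ inside $A$.

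First I would record $R\subseteq A$. Since $A\neq 0$ and $A$ is torsion-free, the structure map $\varphi\colon R\to A$ is injective: if $\varphi(r)=r\cdot 1_A=0$ with $r\neq 0$, torsion-freeness forces $1_A=0$, contradicting $A\neq 0$. Next, $A$ is automatically commutative. By Lemma~\ref{9} the commutator ideal $[A,A]$ is finite; but $R$ is an infinite domain (a finite domain is a field), and $[A,A]$ is an $R$-submodule of the torsion-free module $A$, so any nonzero $x\in[A,A]$ would give $Rx\cong R$, an infinite set sitting inside the finite set $[A,A]$. Hence $[A,A]=0$ and $A$ is commutative, which lets me use the theory of Section~\ref{66}.

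Now set $K=Q(R)$ (infinite) and $A_K=A\otimes_R K=S^{-1}A$ with $S=R\setminus\{0\}$. Torsion-freeness gives an embedding $A\hookrightarrow A_K$, and by Lemma~\ref{0}iii the $K$-algebra $A_K$ is $K$-futile, hence finite-dimensional over $K$ by Theorem~\ref{3}. Since $R\subseteq A\subseteq A_K$, the assertion $A\subseteq K$ is equivalent to $\dim_K A_K=1$, and I would prove this by contradiction. The naive attempt is to produce infinitely many $R$-subalgebras of the shape $R+\mathfrak a A$ for ideals $\mathfrak a\subseteq R$ (these are rings because $(\mathfrak a A)^2\subseteq\mathfrak a A$), detecting them by pushing forward along $A_K\to A_K/K$. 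The obstacle, and the reason a direct argument is delicate, is that when $A$ is not module-finite over $R$ (e.g.\ $A$ contains denominators, as in $\mathbf{Z}[i/2]=\mathbf{Z}[1/2][i]$) the image can collapse, so distinctness of the family $R+(t^n)A$ fails for a badly chosen $t$; one must avoid primes that become invertible in $A$.

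I would circumvent this via the integral closure $\tilde R$ of $R$ in $A$. By the decomposition theorem of Section~\ref{66}, $\tilde R$ is $R$-futile and $A$ is $\tilde R$-futile. First I claim $\tilde R\subseteq K$: if $\tilde R\neq R$, then Theorem~\ref{111} shows $R/\mathfrak c$ is artinian for the conductor $\mathfrak c$, and $\mathfrak c\neq 0$ since an artinian domain is a field; Theorem~\ref{31} then makes $\tilde R/\mathfrak c\tilde R$ a finite $R$-module, whence $\tilde R=R+\sum_i Rx_i$ is a finitely generated (torsion-free) $R$-module. On such a module the determinant trick applies: $t^n\,\overline{\tilde R}$ is strictly decreasing in $V=\tilde R_K/K$ for any nonzero non-unit $t$ (using that $t^n M=t^{n+1}M$ with $M$ finitely generated torsion-free would force $t$ to be a unit), so if $\dim_K\tilde R_K\geq 2$ the subrings $R+t^n\tilde R$ are infinitely many and distinct, contradicting the $R$-futility of $\tilde R$. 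Hence $\tilde R\subseteq K$. Finally, $\tilde R$ is integrally closed in $A$ (idempotence of integral closure), with $Q(\tilde R)=K$; if $\tilde R=A$ we are done, and otherwise Theorem~\ref{901} applies to $\tilde R\subsetneq A$ and yields $A_{\mathfrak m}=(\tilde R_{\mathfrak m})_{\mathfrak p}\subseteq K$ for every maximal ideal $\mathfrak m$ of $\tilde R$ (and $A_{\mathfrak m}=\tilde R_{\mathfrak m}\subseteq K$ off the finite support). Base-changing to the fraction field gives $A_K=A_{\mathfrak m}\otimes_{\tilde R_{\mathfrak m}}K=K$, so $\dim_K A_K=1$ and $A\subseteq K$. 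The crux of the whole argument is this reduction to $\tilde R\subseteq K$: getting module-finiteness from the conductor so that the determinant trick is available, which is exactly what the failed direct approach lacked.
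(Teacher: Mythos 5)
Your proof is correct, but it takes a genuinely different and noticeably heavier route than the paper's. Both arguments start identically: embed $A$ into $S^{-1}A$ with $S=R\setminus\{0\}$ and use Lemma \ref{0}iii together with Theorem \ref{3} to get $S^{-1}A\cong K[x]/(f)$ finite-dimensional over $K$ (your separate commutativity argument via Lemma \ref{9} works, but is not needed, since Theorem \ref{3} already makes $S^{-1}A$, hence $A$, commutative). The divergence is in ruling out $\dim_K S^{-1}A\geq 2$. The paper sidesteps the module-finiteness issue you correctly identify with a short local trick: after scaling, the generator of $S^{-1}A$ can be taken in $A$ with monic $f\in R[x]$, so $T=R[x]/(f)$ is a \emph{free} $R$-module of rank $\deg(f)$ inside $A$ that already generates $S^{-1}A$ over $K$; if $\deg(f)>1$, the subalgebras $R+IT$ recover $I$ as $\Ann_R(T/(R+IT))$, and Lemma \ref{16} produces infinitely many of them. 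Since $K\cdot T=S^{-1}A$, the conclusion $\deg(f)=1$ finishes everything at once, with no separate treatment of the non-integral part of $A$. You instead route through the integral closure $\tilde R$: the decomposition theorem of Section \ref{66}, then Theorem \ref{111} plus Theorem \ref{31} to extract module-finiteness of $\tilde R$ from the nonzero conductor, a determinant-trick argument to force $\tilde R\subseteq K$, and finally Theorem \ref{901} to handle $\tilde R\subsetneq A$. These steps all check out (in particular $\tilde R$ is integrally closed in $A$ by transitivity of integrality, so Theorem \ref{901} applies, and $(\tilde R_{\m})_{\pa}\subseteq K$ because $\tilde R_{\m}$ is a domain with fraction field $K$). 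What the paper's approach buys is brevity and self-containedness --- only Theorem \ref{3} and Lemma \ref{16} are needed; what yours buys is an explicit diagnosis of why the naive family $R+\mathfrak{a}A$ fails without module-finiteness and a demonstration that the imported FIP machinery can substitute for the paper's ad hoc monic-subring trick.
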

\begin{proof}
 Let $S=R \setminus\{0\}$. Then we have, as $A$ is torsion free, $A \subseteq S^{-1}A \cong K[x]/(f)$ for some nonzero $f \in K[x]$ (Lemma
\ref{0}iii and Theorem \ref{3},
where we note that finite domains are fields). After multiplying by elements of $S$ we may assume that $x \in A$ and $f \in R[x]$ monic. 
Division with remainder shows $K[x]f \cap R[x]=R[x]f$. This shows that we have $T=R[x]/(f) \subseteq A$. We will show that
$\mathrm{deg}(f)=1$. Consider the $R$-subalgebras $R+IT$ where $I$ is an ideal of $R$. If $\mathrm{deg}(f)>1$, one easily gets
$I=\mathrm{Ann}_R(T/(R+IT))$. This gives infinitely many $R$-subalgebras by Lemma \ref{16}, which contradicts the futility. Hence
$\mathrm{deg}(f)=1$ and $R \subseteq A \subseteq K$.
\end{proof}

Notice that the converse of the above lemma is false: the ring $\Q$ for example has infinitely many $\Z$-subalgebras. We have the following lemma.

\begin{corollary}
 Let $R$ be a domain that is not a field. Let $A$ be a futile $R$-algebra. Then $A \otimes_R Q(R)=0$ or $A \otimes_R Q(R)=Q(R)$. 
\end{corollary}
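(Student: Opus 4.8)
The plan is to reduce everything to the torsion-free situation already settled by Lemma \ref{18}. Set $K=Q(R)$ and $S=R\setminus\{0\}$. First I would isolate the torsion: since $R$ maps into the center $Z(A)$, the torsion submodule $A_{R\textrm{-tor}}$ is actually a two-sided ideal of $A$. Indeed, if $ra=0$ for some $r\in S$ and $b\in A$, then $r(ab)=(ra)b=0$ and $r(ba)=b(ra)=0$ because $r$ is central, so $ab,ba\in A_{R\textrm{-tor}}$. Consequently $B:=A/A_{R\textrm{-tor}}$ is again a futile $R$-algebra by Lemma \ref{0}ib, and it is torsion-free as an $R$-module by construction.

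Next I would pass to $K$. Tensoring the short exact sequence $0\to A_{R\textrm{-tor}}\to A\to B\to 0$ with the flat $R$-module $K$ makes the torsion term vanish (a torsion module becomes $0$ after tensoring with the fraction field), giving an isomorphism $A\otimes_R K\cong B\otimes_R K$. So it suffices to compute $B\otimes_R K=S^{-1}B$. If $B=0$, then $A\otimes_R K=0$ and we are in the first case. Otherwise $B$ is a nonzero torsion-free futile $R$-algebra over a domain $R$ that is not a field, so Lemma \ref{18} applies and yields $R\subseteq B\subseteq K$.

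Finally, from $R\subseteq B\subseteq K=S^{-1}R$ I would localize: applying $S^{-1}(-)$ to these inclusions gives $K=S^{-1}R\subseteq S^{-1}B\subseteq S^{-1}K=K$, whence $B\otimes_R K=S^{-1}B=K$ and therefore $A\otimes_R K\cong K$. This exhausts the two cases and proves the corollary.

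The only step that genuinely needs attention is the very first one, namely checking that $A_{R\textrm{-tor}}$ is a two-sided ideal so that the quotient $B$ inherits futility; this is precisely where the hypothesis $\varphi(R)\subseteq Z(A)$ is used, and it is otherwise immediate. Everything after that is a formal flatness-plus-localization computation, and the substantive input is entirely contained in Lemma \ref{18}.
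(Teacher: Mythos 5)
Your proof is correct and follows essentially the same route as the paper: tensor the exact sequence $0 \to A_{R\textrm{-tor}} \to A \to A/A_{R\textrm{-tor}} \to 0$ with $Q(R)$ to reduce to the torsion-free quotient, then invoke Lemma \ref{0}ib and Lemma \ref{18}. You simply spell out the details the paper leaves implicit, notably that $A_{R\textrm{-tor}}$ is a two-sided ideal via centrality of $\varphi(R)$.
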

\begin{proof}
 Consider the exact sequence $0 \to A_{R\textrm{-tor}} \to A \to A/A_{R\textrm{-tor}} \to 0$ and tensor with $Q(R)$ over $R$. We get an isomorphism
$A
\otimes_R Q(R) \cong A/A_{R\textrm{-tor}} \otimes_R Q(R)$. Then apply Lemma \ref{18} and Lemma \ref{0}ib. 
\end{proof}

\begin{lemma} \label{19}
Let $R$ be a principal ideal domain. Then an $R$-subalgebra $A$ of $Q(R)$ is $R$-futile if and only if $A=R[\frac{1}{r}]$ for some
$r \in R \setminus \{0\}$. 
\end{lemma}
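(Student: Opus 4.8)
The plan is to describe all $R$-subalgebras $B$ with $R \subseteq B \subseteq K$, where $K = Q(R)$, and to show that they are exactly the localizations of $R$ at sets of primes. The crucial structural step is the following. Since $R$ is a PID, it is a unique factorization domain, so every $\beta \in K$ can be written as $\beta = a/b$ with $a,b \in R$ coprime. I claim that for such $\beta$ one has $R[\beta] = R[1/b]$: the inclusion $\subseteq$ is clear since $\beta = a\cdot(1/b)$, and for $\supseteq$ I choose by B\'ezout $x,y \in R$ with $xa+yb=1$, whence $1/b = x\beta + y \in R[\beta]$. More generally, if $B$ is any $R$-subalgebra of $K$ and $a/b \in B$ is in lowest terms, then $1/b \in B$, and for every prime $p \mid b$ one gets $1/p = (b/p)(1/b) \in B$. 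Writing $S_B$ for the set of primes $p$ (up to associates) with $1/p \in B$, this shows $B = R[1/p : p \in S_B]$. Conversely, for a set $S$ of primes, unique factorization shows that $1/p \in R[1/q : q \in S]$ if and only if $p \in S$, so $S \mapsto R[1/p : p \in S]$ is an inclusion-preserving bijection from sets of primes of $R$ onto the $R$-subalgebras of $K$, with inverse $B \mapsto S_B$.

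Granting this, the forward implication is quick. Suppose $A$ is $R$-futile. By Lemma \ref{1} I may write $A = \bigcup_{i=1}^n R[\alpha_i]$ for finitely many $\alpha_i \in A \subseteq K$. Writing $\alpha_i = a_i/b_i$ in lowest terms, the structural step gives $R[\alpha_i] = R[1/b_i]$, so each $1/b_i$ lies in $A$ and hence $R[1/r] \subseteq A$ for $r = b_1\cdots b_n$. On the other hand each $R[1/b_i] \subseteq R[1/r]$, so $A = \bigcup_i R[1/b_i] \subseteq R[1/r]$. Therefore $A = R[1/r]$.

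For the reverse implication, let $A = R[1/r]$ with $r \in R \setminus \{0\}$. Let $S$ be the finite set of primes dividing $r$, so that $A = R[1/p : p \in S]$. By the bijection above every $R$-subalgebra $B$ of $A$ has the form $R[1/p : p \in S']$ for a subset $S' \subseteq S$ (indeed $S' = S_B \subseteq S$, since $1/p \in A$ forces $p \in S$). As $S$ is finite there are only $2^{|S|}$ such subsets, hence only finitely many $R$-subalgebras, and $A$ is $R$-futile.

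The main obstacle is the structural step, namely proving that every intermediate ring $R \subseteq B \subseteq K$ is generated over $R$ by the inverses of a set of primes; this is where the PID (unique factorization) hypothesis is genuinely used, through the lowest-terms representation and the B\'ezout identity that produces $1/b$ from $a/b$. Once that lattice description is in place, both directions reduce to the observation that a power set is finite exactly when its underlying set is.
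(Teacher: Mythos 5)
Your proof is correct and takes essentially the same route as the paper, whose entire argument is to invoke (as an ``easy exercise'') the same inclusion-preserving bijection between $R$-subalgebras of $Q(R)$ and subsets of the set of nonzero primes of $R$, from which the statement follows at once. You have simply written out the details of that bijection (lowest terms plus B\'ezout), so nothing further is needed.
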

\begin{proof}
It is an easy exercise to show that there is a bijection between the set of $R$-subalgebras of $Q(R)$ and the powerset of $\mathrm{Spec}(R) \setminus \{0\}$ given by $A \mapsto  \{ \pa=(p): \frac{1}{p} \in A\}$. The result above then
follows easily. 
\end{proof}

\begin{theorem} \label{21}
Let $R$ be a principal ideal domain, not a field, such that the residue fields for all nonzero primes are finite. Then an $R$-algebra $A$ is a
futile $R$-algebra if one of the following holds:
\begin{itemize}
\item $A$ is finite;
\item $A_{R\textrm{-}\mathrm{tor}}$ is finite and $A/A_{R\textrm{-}\mathrm{tor}} \cong R[1/r] \subseteq Q(R)$ for some $r \in R \setminus \{0\}$.
\end{itemize}
\end{theorem}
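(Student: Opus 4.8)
The plan is to prove the equivalence; the backward direction ("if") is the easy one. First I would record that $A_{R\textrm{-tor}}$ is always a two-sided ideal of $A$: if $ra=0$ with $r\in R\setminus\{0\}$ and $b\in A$, then $r(ab)=(ra)b=0$ and $r(ba)=b(ra)=0$ since $\varphi(r)$ is central, so $ab,ba\in A_{R\textrm{-tor}}$. With this in hand, the backward direction is immediate. If $A$ is finite then $\#(A/\varphi(R))<\infty$, so $A$ is futile by the lemma that an algebra of finite index over $R$ is futile. If instead $A_{R\textrm{-tor}}$ is finite and $A/A_{R\textrm{-tor}}\cong R[1/r]$, then $A_{R\textrm{-tor}}$ is a two-sided ideal of finite order, so by Lemma \ref{17} it suffices that $A/A_{R\textrm{-tor}}\cong R[1/r]$ be $R$-futile, which is exactly Lemma \ref{19}.

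For the forward direction, suppose $A$ is $R$-futile and write $K=Q(R)$. The whole argument hinges on a single claim, which I would isolate and prove first: $A_{R\textrm{-tor}}$ is finite. Granting this, the corollary following Lemma \ref{18} gives $A\otimes_R K\in\{0,K\}$. If $A\otimes_R K=0$ then every element of $A$ is torsion, so $A=A_{R\textrm{-tor}}$ is finite and we land in the first case. If $A\otimes_R K=K$, then $A/A_{R\textrm{-tor}}$ is a nonzero torsion-free futile $R$-algebra (Lemma \ref{0}ib), so Lemma \ref{18} places it between $R$ and $K$ and Lemma \ref{19} identifies it as $R[1/r]$; together with the finiteness of $A_{R\textrm{-tor}}$ this is the second case.

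The main obstacle is the claim that $A_{R\textrm{-tor}}$ is finite, and here I would reduce to a commutative Noetherian situation. By Lemma \ref{9} the commutator ideal $[A,A]$ is finite; since $R$ is an infinite domain (Lemma \ref{16}), every element of a finite $R$-module is torsion, so $[A,A]\subseteq A_{R\textrm{-tor}}$. Consequently there is a surjection $A_{R\textrm{-tor}}\to (A/[A,A])_{R\textrm{-tor}}$ with finite kernel $[A,A]$, and it suffices to prove $(A/[A,A])_{R\textrm{-tor}}$ is finite. Replacing $A$ by the commutative algebra $\bar A=A/[A,A]$, which is finitely generated as an $R$-algebra by Lemma \ref{1} and hence Noetherian over the PID $R$, I would argue as follows. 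The torsion ideal $N=\bar A_{R\textrm{-tor}}$ is finitely generated as an ideal, say by torsion elements $t_1,\ldots,t_k$ with $s_jt_j=0$; then $s=\prod_j s_j\neq 0$ annihilates $N$. Now $\bar A/s\bar A$ is an $R/(s)$-futile algebra by Lemma \ref{0}ic, and $R/(s)$ is a finite ring, so $\bar A/s\bar A$ is finite by Theorem \ref{31}. Since $N$ is a finitely generated module over the finite ring $\bar A/s\bar A$, it is finite, which proves the claim.

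I expect the two delicate points to be exactly where the hypotheses are consumed: verifying that $N$ is killed by a single nonzero $s$ (this is precisely where finite generation as an ideal, i.e.\ Noetherianity of $\bar A$, is needed), and verifying that $R/(s)$ is finite (this is where the finite-residue-field hypothesis enters, via the primary decomposition $(s)=\prod_i\pa_i^{e_i}$, the Chinese remainder isomorphism $R/(s)\cong\prod_i R/\pa_i^{e_i}$, and the finite filtration of each $R/\pa_i^{e_i}$ with successive quotients $R/\pa_i$). Everything else is assembling the already-proved lemmas.
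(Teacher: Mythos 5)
Your proposal is correct, and it assembles the easy parts from the same lemmas the paper uses (Lemma \ref{17} together with Lemma \ref{19} for the backward direction, Lemma \ref{18} and Lemma \ref{19} for identifying the torsion-free quotient), but you reach the one nontrivial point --- the finiteness of $A_{R\textrm{-tor}}$ --- by a genuinely different route. The paper first splits off the case $\ker(R\to A)\neq 0$, where $A$ is futile over the finite ring $R/I$ and hence finite by Theorem \ref{31}; in the faithful case it exploits futility directly: the subalgebras $B_r=R\oplus A[r]$, with $A[r]=\{a\in A: ra=0\}$, can take only finitely many values, so $A_{R\textrm{-tor}}=A[r]$ for a single $r$, and then $B_r/rB_r$ is futile over the finite ring $R/rR$ and hence finite. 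You instead pass to the commutative quotient $A/[A,A]$ (using Lemma \ref{9} and the observation that the finite ideal $[A,A]$ is torsion because $R$ is infinite), invoke Noetherianity of this finitely generated commutative $R$-algebra to extract a single annihilator $s$ of the torsion ideal, and then run the same ``futile over the finite ring $R/(s)$'' argument; both proofs consume the finite-residue-field hypothesis at exactly this spot. Your version costs an appeal to the Hilbert basis theorem but treats the non-faithful case uniformly via the corollary on $A\otimes_R Q(R)$ and makes the reduction to the commutative setting explicit, whereas the paper's $B_r$ trick squeezes the single annihilator directly out of futility with no Noetherian input. The only point worth spelling out in your write-up is that the generators $t_1,\dots,t_k$ of the ideal $N$ may indeed be chosen inside $N$ itself, so that each $t_j$ is genuinely a torsion element; this is standard for ideals of Noetherian rings but is the hinge on which your single annihilator $s$ turns.
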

\begin{proof}
$\implies$: Let $I=\mathrm{Ker}(R \to A)$. If $I \neq 0$, then $A$ is a futile $R/I$-algebra where $R/I$ is finite. By Theorem \ref{2} we conclude
that $A$ is
finite.
Suppose that $I=0$. We will first show that $A_{R\textrm{-}\mathrm{tor}}$ is finite. Indeed, for all $r \in R$ we have an
ideal $A[r]=\{a \in A: ra=0\}$. As $R \cap A[r]=0$, we see that $B_r:=R+A[r]=R \oplus A[r]$ is a subring of $A$. As $\left(B_r\right)_{R\textrm{-}\mathrm{tor}}=A[r]$, by futility there is $r \in R$ such that $A_{R\textrm{-}\mathrm{tor}}=A[r]$. For such an $r$ consider $B_r/rB_r= R/rR \oplus A_{R\textrm{-}\mathrm{tor}}$. This ring is
a futile $R/rR$-algebra (Lemma \ref{0}ic) and by Theorem \ref{2} it is finite. Hence $A_{R\textrm{-}\mathrm{tor}}$ is finite. We know that $A/A_{R\textrm{-}\mathrm{tor}}$ is torsion free
and is a futile $R$-algebra. By Lemma \ref{18} and Lemma \ref{19} we have $A/A_{R\textrm{-}\mathrm{tor}} \cong R[1/r]$ for some $r \in R$.

$\limplies$: If $A$ is finite, then it obviously is a futile $R$-algebra. For the other part, use Lemma \ref{17} and Lemma \ref{19}. 
\end{proof}

\begin{remark}
Let $p$ be a prime number. The above theorem holds for example for $R=\Z, \Z_{p}, \Z_{(p)}$.
\end{remark}


\begin{thebibliography}{1}

\bibitem{DO4}
{\sc Anderson, D.~D., Dobbs, D.~E., and Mullins, B.}
\newblock The primitive element theorem for commutative algebras.
\newblock {\em Houston J. Math. 25}, 4 (1999), 603--623.

\bibitem{AT}
{\sc Atiyah, M.~F., and Macdonald, I.~G.}
\newblock {\em Introduction to commutative algebra}.
\newblock Addison-Wesley Publishing Co., Reading, Mass.-London-Don Mills, Ont.,
  1969.

\bibitem{DO1}
{\sc Dobbs, D.~E., Mullins, B., Picavet, G., and Picavet-L'Hermitte, M.}
\newblock On the {FIP} property for extensions of commutative rings.
\newblock {\em Comm. Algebra 33}, 9 (2005), 3091--3119.

\bibitem{DO5}
{\sc Dobbs, D.~E., Picavet, G., and Picavet-L'Hermitte, M.}
\newblock Characterizing the ring extensions that satisfy {FIP} or {FCP}.
\newblock {\em J. Algebra 371\/} (2012), 391--429.

\bibitem{EI}
{\sc Eisenbud, D.}
\newblock {\em Commutative algebra}, vol.~150 of {\em Graduate Texts in
  Mathematics}.
\newblock Springer-Verlag, New York, 1995.
\newblock With a view toward algebraic geometry.

\bibitem{HA}
{\sc Hartshorne, R.}
\newblock {\em Algebraic geometry}.
\newblock Springer-Verlag, New York, 1977.
\newblock Graduate Texts in Mathematics, No. 52.

\bibitem{LA}
{\sc Lang, S.}
\newblock {\em Algebra}, third~ed., vol.~211 of {\em Graduate Texts in
  Mathematics}.
\newblock Springer-Verlag, New York, 2002.

\bibitem{RO}
{\sc Robinson, D. J.~S.}
\newblock {\em Finiteness conditions and generalized soluble groups. {P}art 1}.
\newblock Springer-Verlag, New York, 1972.
\newblock Ergebnisse der Mathematik und ihrer Grenzgebiete, Band 62.

\end{thebibliography}
\end{document}